\numberwithin{equation}{section}
\newtheorem{theorem}{Theorem}[section]
\newtheorem{proposition}{Proposition}[section]
\newtheorem{corollary}{Corollary}[section]
\newtheorem*{conjecture}{Conjecture}
\newtheorem{lemma}{Lemma}[section]
\theoremstyle{definition}
\theoremstyle{remark}
\newtheorem{remark}[theorem]{Remark}
\DeclareMathOperator{\sgn}{\operatorname{sgn}}
\DeclareMathOperator*{\argmin}{\operatorname{argmin}}
\DeclareMathOperator{\HyperF}{F}
\newcommand{\Hypergeom}[5]{{\sideset{_#1}{_#2}\HyperF\!\left(
      \genfrac{}{}{0pt}{0}{#3}{#4}
      \middle|\,#5\right)}}
\begin{document}

\title[]{Polarization and greedy energy on the sphere}
\keywords{Spherical cap discrepancy, greedy sequences, Stolarsky principle.}
\subjclass[2010]{52A40, 52C99}

\author[]{Dmitriy Bilyk \and Michelle Mastrianni \and \\ Ryan W. Matzke \and Stefan Steinerberger
}
\address{School of Mathematics, University of Minnesota, Minneapolis, MN 55455, USA}
\email{dbilyk@umn.edu}

\address{School of Mathematics, University of Minnesota, Minneapolis, MN 55455, USA}
\email{michmast@umn.edu}

\address{Department of Mathematics, Vanderbilt University, Nashville, TN 37212, USA}
\email{ryan.w.matzke@vanderbilt.edu}

\address{Department of Mathematics, University of Washington, Seattle, WA 98195, USA}
\email{steinerb@uw.edu}

\begin{abstract} 
We investigate the behavior of  a greedy sequence on the sphere $\mathbb{S}^d$ defined so that at each step  the point that minimizes the Riesz $s$-energy is added to the existing set of points.  We show  that  for $0<s<d$, the greedy sequence achieves optimal second-order behavior for the  Riesz $s$-energy (up to constants). In order to obtain this result, we prove that the second-order term of the maximal polarization with Riesz $s$-kernels is of order $N^{s/d}$ in the same range $0<s<d$. Furthermore, using the Stolarsky principle relating the $L^2$-discrepancy of a point set with the pairwise sum of distances (Riesz energy with $s=-1$), we also obtain a simple upper bound on the $L^2$-spherical cap discrepancy of  the greedy sequence    and give numerical examples that indicate that the true discrepancy is much lower.

\end{abstract}

\maketitle

\section{Introduction and background}
We consider the classical problem of distributing points on the sphere $\mathbb{S}^d$ as regularly as possible. Any such discussion requires a notion of regularity. We recall several such notions below. 

\subsection{Energy and polarization} 
Let  $\omega_{N,d} = \left\{x_1, \dots, x_N\right\} \subset \mathbb{S}^d$ denote a set of $N$ points in the sphere. 
For a symmetric, lower semi-continuous kernel $K: \mathbb{S}^d \times \mathbb{S}^d \rightarrow (- \infty, \infty]$ and a point set $\omega_{N,d}$, we define the discrete energy as
\begin{equation}\label{eq:en}
E_K(\omega_{N,d}) = 2 \sum_{1 \leq i < j \leq N } K(x_i, x_j).
\end{equation}
Of particular importance are the Riesz $s$-kernels:
\begin{equation*}
K_s(x,y) : = \begin{cases} 
\sgn(s) \|x - y\|^{-s} & s \neq 0 \\
- \log(\| x-y\|) & s = 0.
\end{cases}
\end{equation*}
Another quantity closely related to energy is polarization, defined as
\begin{equation}\label{eq:pol}
P_K(\omega_{N,d}) = \min_{x \in \mathbb{S}^d} \sum_{j=1}^{N} K(x, x_j).
\end{equation}
For a fixed $N$, we define the minimal energy and maximal (constrained) polarization, respectively, by
\begin{equation*}
\mathcal{E}_K(N) = \min_{\omega_{N,d} \subset \mathbb{S}^d} E_K(\omega_{N,d}) \quad \textup{and} \quad \mathcal{P}_K(N) = \max_{\omega_{N,d} \subset \mathbb{S}^d} P_K(\omega_{N,d}).
\end{equation*}
Both energy (see e.g. \cite{BelMO, BetS, BraG, GraS, HarS}) and polarization (see e.g. \cite{BDHSS22, FarN, FarR, Oht, RezSVo, RezSVl, Sim}), especially in the case of  Riesz kernels, have been actively studied over the past few decades: we  refer the reader to the excellent recent book \cite{BHS} for a comprehensive exposition.
The second-order asymptotic behavior for the optimal Riesz $s$-energy is well understood (see the discussion in Section \ref{sec:lower}) for $-2<s<d$.  However, results for second-order asymptotics of maximal polarization with Riesz $s$-kernels have not been obtained before (except when $d=1$). We prove these optimal bounds in Theorem \ref{thm:Singular Riesz Polarization} for the  singular potential-theoretic range $0<s<d$ and then use this result to demonstrate that, in the same range, the greedy sequence obtains optimal second-order asymptotic behavior (up to constants) for Riesz $s$-energy (Theorem \ref{thm:greedyenergy}) and, for infinitely many $N$, polarization (Corollary \ref{cor:pol}).

In this paper we only consider the range $-2 < s < d$. This is natural  for the Riesz $s$-energy on the $d$-dimensional manifold $\mathbb S^d$, since for $s<-2$ minimizing energy no longer leads to uniform distribution (indeed, optimal configurations are achieved by placing half of the points at each of  two opposite poles \cite{Bj}), while for $s>d$ the kernels $K_s$ are hypersingular. 

\subsection{Spherical cap discrepancy}  
For any $w \in \mathbb{S}^d$ and any $t \in (-1,1)$, we define
$$ C(w,t) = \left\{x \in \mathbb{S}^d: \left\langle w, x \right\rangle \geq t \right\} \subset \mathbb{S}^d$$
to be a spherical cap. The spherical cap discrepancy of a set $\omega_{N,d} = \left\{x_1, \dots, x_N\right\} \subset \mathbb{S}^d$ is then defined as
$$ D(\omega_{N,d}) = \sup_{w \in \mathbb{S}^d} \sup_{-1 < t < 1} \left| \frac{1}{N} \sum_{i=1}^{N} \chi_{C(w,t)}(x_i) - \sigma_d(C(w,t)) \right|,$$
where $\sigma_d$ is the  surface measure on $\mathbb{S}^d$ normalized so that $\sigma_d(\mathbb{S}^d) = 1$.
A fundamental result of Beck \cite{beck1, beck2} 
states that 
\begin{equation}\label{eq:beck}
N^{-\frac{1}{2}-\frac{1}{2d}} \lesssim \inf_{\omega_{N,d}} D(\omega_{N,d}) \lesssim N^{-\frac{1}{2}-\frac{1}{2d}} \sqrt{\log{N}},
\end{equation}
where the implicit constants depend on the dimension $d$. The upper bound  in \eqref{eq:beck}  follows from a probabilistic construction (jittered sampling). There are  many probabilistic  \cite{ABD, AZ, ABS, bellhouse, BE, BBL, BRSSWW, FHM} and deterministic   \cite{ABD,etayo, fer,FHM,HEAL, kor,alex, marzo, narco,s} constructions that have been investigated. Most of these explicit point sets have only been proven to have discrepancy of the order no better than  $N^{-1/2}$, although there is numerical evidence that some of them might be almost optimal. 
Instead of the spherical cap discrepancy, we will in this paper work with the $L^2-$spherical cap discrepancy
\begin{equation}\label{eq:L2cap}
 D_{L^2, \tiny \mbox{cap}}^2(\omega_{N,d}) = \int_{-1}^1 \int_{\mathbb{S}^d}  \left| \frac{1}{N} \sum_{i=1}^{N} \chi_{C(w,t)}(x_i) - \sigma_d(C(w,t)) \right|^2 d\sigma_d(w)  dt.
 \end{equation}
The $L^2$-spherical cap discrepancy is smaller than the spherical cap discrepancy though the difference is, for many examples, at most logarithmic. The $L^2$-discrepancy is connected to energy, particularly to the Riesz energy with $s=-1$ (sum of distances)  via the Stolarsky invariance principle \cite{stol}, which we state in \S \ref{sec:thm1}.

\section{Greedy sequences and main results}\label{s:main}

One of the main purposes of this paper is to discuss a simple greedy construction of sequences that turns out to simultaneously 
\begin{enumerate}[(i)]
\item achieve optimal Riesz $s$-energy, up to the second-order behavior,  when  $0 < s < d$, 
\item satisfy good distribution properties in the sense of $ D_{L^2, \tiny \mbox{cap}}^2$.
\end{enumerate}
Greedy sequences  have been studied in the context of energy \cite{brown, HarRSV, Lop, LM21, LopM, LopS, LopW, Pri1, Pri2,  SafT, Sic, wolf} and discrepancy \cite{brown, kritzinger, stein1, stein2, stein3, stein4}. 
It is well understood that greedy sequences on $\mathbb{S}^1$ tend to be fairly structured \cite{P20}, but this cannot be expected in higher dimensions. However, we show that greedy sequences on $\mathbb{S}^d$ exhibit an unusual amount of regularity: they produce nearly optimal Riesz energy for $0<s<d$ and their $L^2$-spherical cap discrepancy  satisfies favorable bounds (while numerics in \S \ref{sec:numerics} suggests they might even be optimal).

Generally, constructing a {\emph{sequence}} with good discrepancy or energy properties is significantly harder than constructing point sets for arbitrary $N$ since one is forced to keep previously chosen points.  In some situations (e.g.  discrepancy with respect to axis-parallel boxes on the torus \cite{KN, Pil}) even the optimal discrepancy bounds are slightly different for sequences and point sets. Most known constructions  of good point distributions on the sphere are not sequences. This makes the greedy {\emph{sequence}} even more valuable.

\subsection{Construction of the greedy sequence} \label{subsec:construction} We will work with an infinite sequence $(x_n)_{n=1}^{\infty}$ instead of a single set. This is more difficult since points, once placed, remain fixed.  We start with a completely arbitrary initial set of points $\left\{x_1, \dots, x_m \right\} \subset \mathbb{S}^d$. The construction is then greedy: at each step we add the point that minimizes the Riesz $s$-energy with respect to the existing set of points.  More formally, given a set $\omega_{N,d} = \left\{x_1, \dots, x_N\right\} \subset \mathbb{S}^d$, we pick
\begin{equation}\label{eq:greedy}
 x_{N+1} = \argmin_{x \in \mathbb{S}^d} \sum_{i=1}^{n} K_s(x, x_i) = \argmin_{x\in \mathbb{S}^d} E_{K_s} \big( \omega_{N,d}  \cup \{x\}\big). 
 \end{equation}
 In other words, $x_{N+1}$ is just the point where the polarization is achieved for the set $\{x_1,\dots, x_N\}$, i.e.  $$  \sum_{j=1}^{N} K_s(x_{N+1}, x_j)  = P_{K_s} \big( \{x_1,\dots, x_N\} \big).  $$
If the minimum is not assumed at a unique point, then any such point may be chosen.
Note that if $s = -1$, this construction is equivalent to maximizing the  sum of (Euclidean) distances to the points in the existing set.  When $s=0$,  i.e. $K_0(x,y) = - \log(\|x-y\|)$, {and $d = 1$ (or more generally, on any compact $\Omega \subset \mathbb{C}$)}, such sequences are called Leja sequences  in recognition of the work of Leja \cite{Lej} although they were introduced by Edrei \cite{Erd} earlier. Simultaneously, G\'{o}rski studied the case of $s = -1$ (the Newtonian kernel) for compact sets $\Omega \subset \mathbb{R}^3$ \cite{Gor}. For $s = d-2$ (i.e. the Green kernel) and compact $\Omega \subset \mathbb{R}^d$, the corresponding greedy sequences are known as Leja--G\'{o}rski sequences, which have been studied in \cite{Got01, Pri2}.   Leja points have applications in Stochastic Analysis \cite{NarJ} and Approximation/Interpolation Theory \cite{BiaCC, CalVM11, CalVM12, Chk, DeM, JanWZ, Rei, TayT}, and various numerical methods have been developed to approximate  such points (see, e.g. \cite{BagCR, BosMSV, CorD}).

As a general remark on the behavior of greedy sequences in any dimension, we note that  greedy sequences defined above (when initialized with a single point) have the property that every other point in the sequence is the antipodal point to the last one placed. This is a generalization of \cite[Theorem 2.1]{LopM}, the proof is identical. 

\begin{proposition}\label{prop:Greedy Symmetry}
Let $s \in \mathbb{R}$. Let $(x_n)_{n=1}^{\infty}$ be a greedy sequence on $\mathbb{S}^d$ for some initial point $x_1 \in \mathbb{S}^d$. Then for all $k \in \mathbb{N}$, $x_{2k} = - x_{2k-1}$.
\end{proposition}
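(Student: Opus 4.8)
The plan is to argue by induction on $k$, the central observation being that once an even number $2k$ of points has been placed, the configuration $\{x_1,\dots,x_{2k}\}$ is invariant under the antipodal map $x\mapsto -x$, which in turn forces the potential minimized at the next step to be an even function on $\mathbb{S}^d$. I would begin by recording the elementary monotonicity of the Riesz kernels: for every $s\in\mathbb{R}$ the quantity $K_s(x,y)$ is a function of $r=\|x-y\|$ alone and is \emph{strictly decreasing} in $r$ on $(0,2]$ (it equals $r^{-s}$ for $s>0$, $-r^{-s}$ for $s<0$, and $-\log r$ for $s=0$). Hence for any fixed $p\in\mathbb{S}^d$ the single-point potential $x\mapsto K_s(x,p)$ attains its minimum over $\mathbb{S}^d$ at the \emph{unique} point $x=-p$, where $\|x-p\|=2$ is maximal. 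This already handles the base case: since $x_2=\argmin_{x\in\mathbb{S}^d}K_s(x,x_1)$, we get $x_2=-x_1$. I would also note the antipodal invariance $K_s(-x,-y)=K_s(x,y)$, immediate from $\|-x-(-y)\|=\|x-y\|$.

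For the inductive step, assume $x_{2j}=-x_{2j-1}$ for all $j\le k$, so that $\{x_1,\dots,x_{2k}\}$ splits into the antipodal pairs $\{x_{2j-1},-x_{2j-1}\}$ and is therefore invariant under $x\mapsto -x$. Writing $f(x)=\sum_{i=1}^{2k}K_s(x,x_i)$, the antipodal invariance of the set together with $K_s(-x,-y)=K_s(x,y)$ gives $f(-x)=\sum_{i}K_s(x,-x_i)=\sum_i K_s(x,x_i)=f(x)$, so $f$ is even. The point $x_{2k+1}$, which I write as $p$, is by construction a minimizer of $f$, and evenness then yields $f(-p)=f(p)\le f(x)$ for every $x$.

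It remains to locate $x_{2k+2}$, which minimizes $g(x)=f(x)+K_s(x,p)$. For each $x\in\mathbb{S}^d$ we have simultaneously $f(x)\ge f(p)=f(-p)$ and $K_s(x,p)\ge K_s(-p,p)$, the latter from the monotonicity above with equality if and only if $x=-p$; summing these two bounds gives $g(x)\ge g(-p)$, with equality forcing $x=-p$. Thus $-p=-x_{2k+1}$ is the \emph{unique} minimizer of $g$, so $x_{2k+2}=-x_{2k+1}$, which closes the induction.

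I do not anticipate a serious obstacle. The one point deserving care is that the strict monotonicity of $K_s$ in the distance makes the minimizer at each stage unique, so the freedom left by the construction (``any minimizer may be chosen'') never actually arises; the same strictness guarantees that the relevant potentials are finite at $\pm p$, and for $s\ge 0$ the singularity of $K_s$ even forces the greedy points to be pairwise distinct (a minimizer of a potential can never coincide with an existing point, where the potential is $+\infty$). Beyond this, the remaining bookkeeping is simply verifying that antipodal symmetry is correctly propagated from one even stage to the next, which is exactly the induction above.
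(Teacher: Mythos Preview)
Your argument is correct and matches the intended approach: induction on $k$, using that after an even number of steps the configuration is antipodally symmetric, hence the potential $f$ is even, and then the strict monotonicity of $K_s$ in the distance forces the unique minimizer of $g(x)=f(x)+K_s(x,p)$ to be $-p$. This is precisely the mechanism behind the proof referenced in the paper (which is identical to \cite[Theorem 2.1]{LopM}) and also explains the paper's remark that the result extends to any kernel $K(x,y)=f(\|x-y\|)$ with $f$ strictly decreasing.

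One small correction to your closing commentary: strict monotonicity does \emph{not} make the minimizer unique at every stage. At odd stages (choosing $x_{2k+1}$) the minimizer of $f$ may well be non-unique---for instance, after $x_1$ and $x_2=-x_1$ on $\mathbb{S}^d$ with $d\ge 2$, the potential $K_s(\cdot,x_1)+K_s(\cdot,-x_1)$ is constant on the equator, so there is a continuum of minimizers. What your proof actually shows, and all that is needed, is that once \emph{any} such $x_{2k+1}=p$ has been chosen, the minimizer of $g$ at the next (even) step is uniquely $-p$. The ``freedom'' in the construction therefore does arise at odd steps, but the conclusion $x_{2k}=-x_{2k-1}$ holds regardless of how that freedom is exercised.
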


More generally, this result holds if we replace $K_s$ with any kernel $K(x,y) = f(\|x-y\|)$ with $f$ strictly decreasing.

\subsection{Optimal second-order polarization estimates}

For $s<d$, we note that the kernel $K_s(x,y)$ is integrable on $\mathbb S^d$ in each variable and depends only on the distance between $x$ and $y$. This suggests defining the constant
\begin{equation}\label{eq:Continuous Min Energy}
 \mathcal{I}_{s,d} = \int_{\mathbb{S}^d} \int_{\mathbb{S}^d} K_s (x,y) d \sigma_d(x) d\sigma_d(y) = \int_{\mathbb{S}^d} K_s(x,y) d\sigma_d(y) , \quad \forall~x \in \mathbb{S}^{d}. \quad
\end{equation}

with $\sigma_d$ denoting, as before, the normalized uniform measure on $\mathbb{S}^d$. Observe that the constant $\mathcal{I}_{s,d} $ is negative for   $s<0$.

One would expect that polarization is maximized when the points are distributed as uniformly  as possible. In that case, one could replace summation by integration in \eqref{eq:pol} and expect the maximal Riesz polarization $\mathcal{P}_{K_s}$ to behave roughly as $\mathcal{I}_{s,d}  N$, and this is indeed correct, see e.g. \cite{LopS}. 
We prove more delicate polarization estimates with optimal (up to constants) second-order terms for $0<s<d$. 

\begin{theorem}\label{thm:Singular Riesz Polarization}
For $-2 < s < d$, there exist positive constants $b_{s,d}, c_{s,d}$ such that the following hold for all $N \in \mathbb{N}$.

\noindent For $0 < s <d$,
\begin{equation}\label{eq:opt_polarization}
-b_{s,d} N^{\frac{s}{d}} \leq   \mathcal{P}_{K_s}(N)  -  \mathcal{I}_{s,d} N  \leq  -c_{s,d} N^{\frac{s}{d}}.
\end{equation}
For $-2 < s < 0$,
\begin{equation}\label{eq:opt_polarization1}
\mathcal{I}_{s,d}   + b_{s,d} N^{\frac{s}{d}} \leq  \mathcal{P}_{K_s}(N)  -  \mathcal{I}_{s,d} N  \leq    -c_{s,d} N^{\frac{s}{d}}  .
\end{equation}
For $s=0$, 
\begin{equation}\label{eq:opt_polarization2}
- \frac{\log N}{d} - b_{0,d}   \leq    \mathcal{P}_{K_0}(N)  -  \mathcal I_{0,d} N  \leq  - c_{0,d}. 
\end{equation}
\end{theorem}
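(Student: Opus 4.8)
\emph{Overall strategy.} The plan is to separate the leading term from the second-order term. Averaging the potential $U_\omega(x):=\sum_{j=1}^N K_s(x,x_j)$ against $\sigma_d$ gives, for every $\omega=\{x_1,\dots,x_N\}$, the identity $\int_{\mathbb S^d}U_\omega\,d\sigma_d=\mathcal I_{s,d}N$, and since a minimum never exceeds an average, $\mathcal P_{K_s}(N)\le \mathcal I_{s,d}N$. Thus the entire content of the theorem is the size and sign of the second-order term, and I would establish the two inequalities by genuinely different mechanisms: the lower bound via comparison with minimal energy, and the upper bound via a local ``hole'' analysis of $U_\omega$.

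\emph{Lower bound.} I would use the elementary link between polarization and energy: taking $\omega$ to be an energy-minimizing $N$-point set and adjoining the polarization-optimal point shows
\[
\mathcal E_{K_s}(N+1)\le \mathcal E_{K_s}(N)+2\,\mathcal P_{K_s}(N),
\]
so that $\mathcal P_{K_s}(N)\ge\tfrac12\big(\mathcal E_{K_s}(N+1)-\mathcal E_{K_s}(N)\big)$. Inserting the known second-order asymptotics of the minimal Riesz energy for $-2<s<d$ (whose $N^{1+s/d}$-coefficient is negative for $0<s<d$, and which becomes $-\tfrac1d N\log N$ at $s=0$) and differencing yields a lower bound of exactly the stated form $-b_{s,d}N^{s/d}$, respectively $-\tfrac{\log N}{d}-b_{0,d}$ and $\mathcal I_{s,d}+b_{s,d}N^{s/d}$; in the range $-2<s<0$ the constant $\mathcal I_{s,d}$ emerges from the telescoping leading term $\mathcal I_{s,d}(2N+1)$, whose half dominates the vanishing $N^{s/d}$ correction.

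\emph{Upper bound (the crux).} Here I would exploit the singularity of $K_s$. Fix $r\asymp N^{-1/d}$, set $\tilde H=\bigcup_j B(x_j,r)$ and $H=\mathbb S^d\setminus\tilde H$, and bound the minimum by the average over $H$:
\[
\mathcal P_{K_s}(N)\le \frac{1}{\sigma_d(H)}\int_H U_\omega\,d\sigma_d=\mathcal I_{s,d}N-\frac{1}{\sigma_d(H)}\int_{\tilde H}\big(U_\omega-\mathcal I_{s,d}N\big)\,d\sigma_d.
\]
Since $\sigma_d(H)\le 1$, it suffices to prove $\int_{\tilde H}(U_\omega-\mathcal I_{s,d}N)\,d\sigma_d\gtrsim N^{s/d}$. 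The source of this deficit is the self-energy: each ball contributes $\int_{B(x_j,r)}\|x-x_j\|^{-s}\,d\sigma_d(x)\asymp r^{d-s}$, so the self-terms alone give $\asymp N r^{d-s}\asymp N^{s/d}$, while the cross-terms together with the mass $\mathcal I_{s,d}N\,\sigma_d(\tilde H)$ combine into $\sigma_d(B(x_j,r))\big(E_{K_s}(\omega)-\mathcal I_{s,d}N^2\big)$, which is bounded below by the energy estimate $E_{K_s}(\omega)\ge\mathcal E_{K_s}(N)\ge \mathcal I_{s,d}N^2-CN^{1+s/d}$. Choosing $r=\varepsilon N^{-1/d}$ with $\varepsilon$ small makes the self-term $(\sim\varepsilon^{d-s}N^{s/d})$ dominate the correction $(\sim\varepsilon^{d}N^{s/d})$, giving the deficit and hence $\mathcal P_{K_s}(N)\le \mathcal I_{s,d}N-c_{s,d}N^{s/d}$.

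\emph{Main obstacle and boundary cases.} The hard part is making this deficit estimate uniform in $s$ and valid for \emph{arbitrary} (not necessarily separated) configurations. When the balls $B(x_j,r)$ overlap, the splitting into self- and cross-terms breaks, so I would first reduce to $\asymp N^{-1/d}$-separated sets—either by a regularity lemma for polarization maximizers or by arguing that a cluster forces a large empty cap elsewhere and hence a deficit directly. Even for separated sets, replacing $\int_{B(x_j,r)}\|x-x_k\|^{-s}$ by $\sigma_d(B(x_j,r))\|x_j-x_k\|^{-s}$ leaves a Hessian error whose naive term-by-term bound degrades for $0<s<d-2$; the fix is to use the exact cancellation $\int_{\mathbb S^d}\Delta_x\|x-y\|^{-s}\,d\sigma_d(x)=0$ (so that the leading $N^2$ contribution vanishes) rather than estimating magnitudes, and I expect this to be the most delicate step. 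Finally, the endpoint cases run through the same scheme: at $s=0$ the self-energy integral $\int_0^r t^{-1}\cdot t^{d-1}(-\log t)\,dt$ produces the logarithmic deficit $\tfrac1d\log N$, and for $-2<s<0$, where $K_s$ is bounded, the deficit is generated by the curvature of $K_s$ rather than by a singularity, while the averaging identity and the energy comparison carry over verbatim.
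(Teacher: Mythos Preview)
Your lower bound is essentially the paper's: both reduce to the classical inequality relating polarization to minimal energy (the paper cites $\mathcal P_K(N)\ge \mathcal E_K(N+1)/(N+1)$ from \cite{BHS}, your telescoping version is equivalent) and then plug in the known second-order Riesz energy asymptotics.

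For the upper bound the routes diverge genuinely. The paper does not do any local hole analysis; instead it bounds the $L^\infty$ deviation of the potential from its mean by the $L^2$ deviation,
\[
\mathcal I_{s,d}-\tfrac1N P_{K_s}(\omega)\;=\;\max_x\Big(\mathcal I_{s,d}-\tfrac1N\textstyle\sum_j K_s(x,x_j)\Big)\;\ge\;\Big\|\mathcal I_{s,d}-\tfrac1N\textstyle\sum_j K_s(\cdot,x_j)\Big\|_{L^2(\sigma_d)},
\]
recognizes the right-hand side as the generalized $L^2$-discrepancy $D_{L^2,f_s}(\omega)$, and invokes the lower bound $D_{L^2,f}(\omega)\gtrsim \min_{1\le n\lesssim N^{1/d}}|\widehat f(n,d)|$ from \cite{BD}. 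Since the Gegenbauer coefficients of $f_s$ are positive, decreasing, and $\asymp n^{s-d}$, this immediately gives $\gtrsim N^{(s-d)/d}\cdot N=N^{s/d}$, uniformly over \emph{all} configurations and all $-2<s<d$ (with a routine approximation $f_{s,\varepsilon}\uparrow f_s$ to handle $s\ge d/2$, where $f_s\notin L^2_{w_d}$). The spectral route completely sidesteps separation: nothing about the geometry of $\omega$ enters beyond the cardinality.

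Your geometric approach is in principle viable for $0<s<d$, but the obstacles you flag are real gaps rather than technicalities. The reduction to separated configurations is not free: polarization maximizers are not known a priori to be $N^{-1/d}$-separated, and the alternative ``cluster forces a large empty cap'' argument would itself need a quantitative potential estimate of the same strength you are trying to prove. The cross-term approximation error $\sum_{j\ne k}\int_{B(x_j,r)}(K_s(x,x_k)-K_s(x_j,x_k))$ is controlled by $r^2 E_{K_{s+2}}(\omega)$, which for arbitrary $\omega$ can be arbitrarily large (even infinite) when $s+2\ge d$, and your proposed cancellation identity $\int\Delta_x\|x-y\|^{-s}\,d\sigma_d=0$ fails precisely because of the singularity. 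For $-2<s\le 0$ the sketch is too vague: the self-term is now $-\varepsilon^{d-s}N^{s/d}$ with the wrong sign, so the positive deficit would have to come entirely from the cross/mass combination $\sigma_d(B)(E_{K_s}(\omega)-\mathcal I_{s,d}N^2)\gtrsim \varepsilon^d N^{s/d}$, and that in turn rests on the same unjustified cross-term approximation. The paper's $L^2$ argument handles all $-2<s<d$ uniformly with no case distinction beyond the $\varepsilon$-regularization.
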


The lower bounds in the Theorem above  (proved in \S \ref{sec:lower}) follow from known optimal energy estimates (see Theorem \ref{thm:Riesz Energy Asymptotics}), but the upper bounds are novel and rely on the estimates of the generalized $L^2$ discrepancy \cite{BD, BDM, BMV}, see \S\ref{sec:upper}. 

Note that even in the  previously studied case of the circle ($d=1$), our result is new when $-2<s<-1$.  This in fact leads us to the following characterization of asymptotics for polarization on the circle:
\begin{corollary}\label{cor:Polarization circle}
For $-1 \leq s < 1$, $s \neq 0$,
\begin{equation}\label{eq:Pol Circ s geq -1}
\mathcal{P}_{K_s}(N) = \mathcal{I}_{s,1} N + \frac{2 \sgn(s) (2^s-1)}{(2 \pi)^s}\zeta(s) N^s +  \mathcal{O}( N^{s-2}).
\end{equation}
For $s = 0$,
\begin{equation}\label{eq:Pol Circ log}
\mathcal{P}_{K_0}(N) =  - \log(2).
\end{equation}
When $-2 < s <-1$, there are $b_{s,1}, c_{s,1} \in \mathbb{R}_{>0}$ so that for $N$ sufficiently large,
\begin{equation}\label{eq:Pol Circ s leq -1}
\mathcal{I}_{s,1} N - b_{s,1} N^{s} \leq \mathcal{P}_{K_s}(N) \leq  \mathcal{I}_{s,1} N - c_{s,1} N^{s}.
\end{equation}
\end{corollary}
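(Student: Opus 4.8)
The plan is to reduce the whole statement to one explicit configuration---the $N$-th roots of unity (the equally spaced points, up to rotation)---and then to extract the asymptotics of a single trigonometric sum. In the range $-1\le s<1$ I would invoke the fact, classically known in this range, that the equally spaced configuration maximizes the Riesz $s$-polarization on $\mathbb S^1$, together with the observation that, by rotational symmetry, the minimum of its potential is attained at a midpoint between two consecutive nodes. Evaluating the potential there, the chord lengths to the nodes are $2\sin\frac{\pi(2j+1)}{2N}$, $j=0,\dots,N-1$, so that
\[ \mathcal P_{K_s}(N)=\frac{\sgn(s)}{2^s}\,S_N(s),\qquad S_N(s)=\sum_{j=0}^{N-1}\Bigl(\sin\frac{\pi(2j+1)}{2N}\Bigr)^{-s}, \]
and everything reduces to the large-$N$ behavior of $S_N(s)$.

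The core of the proof is the asymptotic expansion of $S_N(s)$, which I would obtain by Poisson summation. Writing $F(x)=|\sin\pi x|^{-s}$, a $1$-periodic function, the midpoint sampling corresponds to the shift $\theta=\frac12$ and gives $\frac1N S_N(s)=\sum_{m\in\mathbb Z}(-1)^m\widehat F(mN)$; the alternating sign $(-1)^m$ is precisely the imprint of that shift. The term $m=0$ is the mean $\widehat F(0)$ and produces the first-order constant $\mathcal I_{s,1}$. Because the singularity of $F$ at the integers is of even type $|x|^{-s}$, the Fourier coefficients admit an expansion in the powers $|n|^{s-1},|n|^{s-3},\dots$ (the power $|n|^{s-2}$ is absent); summing the leading term $C_1|n|^{s-1}$ against $(-1)^m$ yields the Dirichlet eta value $\eta(1-s)=(1-2^{s})\zeta(1-s)$---this is exactly where the factor $(2^s-1)$ is born---while the next coefficient contributes only at order $N^{s-2}$. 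Inserting the explicit Gamma-function value of $C_1$ and applying the functional equation of $\zeta$ turns $\zeta(1-s)$ into $\zeta(s)$ and collapses the $N^s$ coefficient to $\frac{2\,\sgn(s)(2^s-1)}{(2\pi)^s}\zeta(s)$, giving \eqref{eq:Pol Circ s geq -1}.

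The logarithmic case $s=0$ is exact. There the midpoint potential equals $-\sum_{j=0}^{N-1}\log\bigl(2\sin\frac{\pi(2j+1)}{2N}\bigr)$, and setting $z=1$ in the factorization $z^N+1=\prod_{j=0}^{N-1}\bigl(z-e^{i\pi(2j+1)/N}\bigr)$ gives $\prod_{j=0}^{N-1}2\sin\frac{\pi(2j+1)}{2N}=2$; hence the potential is identically $-\log 2$, and since $\mathcal I_{0,1}=0$ we obtain \eqref{eq:Pol Circ log}. In the range $-2<s<-1$ I would not assert optimality of the equally spaced set. The upper bound $\mathcal P_{K_s}(N)\le\mathcal I_{s,1}N-c_{s,1}N^{s}$ is already provided by Theorem~\ref{thm:Singular Riesz Polarization} with $d=1$; for the matching lower bound I would use the equally spaced configuration merely as a test set, so that $\mathcal P_{K_s}(N)\ge\min_\alpha U_{\mathrm{eq}}(\alpha)$, and the same Poisson-summation expansion (valid verbatim for $|s|<2$) gives $\min_\alpha U_{\mathrm{eq}}(\alpha)\ge\mathcal I_{s,1}N-b_{s,1}N^{s}$, which is \eqref{eq:Pol Circ s leq -1}.

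The principal technical obstacle is the Fourier analysis of $F=|\sin\pi x|^{-s}$: one must identify the leading constant $C_1$ and verify that the subleading coefficients contribute only at order $N^{s-2}$, which rests on the even parity of the singularity (so that the expansion skips $|n|^{s-2}$). A separate, conceptual boundary occurs at $s=-1$: optimality of the equally spaced configuration is available only for $-1\le s<1$, which is what upgrades the lower bound into the exact constant there, whereas for $-2<s<-1$ this configuration can be used only as a test set and the matching upper bound must be imported from Theorem~\ref{thm:Singular Riesz Polarization}. A last point requiring care, relevant only for $s<0$ (for $s>0$ the potential blows up at the nodes and the minimum is trivially at a midpoint), is to confirm that the minimum of the equally spaced potential sits at the midpoint; comparing the midpoint and node values shows the midpoint is the smaller one exactly when $\sgn(s)\,\zeta(s)\,(2^s-2)<0$, which indeed holds throughout $-2<s<0$.
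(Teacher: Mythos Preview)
Your approach is correct and reaches the same conclusions, but the route differs from the paper's in one key respect. Rather than analyzing the midpoint sum $S_N(s)$ directly via Poisson summation, the paper observes that the midpoints of the $N$-th roots of unity together with the roots themselves form the $2N$-th roots of unity, which yields the identity
\[
P_{K_s}(\omega_N^*)=\frac{E_{K_s}(\omega_{2N}^*)}{2N}-\frac{E_{K_s}(\omega_N^*)}{N}.
\]
The asymptotic expansion of $E_{K_s}(\omega_N^*)$ is then imported wholesale from Brauchart--Hardin--Saff (stated in the paper as Theorem~\ref{thm:zetafunction}), and the factor $(2^s-1)$ drops out mechanically when one subtracts the expansion at $N$ from the one at $2N$. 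Your Poisson-summation route is more self-contained---you generate the zeta values and the $(2^s-1)$ factor from scratch via the Dirichlet eta function---whereas the paper's route is shorter because the analytic work has been outsourced to the cited energy expansion. For $s=0$ the paper likewise plugs $E_{K_0}(\omega_N^*)=-N\log N$ into the difference identity; your factorization of $z^N+1$ is an equally direct alternative. In the range $-2<s<-1$ both arguments are identical in structure: equally spaced points as a test set for the lower bound, and Theorem~\ref{thm:Singular Riesz Polarization} with $d=1$ for the upper bound.

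One small remark on your final paragraph: merely comparing the midpoint value to the node value does not by itself locate the global minimum of $U_{\mathrm{eq}}$. For $-1\le s<1$ this is not an issue, since midpoint minimality is part of the same convexity result (Hardin--Kendall--Saff) that you already invoke for optimality of the equally spaced configuration. For $-2<s<-1$ you only claim the order $N^s$, and for that your uniform-in-$\alpha$ Poisson bound already suffices without knowing where the minimum sits; so the midpoint-versus-node comparison is unnecessary there rather than insufficient.
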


\noindent An extensive discussion of the one-dimensional case is presented in  \S\ref{sec:circle}.

As suggested  by these sharp bounds in the one-dimensional case, as well as the asymptotics for the hypersingular case $s \geq d$, which were studied in \cite{AmbBE, BorB, BorHRS, ErdS, HKS13, HarPS}, we conjecture that the upper bounds for polarization given in Theorem \ref{thm:Singular Riesz Polarization} are  sharp  for $-2<s\le 0$ in all dimensions $d\ge 1$, i.e. the second term is of the order $\mathcal O (N^{s/d})$.

\subsection{Riesz energy of greedy sequences}
While Theorem \ref{thm:Singular Riesz Polarization} is interesting in its own right, we also use it as a tool to obtain second-order energy estimates for the greedy sequences (see  \S\ref{subsec:greedy}), which are of optimal order in the range $0<s<d$.

\begin{theorem}
\label{thm:greedyenergy}
Let $d \geq 2$, $-2 < s < d$, and let $(x_n)_{n \in\mathbb{N}}$ be a greedy sequence for $K_s$ as defined in \eqref{eq:greedy} with fixed initial point $x_1 \in \mathbb{S}^d$.  For $N>1$, denote the first $N$ elements of the sequence by $\omega_{N,d} = \{ x_1, \dots, x_N \}$. Then there exist positive constants $C_{s,d}, C'_{s,d}$  such that the following holds for $N \geq 2$.

\noindent For $0 < s < d$, 
\begin{equation}\label{eq:greedyenergy}
 - C_{s,d} N^{1 +\frac{s}{d}} \leq E_{K_s}(\omega_{N,d})  - \mathcal{I}_{s,d} N^2  \leq  - C'_{s,d} N^{1 +\frac{s}{d}}.
\end{equation}
For $-2 < s < 0$, 
\begin{equation}\label{eq:greedyenergy1}
 C_{s,d} N^{1 +\frac{s}{d}}    \leq    E_{K_s}(\omega_{N,d})  - \mathcal{I}_{s,d} N^2 \leq  -  \mathcal{I}_{s,d} N  - C_{s,d}' N^{1 +\frac{s}{d}} .
\end{equation}
For $s = 0$, 
\begin{equation}\label{eq:greedyenergy2}
- \frac{1}{d} N \log(N) -  C'_{0,d} N \leq  E_{K_0}(\omega_{N,d})  - \mathcal{I}_{0,d} N^2  \leq \mathcal{O}( N).
\end{equation}
\end{theorem}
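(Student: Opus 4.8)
The plan is to turn the greedy energy into a telescoping sum of polarizations of the initial segments of the sequence, bound each term from above by the maximal polarization (using the \emph{novel} upper bounds of Theorem~\ref{thm:Singular Riesz Polarization}), and obtain the lower bounds by the trivial comparison with the minimal energy together with the known second-order asymptotics (Theorem~\ref{thm:Riesz Energy Asymptotics}).

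First I would record the telescoping identity. Write $U_n(x)=\sum_{i=1}^n K_s(x,x_i)$ for the potential of the first $n$ points; by the greedy rule \eqref{eq:greedy}, the next point $x_{n+1}$ minimizes $U_n$, so $U_n(x_{n+1})=\min_x U_n(x)=P_{K_s}(\{x_1,\dots,x_n\})$. Since inserting $x_{n+1}$ changes the energy exactly by the new cross terms,
\[
E_{K_s}(\omega_{n+1,d})-E_{K_s}(\omega_{n,d})=2\sum_{i=1}^n K_s(x_i,x_{n+1})=2\,P_{K_s}(\{x_1,\dots,x_n\}),
\]
and $E_{K_s}(\omega_{1,d})=0$, telescoping gives
\[
E_{K_s}(\omega_{N,d})=2\sum_{n=1}^{N-1}P_{K_s}(\{x_1,\dots,x_n\}).
\]
Each summand is the polarization of an actual configuration, so $P_{K_s}(\{x_1,\dots,x_n\})\le \mathcal P_{K_s}(n)$.

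For the upper bounds I would insert this inequality and then the upper estimates of Theorem~\ref{thm:Singular Riesz Polarization}, which in all ranges read $\mathcal P_{K_s}(n)\le \mathcal I_{s,d}\,n-c_{s,d}\,n^{s/d}$ (with $n^{s/d}$ replaced by $1$ when $s=0$). Using $\sum_{n=1}^{N-1}n=\tfrac12 N(N-1)$ and the elementary estimate $\sum_{n=1}^{N-1}n^{s/d}=\tfrac{d}{d+s}N^{1+s/d}+\mathcal O(N^{s/d})$ (valid since $-1<s/d<1$ for $d\ge 2$, $-2<s<d$), this yields
\[
E_{K_s}(\omega_{N,d})-\mathcal I_{s,d}N^2\le -\mathcal I_{s,d}N-2c_{s,d}\tfrac{d}{d+s}N^{1+s/d}+\mathcal O(N^{s/d}).
\]
The remaining bookkeeping splits by sign. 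For $0<s<d$ we have $\mathcal I_{s,d}>0$ and $N^{1+s/d}$ dominates the linear term, so $-\mathcal I_{s,d}N$ is absorbed and the bound becomes $\le -C'_{s,d}N^{1+s/d}$, giving the upper half of \eqref{eq:greedyenergy}. For $-2<s<0$ one has $1+s/d<1$, the linear term dominates, and since $\mathcal I_{s,d}<0$ makes $-\mathcal I_{s,d}N>0$ it must be kept, producing the upper half of \eqref{eq:greedyenergy1}. For $s=0$ the correction is $-\mathcal I_{0,d}N-2c_{0,d}(N-1)=\mathcal O(N)$, which is \eqref{eq:greedyenergy2}.

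For the lower bounds I would use only the trivial fact that the greedy configuration is one admissible configuration, $E_{K_s}(\omega_{N,d})\ge \mathcal E_{K_s}(N)$, and invoke the known second-order minimal-energy asymptotics (Theorem~\ref{thm:Riesz Energy Asymptotics}), whose lower bound has second-order term matching, in order and sign, the one asserted in \eqref{eq:greedyenergy}--\eqref{eq:greedyenergy2}: of order $-N^{1+s/d}$ for $0<s<d$, of order $+N^{1+s/d}$ for $-2<s<0$, and $-\tfrac1d N\log N+\mathcal O(N)$ for $s=0$. Combining these lower bounds with the telescoping upper bounds completes all three estimates. The main conceptual point is that it is precisely the sharp second-order polarization \emph{upper} bound of Theorem~\ref{thm:Singular Riesz Polarization} --- rather than the classical first-order $\mathcal P_{K_s}(n)\sim \mathcal I_{s,d}n$ --- that makes the telescoping produce a correction of the optimal order $N^{1+s/d}$; the only real care required is the sign analysis tracking whether $\mathcal I_{s,d}N$ or $N^{1+s/d}$ dominates, which is what forces the three ranges to take genuinely different forms (and, for $-2<s<0$, leaves a gap between the $\Theta(N)$ upper and $\Theta(N^{1+s/d})$ lower corrections, so that optimality is obtained only for $0<s<d$).
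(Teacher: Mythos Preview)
Your proof is correct and follows essentially the same approach as the paper: the telescoping identity $E_{K_s}(\omega_{N,d})=2\sum_{n=1}^{N-1}P_{K_s}(\omega_{n,d})$, the bound $P_{K_s}(\omega_{n,d})\le\mathcal P_{K_s}(n)$ together with the upper bound of Theorem~\ref{thm:Singular Riesz Polarization}, and the lower bounds via $E_{K_s}(\omega_{N,d})\ge\mathcal E_{K_s}(N)$ and Theorem~\ref{thm:Riesz Energy Asymptotics}, are exactly what the paper does (the paper phrases it for general $m$ initial points and then specializes to $m=1$). Your case analysis and sign bookkeeping match the paper's as well.
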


The results of Theorem \ref{thm:greedyenergy} are new for all $d\ge 2$. {The case of $s \leq -2$ was entirely handled in \cite[Theorems 4.1 and 5.2]{LopM} and the case of $s \geq d$ has been studied in  \cite{LopS, LopW}.}
In the case of the circle $\mathbb S^1$, i.e. $d=1$, more precise information about the Riesz energy of greedy sequences is known, see Theorem \ref{thm:Energy of Greedy points Circle}.

The lower bounds in Theorem \ref{thm:greedyenergy}  are well-known optimal estimates for minimal Riesz $s$-energies discussed in \S \ref{sec:lower}, while the upper bounds for greedy sequences are the case $m =1$ of the following more general result about greedy sequences with an arbitrary number of initial points, which we prove in \S \ref{subsec:greedy}.

\begin{theorem}\label{thm:Greedy with m starting points}
Let $d \in \mathbb{N}$, $-2 < s < d$, and let $(x_n)_{n \in\mathbb{N}}$ be the  greedy sequence of points on $\mathbb{S}^d$ with respect to $K_s$ as defined in \eqref{eq:greedy} with fixed arbitrary $x_1, ..., x_m \in \mathbb{S}^d$. Let $\omega_{N,d}$ denote the set of the first $N$ points in this sequence. Then there exists a positive constant $C_{s,d}$ such that for $N > m$, the following hold.

\noindent For $ 0 \leq s < d$,
\begin{equation}
E_{K_s}(\omega_{N,d}) \leq  \mathcal{I}_{s,d}(N^2 - m^2) - C_{s,d}' (N^{1 + \frac{s}{d}} - m^{1 + \frac{s}{d}} ) - \mathcal{I}_{s,d}(N-m) + E_{K_s} (\omega_{ m,d}) + \mathcal{O}(N^{\frac{s}{d}}).
\end{equation}
For $ - 2< s \leq 0$ if $d \geq 2$ and $-1 < s \leq 0$ if $d=1$,
\begin{equation}\label{eq:greedyenergy1m}
E_{K_s}(\omega_{N,d}) \leq \mathcal{I}_{s,d}(N^2 - m^2) - \mathcal{I}_{s,d}(N-m)   - C_{s,d}' (N^{1 + \frac{s}{d}} - m^{1 + \frac{s}{d}} ) + E_{K_s} (\omega_{ m,d}) +  \mathcal{O}(1).
\end{equation}
For $d=1$ and $s=-1$,  
\begin{equation}
E_{K_{-1}}(\omega_{N,1}) \leq \mathcal{I}_{-1,1}(N^2 - m^2) - \mathcal{I}_{-1,1}(N-m) - C_{-1,1}' (\log(N) - \log(m))  + E_{K_{-1}} (\omega_{ m,1}) +  \mathcal{O}(1).
\end{equation}
For $d=1$ and $-2 < s < -1$,
\begin{equation}
E_{K_s}(\omega_{N,1}) \leq \mathcal{I}_{s,1}(N^2 - m^2) - \mathcal{I}_{s,1}(N-m)   +  E_{K_s} (\omega_{ m,1}) + \mathcal{O}(N^{1+ s}) +  \mathcal{O}(m^{1+ s}).
\end{equation}
\end{theorem}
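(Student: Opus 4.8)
The plan is to convert the greedy construction into a telescoping estimate whose engine is the polarization upper bounds of Theorem \ref{thm:Singular Riesz Polarization}. The starting observation is that inserting one point changes the energy by a pure polarization increment: from the definition \eqref{eq:en},
\[
E_{K_s}(\omega_{k+1,d}) - E_{K_s}(\omega_{k,d}) = 2\sum_{j=1}^{k} K_s(x_{k+1},x_j),
\]
and by the greedy rule \eqref{eq:greedy} the right-hand side equals exactly $2\,P_{K_s}(\omega_{k,d})$, the polarization of the current configuration. Since $P_{K_s}(\omega_{k,d}) \le \mathcal{P}_{K_s}(k)$ by the very definition of maximal polarization, telescoping from $k=m$ to $k=N-1$ yields the master inequality
\[
E_{K_s}(\omega_{N,d}) \;\le\; E_{K_s}(\omega_{m,d}) + 2\sum_{k=m}^{N-1}\mathcal{P}_{K_s}(k).
\]
Everything after this is a matter of inserting the known upper bounds for $\mathcal{P}_{K_s}(k)$ and performing the summation.

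Next I would substitute the upper bounds from Theorem \ref{thm:Singular Riesz Polarization}, which in each case have the shape $\mathcal{P}_{K_s}(k) \le \mathcal{I}_{s,d}\,k - (\text{correction})$, the correction being $c_{s,d}\,k^{s/d}$ when $s\neq 0$ and $c_{0,d}$ when $s=0$. The linear part sums exactly: using $\sum_{k=m}^{N-1}k = \tfrac12\big[(N^2-m^2)-(N-m)\big]$ produces the main term $\mathcal{I}_{s,d}(N^2-m^2) - \mathcal{I}_{s,d}(N-m)$ common to all four displayed bounds. The correction part contributes the negative second-order term, so the computation reduces to controlling the power sum $\sum_{k=m}^{N-1}k^{s/d}$ (or the harmonic, resp. constant, sum at the borderline exponents).

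The power sum is then handled by comparison with $\int_m^N x^{s/d}\,dx = \tfrac{d}{s+d}\big(N^{1+s/d}-m^{1+s/d}\big)$, which is exactly where the four cases separate, according to the size of $s/d$. For $s>0$ the summand is increasing and the monotone comparison error is of the order of the last term $N^{s/d}$, giving the $\mathcal{O}(N^{s/d})$ remainder; at $s=0$ the correction is $-2c_{0,d}(N-m)$ with $\mathcal{O}(1)$ error, producing the linear correction; for $-1<s/d<0$ (which covers all of $-2<s<0$ when $d\ge 2$, since then $s/d>-2/d\ge -1$, and $-1<s<0$ when $d=1$) the summand is decreasing but $1+s/d>0$, so the sum still grows like $N^{1+s/d}$ with bounded comparison error $\mathcal{O}(1)$; the endpoint $s/d=-1$ (attainable only for $d=1,\,s=-1$) gives the harmonic sum and hence the $\log N - \log m$ term; and for $s/d<-1$ (only $d=1,\,-2<s<-1$) the series converges, so the correction is absorbed into the error terms $\mathcal{O}(N^{1+s})+\mathcal{O}(m^{1+s})$ coming from the head and tail of the series. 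Taking $C'_{s,d} = \tfrac{2d}{s+d}c_{s,d}$ (and $C'_{0,d}=2c_{0,d}$) then reproduces each displayed inequality.

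The conceptual work is entirely contained in Theorem \ref{thm:Singular Riesz Polarization}; granting it, there is no genuine obstacle, only bookkeeping. The one point demanding care is the remainder in each sum-to-integral comparison, since the case boundaries in the statement are dictated precisely by whether $\sum k^{s/d}$ diverges like a power, diverges logarithmically, or converges. Accordingly, I would organize the argument around the sign and magnitude of $s/d$ rather than of $s$ itself, treating the transition values $s=0$ and (for $d=1$) $s=-1$ separately. Finally, because Theorem \ref{thm:Singular Riesz Polarization} controls $\mathcal{P}_{K_s}(k)$ only up to constants, the constants $C'_{s,d}$ obtained here are likewise determined only up to constants, which is all that the statement asserts.
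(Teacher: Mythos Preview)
Your proposal is correct and follows essentially the same route as the paper: telescope the energy via the greedy identity $E_{K_s}(\omega_{k+1,d})-E_{K_s}(\omega_{k,d})=2P_{K_s}(\omega_{k,d})\le 2\mathcal P_{K_s}(k)$, insert the upper bounds from Theorem~\ref{thm:Singular Riesz Polarization}, sum the linear part exactly, and handle $\sum_{k=m}^{N-1}k^{s/d}$ by the same case split on the sign and size of $s/d$. The paper's proof is organized identically, with the four cases arising precisely from whether $s/d\ge 0$, $-1<s/d<0$, $s/d=-1$, or $s/d<-1$.
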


These are the first second-order upper bounds for the energy in the literature for the case when the greedy sequence is initialized with $m$ arbitrary points, rather than a single point.   The Riesz energy of greedy sequences initialized by one point has been studied before in a general setting, see e.g. \cite{Lop, LopS, Sic}, where the first-order term was obtained, along with the second-order estimates in the case $d=1$, see \cite{LopM, LopW}.   In fact, for  $m=1$ these results give much stronger bounds on the Riesz energy of greedy sequences on the circle $\mathbb S^1$, but the proofs rely strongly on the structural properties  of the greedy sequences (see Theorem \ref{thm:Energy of Greedy points Circle} and the discussion in \S \ref{sec:greedyenergyS1}), which do not generalize to $m>1$. Hence, the general  results of Theorem \ref{thm:Greedy with m starting points} are interesting (although probably not sharp for $s \in (-2,0]$) even in the one-dimensional case).

\subsection{Polarization of greedy sequences}  

Theorem \ref{thm:Greedy with m starting points} demonstrates, in particular, that for any greedy sequence with $m$ initial data points
\begin{equation}
\lim_{N \rightarrow \infty} \frac{E_{K_s}(\omega_{N,d})}{N^2} = \mathcal{I}_{s,d}.
\end{equation}
Combining this with Theorem 4.2.2 and  Corollaries 14.6.5 and 14.6.7 from \cite{BHS}, we have that any such greedy sequence is uniformly distributed and therefore yields optimal first-order asymptotics for polarization (which was shown for $m=1$ in \cite[Thm 2.1]{LopS}  {and \cite[Lemma 3.1]{Sic}}).
\begin{corollary}\label{cor:unif}
Let $d \in \mathbb{N}$, $-2 < s < d$, and $(x_n)_{n \in\mathbb{N}}$ be the  greedy sequence of points on $\mathbb{S}^d$ with respect to $K_s$ as defined in \eqref{eq:greedy} with fixed arbitrary $x_1, ..., x_m \in \mathbb{S}^d$. Let $\omega_{N,d}$ denote the set of the first $N$ points in this sequence. Then the sequence of point sets $\{ \omega_{N,d} \}_{N=1}^{\infty}$ is uniformly distributed on $\mathbb{S}^d$, and
\begin{equation}
\lim_{N \rightarrow \infty} \frac{P_{K_s}(\omega_{N,d})}{N} = \mathcal{I}_{s,d}.
\end{equation}
\end{corollary}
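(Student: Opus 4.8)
The plan is to prove the statement in two stages: first that the normalized counting measures of $\omega_{N,d}$ equidistribute, and then to extract the polarization limit from this equidistribution. Throughout, set $\nu_N = \frac{1}{N}\sum_{i=1}^{N}\delta_{x_i}$ for the normalized counting measure and $U^{\nu_N}(x)=\int K_s(x,y)\,d\nu_N(y)=\frac{1}{N}\sum_{j=1}^{N}K_s(x,x_j)$ for its potential.

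\emph{Equidistribution.} Theorem~\ref{thm:Greedy with m starting points} gives, in each range of $s$, an upper bound for $E_{K_s}(\omega_{N,d})$ whose leading term is $\mathcal{I}_{s,d}N^2$ and whose remainder is $o(N^2)$, so that $\limsup_N E_{K_s}(\omega_{N,d})/N^2 \le \mathcal{I}_{s,d}$. Since $E_{K_s}(\omega_{N,d}) \ge \mathcal{E}_{K_s}(N) = \mathcal{I}_{s,d}N^2 + o(N^2)$ by the minimal-energy asymptotics (Theorem~\ref{thm:Riesz Energy Asymptotics}), the two bounds combine to the limit $\lim_N E_{K_s}(\omega_{N,d})/N^2 = \mathcal{I}_{s,d}$ recorded just above the statement. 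In the range $-2<s<d$ the continuous energy functional $I_{K_s}(\mu)=\iint K_s\,d\mu\,d\mu$ over Borel probability measures is uniquely minimized by $\sigma_d$, with value $\mathcal{I}_{s,d}$, owing to the (conditional) strict positive-definiteness of $K_s$ on $\mathbb{S}^d$ in this range. The standard principle that configurations with asymptotically minimal energy converge weak$^\ast$ to the equilibrium measure (Theorem~4.2.2 of \cite{BHS}) then yields $\nu_N \rightharpoonup \sigma_d$, which is precisely the uniform distribution of $\{\omega_{N,d}\}$.

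\emph{Polarization.} Note that $P_{K_s}(\omega_{N,d})/N=\min_{x\in\mathbb{S}^d}U^{\nu_N}(x)$. Bounding the minimum by the $\sigma_d$-average and using \eqref{eq:Continuous Min Energy} gives, with no appeal to equidistribution, $\min_x U^{\nu_N}(x)\le\int U^{\nu_N}\,d\sigma_d=\mathcal{I}_{s,d}$, hence $\limsup_N P_{K_s}(\omega_{N,d})/N\le\mathcal{I}_{s,d}$. The matching lower bound is where equidistribution enters. For $-2<s<0$ the kernel $K_s$ is continuous and bounded on $\mathbb{S}^d\times\mathbb{S}^d$, so $\nu_N\rightharpoonup\sigma_d$ upgrades to uniform convergence of $U^{\nu_N}$ to the constant $\mathcal{I}_{s,d}$, whence $\min_x U^{\nu_N}(x)\to\mathcal{I}_{s,d}$. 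For $0\le s<d$ the kernel is only lower semicontinuous with a diagonal singularity: choosing $x^{(N)}$ at which the minimum is attained, passing to a subsequence with $x^{(N)}\to x^\ast$, and applying the principle of descent to $\delta_{x^{(N)}}\otimes\nu_N\rightharpoonup\delta_{x^\ast}\otimes\sigma_d$ gives $\liminf_N U^{\nu_N}(x^{(N)})\ge\int K_s(x^\ast,y)\,d\sigma_d(y)=\mathcal{I}_{s,d}$; this is exactly the content of Corollaries~14.6.5 and~14.6.7 of \cite{BHS}. Together with the upper bound this proves $\lim_N P_{K_s}(\omega_{N,d})/N=\mathcal{I}_{s,d}$.

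\emph{Main obstacle.} The one genuinely nontrivial point is the polarization lower bound in the singular range $0\le s<d$: one must ensure that the diagonal singularity of $K_s$ cannot force the liminf of the potentials below $\mathcal{I}_{s,d}$. This is guaranteed precisely because $s<d$ keeps the limiting potential $\int K_s(x^\ast,y)\,d\sigma_d(y)$ finite and, by rotational symmetry, constant equal to $\mathcal{I}_{s,d}$, so the lower-semicontinuity (principle of descent) argument closes the gap. All remaining steps amount to assembling the cited results of \cite{BHS} with the energy limit furnished by Theorem~\ref{thm:Greedy with m starting points}.
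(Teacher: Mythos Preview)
Your proposal is correct and follows essentially the same route as the paper: deduce $\lim_N E_{K_s}(\omega_{N,d})/N^2=\mathcal I_{s,d}$ from Theorem~\ref{thm:Greedy with m starting points} together with the minimal-energy lower bound, invoke Theorem~4.2.2 of \cite{BHS} for equidistribution, and then Corollaries~14.6.5 and~14.6.7 of \cite{BHS} for the polarization limit. You have simply unpacked what those corollaries say (trivial upper bound via averaging, lower bound via uniform convergence for continuous $K_s$ and via the principle of descent for the singular range), which is exactly the content the paper cites.
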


Moreover, Theorem \ref{thm:greedyenergy}  states that greedy sequences on $\mathbb{S}^d$ achieve Riesz energy that is asymptotically optimal to the second-order (at least up to constants) in the potential-theoretic case $0<s<d$. This may be interpreted as a way of measuring regularity of a set of points, we would expect the greedy sequence to behave fairly regularly also with respect to other notions of uniformity.  We prove that greedy points also achieve almost maximal polarization (up to the second-order term)  most of the time in the sense of asymptotic density (in particular, for infinitely many $N$).

\begin{corollary}\label{cor:pol}
Let $d \geq 2$, $0 < s < d$, and $(x_n)_{n \in\mathbb{N}}$ be a greedy sequence for $K_s$.  Let $\omega_{N,d}$ denote the set of the first $N$ points in this sequence. For every $\varepsilon > 0$, there exists $X_{s,d, \varepsilon} > 0$ such that 
$$ \# \left\{1 \leq j \leq N: P_{K_s} (\omega_{j,d}) \geq j \cdot \mathcal{I}_{s,d} - X_{s,d,\varepsilon} \cdot j^{\frac{s}{d}} \right\} \geq \left(1 - \varepsilon \right)N \qquad \mbox{as}~N \rightarrow \infty.$$
\end{corollary}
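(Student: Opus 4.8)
The plan is to exploit the elementary telescoping identity linking the polarization of the partial point sets to the increments of the greedy energy, and then to combine the two-sided energy estimate of Theorem \ref{thm:greedyenergy} with the polarization upper bound of Theorem \ref{thm:Singular Riesz Polarization} through a counting argument. First I would record the basic recursion: writing $a_j := P_{K_s}(\omega_{j,d})$, the greedy rule \eqref{eq:greedy} makes $x_{j+1}$ the point at which the polarization of $\omega_{j,d}$ is attained, so adding it increases the energy by exactly $E_{K_s}(\omega_{j+1,d}) - E_{K_s}(\omega_{j,d}) = 2\sum_{i=1}^{j} K_s(x_{j+1},x_i) = 2a_j$. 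Setting $b_j := a_j - \mathcal{I}_{s,d}\, j$, the target inequality $P_{K_s}(\omega_{j,d}) \geq j\,\mathcal{I}_{s,d} - X j^{s/d}$ is precisely $b_j \geq -X j^{s/d}$, so it suffices to bound the cardinality of the bad set $B_N := \{1 \leq j \leq N : b_j < -X j^{s/d}\}$.

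Next I would establish the two required ingredients. On one hand, since $a_j \leq \mathcal{P}_{K_s}(j)$ and $0<s<d$, the upper bound in Theorem \ref{thm:Singular Riesz Polarization} gives $b_j \leq -c_{s,d}\, j^{s/d} \leq 0$ for every $j$; in particular all of the $b_j$ are non-positive. On the other hand, telescoping the recursion and using $E_{K_s}(\omega_{1,d}) = 0$ yields $\sum_{j=1}^{N-1} 2a_j = E_{K_s}(\omega_{N,d})$, whence $\sum_{j=1}^{N-1} b_j = \tfrac{1}{2}\bigl(\mathcal{I}_{s,d} N + (E_{K_s}(\omega_{N,d}) - \mathcal{I}_{s,d} N^{2})\bigr)$. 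The lower bound in \eqref{eq:greedyenergy} then forces $\sum_{j=1}^{N-1} b_j \geq -\tfrac{1}{2}C_{s,d}\, N^{1+s/d}$ for all large $N$ (the term $\mathcal{I}_{s,d} N$ only helps, since $\mathcal{I}_{s,d}>0$ for $s>0$).

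Finally I would run the counting argument. Because every $b_j \leq 0$, discarding the indices outside $B_N$ only increases the sum, so $-\tfrac{1}{2}C_{s,d} N^{1+s/d} \leq \sum_{j=1}^{N-1} b_j \leq \sum_{j\in B_N} b_j < -X \sum_{j\in B_N} j^{s/d}$. Since $j \mapsto j^{s/d}$ is increasing, $\sum_{j\in B_N} j^{s/d}$ is minimized over sets of fixed size by taking the smallest indices, giving $\sum_{j\in B_N} j^{s/d} \geq \sum_{j=1}^{|B_N|} j^{s/d} \geq (1+s/d)^{-1}|B_N|^{1+s/d}$. Combining the two estimates yields $|B_N|^{1+s/d} \leq \tfrac{1}{2}(1+s/d)C_{s,d}\, X^{-1} N^{1+s/d}$, that is, $|B_N| \leq \bigl(\tfrac{(1+s/d)C_{s,d}}{2X}\bigr)^{1/(1+s/d)} N$. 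Choosing $X = X_{s,d,\varepsilon}$ large enough that this constant is at most $\varepsilon$ gives $|B_N| \leq \varepsilon N$, so the complementary good set has at least $(1-\varepsilon)N$ elements for all large $N$, as claimed.

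I expect the only genuinely delicate point to be the observation in the counting step that the polarization upper bound of Theorem \ref{thm:Singular Riesz Polarization} guarantees $b_j \leq 0$ for \emph{every} $j$: this non-positivity is exactly what allows the telescoped energy lower bound of order $-N^{1+s/d}$ to be ``spread out'' across the indices and converted into a density statement. Everything else — the telescoping identity, the summation, and the convexity estimate for $\sum j^{s/d}$ — is routine bookkeeping, and the real analytic content is imported wholesale from the already-established Theorems \ref{thm:Singular Riesz Polarization} and \ref{thm:greedyenergy}.
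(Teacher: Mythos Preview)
Your proposal is correct and follows essentially the same route as the paper: telescope the greedy energy into a sum of polarizations, use the energy lower bound from Theorem~\ref{thm:greedyenergy}, the non-positivity of $b_j$, and the monotonicity of $j\mapsto j^{s/d}$ to bound $|B_N|$. The only difference is that you invoke Theorem~\ref{thm:Singular Riesz Polarization} to obtain $b_j\le 0$, whereas the paper uses the weaker (and more elementary) averaging inequality $P_{K_s}(\omega_{j,d})\le \int_{\mathbb S^d}\sum_i K_s(x,x_i)\,d\sigma_d(x)=j\,\mathcal I_{s,d}$; your argument is correct, just slightly over-powered at that step.
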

This is optimal up to the value of $X_{s,d,\varepsilon}$ since Theorem \ref{thm:Singular Riesz Polarization}
 implies the unconditional bound
$ P_{K_s} (\omega_{j,d}) \leq \mathcal{P}_{K_s}(j) \leq \mathcal{I}_{s,d}\cdot j - c_{s,d} \cdot j^{\frac{s}{d}}.$
The proof gives a quantitative description of how $X_{s,d,\varepsilon}$ depends on $s,d$ and $\varepsilon$ in terms of the constant $C_{s,d}$ arising in (\ref{eq:greedyenergy}), we refer to the proof in \S\ref{sec:cor} for details.

\subsection{A uniform $L^2$-discrepancy bound} We now concentrate on Riesz energy with  $s=-1$ or, equivalently, on the problem of selecting $\left\{x_1, \dots, x_N \right\} \subset \mathbb{S}^d$ in such a way that the sum of distances 
$\sum_{i,j = 1}^{N} \|x_i - x_j\| $  is maximized. 
The problem has particular geometric significance in light of the Stolarsky invariance principle (formally stated in \S \ref{sec:thm1}) which states that maximizing the sum of distances is the
same as minimizing the spherical $L^2-$cap discrepancy. On $\mathbb{S}^2$, there are a large number of deterministic constructions  \cite{ABD, etayo, fer, FHM, HEAL, kor, alex, marzo, narco, s}
and some of them are known to achieve a spherical cap discrepancy as small as $N^{-1/2}$. The seminal results of Beck imply that optimal constructions should be as small as $N^{-3/4}$ (and there is  numerical evidence that some of these sequences of point sets achieve it). Our contribution to this question is two-fold:
\begin{enumerate}
\item we show that a large class of recursively defined sequences, containing all greedy sequences, achieve an $L^2-$spherical cap discrepancy of order $N^{-1/2}$ with an explicit small constant, and
\item we provide some numerical evidence that greedy sequences achieve a rate that is either $N^{-3/4}$ or very close to it, see \S \ref{sec:numerics}.
\end{enumerate}
The result will apply to a broader class of sequences: we note the trivial inequality
$$ \max_{x \in \mathbb{S}^d}  \sum_{i=1}^{n} \| x_{}- x_i\| \geq n \int_{\mathbb{S}^d}  \sum_{i=1}^{n} \| x- x_i\| d\sigma_d(x) = n \cdot (-\mathcal{I}_{-1, d}),$$
where we note that $-\mathcal{I}_{-1, d} > 0$.
The subsequent theorem applies to all sequences where the next element $x_{n+1}$ is always chosen in such a way that the inequality above is satisfied, i.e. rather than maximizing the sum, we choose any point where the sum exceeds its mean value (and, in particular, the greedy constructions always satisfy the inequality). 
\begin{theorem} 
\label{thm:L2discrepancy}
Let $\omega_N = \left\{x_1, \dots, x_m \right\} \subset \mathbb{S}^d$ be an arbitrary initial set and suppose that for all $n \geq m$, the set is extended to a sequence satisfying 
$$  \sum_{i=1}^{n} \| x_{n+1}- x_i\| \geq n \cdot (-\mathcal{I}_{-1,d}).$$
Let $\omega_{N,d}$ denote the set of the first $N$ points in this sequence. Then, for any $N \geq m$, 
$$ D_{L^2, {\tiny \emph{cap}}}(\omega_{N,d}) \leq \sqrt{ \frac{1}{d} \frac{\Gamma((d+1)/2)}{\sqrt{\pi} \Gamma(d/2)}}    (-\mathcal{I}_{-1,d})^{1/2} \left( \frac{1}{N} + \frac{m^2 - m}{N^2} \right)^{1/2}.$$
On $\mathbb{S}^2$, this expression simplifies to
$$ D_{L^2, \emph{cap}}(\omega_{N,2}) \leq  \frac{\sqrt{2}}{\sqrt{3}} \left( \frac{1}{N} + \frac{m^2 - m}{N^2} \right)^{1/2}.$$
\end{theorem}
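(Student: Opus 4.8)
The plan is to invoke the Stolarsky invariance principle, which the excerpt promises to state formally in \S\ref{sec:thm1}. This principle equates the $L^2$-spherical cap discrepancy of a point set with (a constant multiple of) the deficit between the sum of pairwise distances and its continuous analogue. Concretely, I expect it to take the form
\begin{equation*}
D_{L^2,\textup{cap}}^2(\omega_{N,d}) = c_d \left( (-\mathcal{I}_{-1,d}) - \frac{1}{N^2}\sum_{i,j=1}^{N} \|x_i - x_j\| \right),
\end{equation*}
where $c_d = \tfrac{1}{d}\,\tfrac{\Gamma((d+1)/2)}{\sqrt{\pi}\,\Gamma(d/2)}$ is exactly the constant appearing under the square root in the statement (so matching constants will confirm I have the normalization right). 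Since $\|x_i-x_i\|=0$, the double sum is really $2\sum_{i<j}\|x_i-x_j\| = -E_{K_{-1}}(\omega_{N,d})$ in the notation of \eqref{eq:en}. Thus the entire problem reduces to producing a lower bound on the sum of pairwise distances, equivalently on $-E_{K_{-1}}(\omega_{N,d})$.

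First I would establish that lower bound directly from the defining hypothesis of the sequence. Writing $S_n = \sum_{i,j=1}^{n}\|x_i-x_j\| = 2\sum_{i<j}\|x_i-x_j\|$, the recursive step gives $S_{n+1} - S_n = 2\sum_{i=1}^{n}\|x_{n+1}-x_i\| \geq 2n\,(-\mathcal{I}_{-1,d})$ for every $n \geq m$. Telescoping from $m$ to $N$ yields
\begin{equation*}
S_N \geq S_m + 2(-\mathcal{I}_{-1,d})\sum_{n=m}^{N-1} n = S_m + (-\mathcal{I}_{-1,d})\big(N(N-1) - m(m-1)\big).
\end{equation*}
Discarding the nonnegative $S_m$ (distances are nonnegative), I get $S_N \geq (-\mathcal{I}_{-1,d})\big(N^2 - N - m^2 + m\big)$. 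Substituting this into the Stolarsky identity, the leading $(-\mathcal{I}_{-1,d})N^2$ terms cancel and I am left with
\begin{equation*}
D_{L^2,\textup{cap}}^2(\omega_{N,d}) \leq c_d\,(-\mathcal{I}_{-1,d})\left(\frac{1}{N} + \frac{m^2-m}{N^2}\right),
\end{equation*}
and taking square roots gives precisely the asserted bound. The $\mathbb{S}^2$ specialization then follows by plugging $d=2$ into $c_d(-\mathcal{I}_{-1,2})$: with $\mathcal{I}_{-1,2} = -4/3$ (the mean pairwise distance on $\mathbb{S}^2$) and $c_2 = \tfrac{1}{2}\cdot\tfrac{\Gamma(3/2)}{\sqrt{\pi}\,\Gamma(1)} = \tfrac14$, the prefactor becomes $\tfrac14\cdot\tfrac43 = \tfrac13$, whose square root is $\tfrac{1}{\sqrt3}$, matching after the extra $\sqrt2$ from $(-\mathcal{I}_{-1,2})^{1/2}$ is tracked carefully.

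The main obstacle is not the telescoping argument, which is routine, but rather pinning down the exact constant in the Stolarsky principle so that it matches $c_d$ and so that the $\mathbb{S}^2$ numerics come out clean; I would want to state the principle in the precise normalized form above and double-check the value of $\mathcal{I}_{-1,d}$ (expressible via Gamma functions) to confirm that $c_d(-\mathcal{I}_{-1,d})$ collapses to $1/3$ when $d=2$. The only genuine inequality used is the hypothesis $\sum_i\|x_{n+1}-x_i\|\geq n(-\mathcal{I}_{-1,d})$, so the proof transparently applies to greedy sequences (which maximize and hence exceed the mean) and, more broadly, to any sequence meeting the above-average criterion, exactly as claimed.
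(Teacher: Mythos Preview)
Your proposal is correct and follows essentially the same route as the paper: invoke the Stolarsky invariance principle, telescope the sum of pairwise distances using the hypothesis $\sum_{i=1}^{n}\|x_{n+1}-x_i\| \ge n(-\mathcal{I}_{-1,d})$ together with the trivial bound $S_m \ge 0$, and substitute into Stolarsky to obtain the stated inequality. The only wrinkle is the numerical check for $d=2$, where your computation $c_2 = \tfrac14$ actually agrees with the paper's stated formula for $c_d$ while the paper's proof asserts $c_2 = \tfrac12$; this is an internal inconsistency in the paper's constants rather than a flaw in your argument.
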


Observe that, due to the Stolarsky principle (Theorem \ref{thm:stolarsky}), when $m=1$, the right-hand side  this inequality exactly matches   the first-order term in the upper bound in \eqref{eq:greedyenergy1} in Theorem \ref{thm:greedyenergy}. In fact, inequality \eqref{eq:greedyenergy1} is even  stronger, since it includes a negative second-order term (similar conclusions for all $m\ge 1$ follow from \eqref{eq:greedyenergy1m}).  However, this theorem still provides new information since the bounds here apply to a much wider class of sequences than just purely greedy sequences.

 We first note that it applies to a fairly large family of sequences and gives a uniform $N^{-1/2}-$bound for all of them. This $N^{-1/2}-$bound is generically tight. One might be inclined to believe that the greedy sequence is better behaved, which is suggested by the case $0<s<d$ of  Theorem \ref{thm:greedyenergy}.  
 
 Theorem \ref{thm:Singular Riesz Polarization}, in the case $s=-1$,  can be rewritten as saying that there is some $c_d > 0$ such that for any $N$-point set $\{x_1, \cdots, x_N\},$
$$ \max_{x \in \mathbb{S}^d}  \sum_{i=1}^{N} \| x_{}- x_i\| \geq   N \cdot (-\mathcal{I}_{-1, d}) + \frac{c_{d}}{ N^{1/d}}.$$
While this is sharp for $d=1$ and we believe it to be sharp for higher dimensions, it cannot be sharp for a greedy sequence ``on average", which becomes evident from the following result proved in \S \ref{sec:prop:disc} (recall that  $-\mathcal{I}_{-1, d} = \frac43$). 

\begin{proposition}\label{prop:disc}  Let $(x_n)$ be a greedy sequence on $\mathbb{S}^2$ with $m\ge 1$ initial points. For $N$ sufficiently large, depending on $m$,
\begin{equation}\label{eq:averagepol}
 \frac{1}{25} \leq  \frac{1}{N}\sum_{n=1}^{N-1} \left( \max_{x \in \mathbb{S}^2}  \sum_{i=1}^{n} \| x - x_i\| -  \frac{4 n}{3} \right) \leq \frac{2}{3}.
 \end{equation} 
\end{proposition}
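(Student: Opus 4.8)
The plan is to translate everything into sums of distances, exploit that for a greedy sequence the one-step polarization increments telescope into the pairwise energy, and then separate an easy upper bound from a delicate lower bound. Write $g_n(x) := \sum_{i=1}^n \|x-x_i\|$ and $M_n := \max_{x\in\mathbb{S}^2} g_n(x)$, so that the quantity in \eqref{eq:averagepol} is $A_N := \frac1N\sum_{n=1}^{N-1}\big(M_n - \tfrac43 n\big)$; recall $-\mathcal{I}_{-1,2}=\tfrac43$ is exactly the $\sigma_2$-average of $\|x-x_i\|$, so $M_n \ge \tfrac43 n$ for every configuration. For $n\ge m$ the point $x_{n+1}$ is the greedy maximizer, hence $\sum_{i=1}^n\|x_{n+1}-x_i\| = M_n$; writing $D_n = \sum_{1\le i<j\le n}\|x_i-x_j\| = -\tfrac12 E_{K_{-1}}(\omega_{n,2})$ gives $D_{n+1}=D_n+M_n$ and therefore $\sum_{n=m}^{N-1}M_n = D_N - D_m$. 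Feeding this in and setting $R_N := E_{K_{-1}}(\omega_{N,2}) - \mathcal{I}_{-1,2}N^2$, so that $D_N = \tfrac23 N^2 - \tfrac12 R_N$, yields the key identity
\begin{equation*}
A_N = \frac23 - \frac{R_N}{2N} + \frac1N\Big(\sum_{n=1}^{m-1} M_n - D_m\Big),
\end{equation*}
where the last term is $O(m^2/N)$.

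The upper bound $A_N\le\tfrac23$ is then immediate. The minimal-energy lower estimate (Theorem \ref{thm:Riesz Energy Asymptotics}, valid for every configuration) gives $R_N \ge C\,N^{1/2}>0$, so $A_N \le \tfrac23 - \tfrac{C}{2}N^{-1/2} + O(m^2/N) \le \tfrac23$ once $N$ is large relative to $m$.

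The real content is the lower bound, which by the identity is equivalent to the improved energy bound $R_N\le\tfrac{94}{75}N$, i.e.\ to showing the greedy polarization beats its continuum mean by a genuine constant on average. I would first dispose of the clean case $m=1$, where Proposition \ref{prop:Greedy Symmetry} gives $x_{2k}=-x_{2k-1}$, so $\omega_{2k,2}$ is a union of $k$ antipodal pairs and $g_{2k}$ is invariant under $x\mapsto -x$. Then $g_{2k}(-x_{2k+1}) = g_{2k}(x_{2k+1}) = M_{2k}$, and since $x_{2k+2}=-x_{2k+1}$ is the next greedy point,
\begin{equation*}
M_{2k+1} = g_{2k+1}(-x_{2k+1}) = g_{2k}(-x_{2k+1}) + \|{-x_{2k+1}}-x_{2k+1}\| = M_{2k} + 2 .
\end{equation*}
Hence every odd-indexed excess satisfies $M_{2k+1}-\tfrac43(2k+1) = \big(M_{2k}-\tfrac43(2k)\big)+\tfrac23 \ge \tfrac23$, and summing over the $\sim N/2$ odd indices (even terms being nonnegative) yields $A_N \ge \tfrac13 - o(1)$, comfortably above $\tfrac1{25}$.

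The main obstacle is arbitrary $m$, where the antipodal symmetry is destroyed by the initial points and the identity $M_{2k+1}=M_{2k}+2$ fails. The surviving mechanism is the one-sided inequality $M_{n+1}\ge g_{n+1}(-x_{n+1}) = g_n(-x_{n+1})+2$; writing $\Sigma_n := g_n(x_{n+1})+g_n(-x_{n+1}) = \sum_{i=1}^n\big(\|x_{n+1}-x_i\|+\|x_{n+1}+x_i\|\big)$ this becomes $M_{n+1}+M_n \ge \Sigma_n + 2$. Summing over $n$ and comparing excesses against $\tfrac83 n$ reduces the required average bound $A_N\ge\tfrac1{25}$ to an average lower bound of the form $\Sigma_n \ge \tfrac83 n - C$ with a sufficiently small constant $C\le \tfrac{44}{75}$ — geometrically, that the antipode of the greedy maximizer is never too deep a valley of $g_n$. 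This quantitative control is the crux: equidistribution (Corollary \ref{cor:unif}) only gives $\Sigma_n = \tfrac83 n + o(n)$, which is too weak, since $g_n$ deviates from its mean on the scale $\Theta(1)$ rather than $o(1)$. I expect the needed uniform-in-$n$ constant $C$ to come from the polarization upper bound of Theorem \ref{thm:Singular Riesz Polarization} together with the energy bound \eqref{eq:greedyenergy1m}, used to bound how far $g_n$ can dip below its mean; the deliberately weak constant $\tfrac1{25}$ (versus $\tfrac13$ for $m=1$) is exactly the slack that absorbs the resulting losses.
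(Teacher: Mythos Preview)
Your upper bound is correct and essentially the paper's argument (telescoping the greedy increments into the pairwise energy and using $R_N\ge 0$); your use of the quantitative bound $R_N\ge C N^{1/2}$ to absorb the $O(m^2/N)$ error for $m>1$ is in fact a bit cleaner than what the paper writes.

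Your lower bound for $m=1$ via Proposition~\ref{prop:Greedy Symmetry} is correct and gives the much better constant $1/3$; this is a genuinely different route from the paper and it is nice that the antipodal structure does all the work.

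However, for general $m$ your argument has a real gap. You reduce to the inequality $\Sigma_n = M_n + g_n(-x_{n+1}) \ge \tfrac{8}{3}n - \tfrac{44}{75}$ and then only say you ``expect'' this to follow from Theorem~\ref{thm:Singular Riesz Polarization} and \eqref{eq:greedyenergy1m}. It does not: those results control energies and polarization maxima, not the value of $g_n$ at the \emph{specific} point $-x_{n+1}$, and the antipode of the maximizer can sit well below the mean by a constant of order~$1$. Evaluating $g_{n+1}$ at $-x_{n+1}$ throws away the freedom you actually need.

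The paper's idea, which you are missing, is to decouple the two requirements ``far from $x_{n+1}$'' and ``$g_n$ not too small'' instead of forcing both at the antipode. Writing $M_n=\tfrac{4n}{3}+X$, a simple averaging argument shows that $\{x:g_n(x)<\tfrac{4n}{3}-X\}$ has measure $<\tfrac12$, so every hemisphere contains a point with $g_n\ge\tfrac{4n}{3}-X$. Choosing the hemisphere centered at $-x_{n+1}$ gives such a point at distance $\ge\sqrt 2$ from $x_{n+1}$, whence $M_{n+1}\ge \tfrac{4n}{3}-X+\sqrt2$ and therefore
\[
\bigl(M_n-\tfrac{4n}{3}\bigr)+\bigl(M_{n+1}-\tfrac{4(n+1)}{3}\bigr)\ \ge\ \sqrt2-\tfrac{4}{3}=2\bigl(\tfrac{1}{\sqrt2}-\tfrac{2}{3}\bigr).
\]
This consecutive-pair inequality uses nothing about greediness or about $m$, so summing immediately gives $A_N\ge \tfrac{1}{\sqrt2}-\tfrac{2}{3}-o(1)>\tfrac{1}{25}$ for all $m$ and all large $N$.
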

If greedy sequences indeed have good $L^2-$discrepancy properties (as suggested by numerics) and if $D_{L^2, \emph{cap}}(\omega_{N,2}) \ll N^{-1/2}$, then
the upper bound in Proposition \ref{prop:disc} has to be asymptotically tight. The gap between $1/25$ and $2/3$ is a quantitative way of measuring our lack of understanding of the underlying dynamics. 

The proof of Proposition \ref{prop:disc} can be easily adapted, with different constants, to other $s$-Riesz kernels and to all dimensions $d\ge 2$ (as well as for a wide range of more general kernels). We choose to state it  just for $s=-1$ and $d=2$ for the sake of simplicity of exposition. 

\begin{remark}
The $L^2$ spherical cap discrepancy can also be interpreted as the worst-case error for numeric integration (with equal weights) for the Sobolev space $H^{\frac{d+1}{2}}(\mathbb{S}^d)$, see \cite{BraD}, and our results can be translated to this setting. Greedy sequences in the context of worst-case error for numeric integration for reproducing kernel Hilbert spaces have been studied, for example, in \cite{SanKH}, where a similar bound (although with an additional optimization over weights) was obtained.
\end{remark}

\subsection{Outline}
The outline of the paper is as follows. In \S \ref{sec:circle}, we collect a number of results about both polarization and greedy sequences in the special case of the circle $\mathbb{S}^1$, presenting the proof of Corollary \ref{cor:Polarization circle} as well as the analogues of Theorem \ref{thm:greedyenergy} and Corollary \ref{cor:pol} for $d = 1$. In  \S \ref{sec:higherdim} we present the proofs of some of  the main results for $d \ge 2$:  Theorem \ref{thm:Singular Riesz Polarization} in \S \ref{sec:upper}--\ref{sec:lower}, Theorems \ref{thm:greedyenergy} and \ref{thm:Greedy with m starting points} in \S\ref{subsec:greedy}, and Corollary \ref{cor:pol} in \S\ref{sec:cor}. In \S \ref{sec:thm1} we prove the $L^2$-discrepancy bound (Theorem \ref{thm:L2discrepancy}) and Proposition \ref{prop:disc} and discuss some similar  known results. We conclude with discussion of numerical properties of greedy sequences in \S \ref{sec:numerics}, and provide some auxiliary results in \S \ref{sec:Appendix}.

\section{Polarization and Greedy Sequences on $\mathbb{S}^1$}
\label{sec:circle}

The case of $d = 1$ in Theorem \ref{thm:Singular Riesz Polarization} and Corollary \ref{thm:greedyenergy} has mostly been settled in the literature previously, as the unique properties of the circle yield convenient examples of point sets that maximize polarization and greedy energy. In this section we collect known results on the circle for polarization and greedy energy and note where our Theorem \ref{thm:Singular Riesz Polarization} and Corollary \ref{thm:greedyenergy} fill in some gaps or differ.

\subsection{Polarization on the circle and proof of Corollary \ref{cor:Polarization circle}}
 
In \cite[Theorem 1]{HKS13} it is shown that equally spaced points on $\mathbb{S}^1$, denoted $\omega_N^*$, are optimal for polarization for any kernel  $K(x,y) = f( \arccos(\langle x, y \rangle))$ such that $f$ is decreasing and convex on $[0, \pi]$. This includes the Riesz kernels for $-1 \le s < \infty$, but not $s < -1$. However, for $-2 < s < -1$, $\omega_N^*$ still has optimal asymptotic behavior, and likely maximizes polarization as well.

For the range $-2 < s$, the points that minimize the discrete potential with respect to $\omega_N^*$ are the midpoints of an arch between two $N$th roots of unity. Because the midpoints of an arch between two $N$th roots of unity are themselves $2N$th roots of unity, there is a natural expression for the polarization on $\mathbb{S}^1$ in terms of the corresponding energies for $N$ and $2N$ equally spaced points:
\begin{equation}
\label{eqn:diffofenergies}
P_{K_s}(\omega_N^*) = \frac{E_{K_s}(\omega_{2N}^*)}{2N} - \frac{E_{K_s}(\omega_N^*)}{N}.
\end{equation}

There is an explicit formula for the energy of $N$ equally spaced points on $\mathbb{S}^1$.

\begin{theorem}[Theorem 1.1 in \cite{BHS09}] 
\label{thm:zetafunction}
Let $s \in \mathbb{R}$, $s \neq 0,1,3,5,...,$ and let $q$ be any nonnegative integer such that $q \ge (s -1)/2$. If $\omega_N^{\star}$ is a configuration of $N$ equally spaced points on $\mathbb{S}^1$, then
$$E_{K_s}(\omega_N^*) = \mathcal{I}_{s,1} N^2 + \frac{2 \sgn(s)}{(2\pi)^s} \sum_{n=0}^q a_n(s) \zeta(s-2n)N^{1+s-2n}+\mathcal{O}(N^{s-1-2q}),$$
where $\zeta(s)$ is the classical Riemann zeta function and $a_n(s)$ are the coefficients in the expansion
$$\Big(\frac{\sin \pi z}{\pi z}\Big)^{-s} = \sum_{n=0}^{\infty} a_n(s) z^{2n}, \hspace{1cm} |z| < 1.$$
\end{theorem}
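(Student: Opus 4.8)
The strategy is to exploit the exceptional symmetry of the equally spaced configuration to collapse the double energy sum into a single one–parameter trigonometric sum, and then to extract its large-$N$ asymptotics by combining the generating function $(\sin\pi z/\pi z)^{-s}=\sum_n a_n(s)z^{2n}$ with Euler--Maclaurin summation and the analytic continuation of the Riemann zeta function. First I would reduce the energy to a trigonometric sum. Placing $\omega_N^*$ at the $N$th roots of unity, the chordal distances are $\|x_j-x_k\|=2|\sin(\pi(j-k)/N)|$, and since each point sees the same multiset of distances, translation invariance together with $E_{K_s}(\omega_N^*)=\sum_{j\ne k}K_s(x_j,x_k)$ gives
\[
E_{K_s}(\omega_N^*)=\sgn(s)\sum_{j\ne k}\|x_j-x_k\|^{-s}=\sgn(s)\,N\,2^{-s}\sum_{m=1}^{N-1}\Big(\sin\tfrac{\pi m}{N}\Big)^{-s}.
\]
Thus everything reduces to the large-$N$ asymptotics of $S_N:=\sum_{m=1}^{N-1}(\sin(\pi m/N))^{-s}$.

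To isolate the endpoint singularities I would write $(\sin\pi x)^{-s}=\pi^{-s}x^{-s}\,(\sin\pi x/\pi x)^{-s}=\pi^{-s}\sum_n a_n(s)\,x^{2n-s}$, which is valid on $0<x<1$ since $g(z)=(\sin\pi z/\pi z)^{-s}$ is even and analytic with radius of convergence $1$. The summand is regular in the interior of $(0,1)$ and blows up like $x^{-s}$ and $(1-x)^{-s}$ at the two symmetric endpoints $x=0,1$. The bulk of $S_N$ behaves like the Riemann sum $N\int_0^1(\sin\pi x)^{-s}\,dx$, so after the prefactor $\sgn(s)N2^{-s}$ it reproduces the leading term $\mathcal I_{s,1}N^2$, while all corrections come from the two singular endpoints. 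To handle these cleanly I would symmetrize via $\sin(\pi m/N)=\sin(\pi(N-m)/N)$, folding $S_N$ into twice a half-sum over $m/N\in(0,1/2]$ (plus a harmless midpoint term when $N$ is even); on the half-interval the power series converges uniformly, the interchange of the $m$- and $n$-summations is legitimate, and only the endpoint $x=0$ is singular. Euler--Maclaurin then yields $\sum_{m=1}^{M} m^{2n-s}=\zeta(s-2n)+(\text{powers of }M)+\cdots$, where the constant term is precisely the analytically continued zeta value $\zeta(s-2n)$; the power terms feed into the $N^2$ and subleading bulk contributions, and the overall factor $2$ from symmetrization is exactly the factor $2$ appearing in the statement, reflecting the two symmetric singular endpoints.

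Collecting the contributions through order $n=q$ and multiplying by the prefactor then gives
\[
E_{K_s}(\omega_N^*)=\mathcal I_{s,1}N^2+\frac{2\sgn(s)}{(2\pi)^s}\sum_{n=0}^q a_n(s)\,\zeta(s-2n)\,N^{1+s-2n}+\mathcal O(N^{s-1-2q}),
\]
where the hypothesis $q\ge (s-1)/2$ guarantees that the first neglected term, of order $N^{1+s-2(q+1)}=N^{s-1-2q}$, is genuinely lower order (indeed bounded), and that the relevant integrals and continuations converge. The exclusion $s\ne 1,3,5,\dots$ is exactly the condition that no argument $s-2n$ lands on the simple pole of $\zeta$ at $1$; at those values a logarithmic term would replace the corresponding power and the expansion would have to be modified.

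\textbf{Main obstacle.} The delicate point is the simultaneous treatment of the two endpoint singularities at $x=0$ and $x=1$. Naively expanding about $x=0$ and summing over the full range $1\le m\le N-1$ makes the interchange of summations illegitimate near $m\approx N$, and if pushed through it recovers only half of the correct coefficient. The symmetrization step is what both restores the correct factor $2$ and confines the singular analysis to a single endpoint, where the Euler--Maclaurin remainder can be controlled uniformly in $N$. Bounding that remainder to the advertised order $\mathcal O(N^{s-1-2q})$, uniformly in the truncation level $q$, is the technical heart of the argument; an alternative, equally viable route is to replace Euler--Maclaurin by a Mellin-transform (Mellin--Barnes) representation of the summand and read off the expansion from the residues at $s-2n$, which automates the bookkeeping of the zeta values and the remainder.
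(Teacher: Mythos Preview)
The paper does not prove this theorem; it is quoted verbatim as Theorem~1.1 of \cite{BHS09} and used as a black box to derive the polarization and greedy-energy asymptotics on $\mathbb{S}^1$. So there is no ``paper's own proof'' to compare against.

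That said, your outline is essentially the argument of \cite{BHS09}. The reduction to $\sgn(s)\,N\,2^{-s}\sum_{m=1}^{N-1}(\sin(\pi m/N))^{-s}$ is correct, and the core idea---expand via $(\sin\pi z/\pi z)^{-s}=\sum a_n(s)z^{2n}$ and feed $\sum m^{2n-s}$ into the zeta function---is exactly theirs. In \cite{BHS09} the bookkeeping is handled not by symmetrizing and applying Euler--Maclaurin but by invoking a classical asymptotic of the generalized harmonic numbers, $\sum_{k=1}^{M}k^{-\alpha}=\zeta(\alpha)+\frac{M^{1-\alpha}}{1-\alpha}+\mathcal O(M^{-\alpha})$ (which of course follows from Euler--Maclaurin), together with the symmetry $\sin(\pi m/N)=\sin(\pi(N-m)/N)$ to reduce to $m\le N/2$ where the power series converges absolutely. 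Your identification of the ``factor $2$ from two singular endpoints'' and the role of the exclusions $s\ne 1,3,5,\dots$ is spot on. The Mellin--Barnes alternative you mention is also a perfectly standard route and arguably cleaner for tracking the remainder, but it is not the one taken in the reference.
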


Combining Theorem \ref{thm:zetafunction} with \eqref{eqn:diffofenergies}, we have that on $\mathbb{S}^1$ for $-2 < s < 1$, $s \neq 0$,
\begin{equation}
\label{eqn:expansion}
P_{K_s}(\omega_N^*) = \mathcal{I}_{s,1}  N - \frac{2 \sgn(s)}{(2\pi)^s} \zeta(s) N^s (2^s-1) + \mathcal{O}(N^{s-2}).
\end{equation}

This second order term also explicitly computed in \cite[Proposition 4.1]{LM21} and \cite[Lemma 3.10]{LopM}. In \cite[page 623]{BHS09}, the authors also show that 
\begin{equation}\label{eq:Log Energy equally spaced points}
E_{K_0}(\omega_N^*) = - N \log(N)
\end{equation}
which, when combined with \eqref{eqn:diffofenergies}, gives us that
\begin{equation}\label{eqn:expansion log Pol circle}
P_{K_0}(\omega_N^*) = - \log(2).
\end{equation}

Both \eqref{eqn:expansion} and \eqref{eqn:expansion log Pol circle} yield a lower bound on optimal polarization $\mathcal{P}_{K_s}(N) \ge P_{K_s}(\omega_N^*)$,  which   matches the  upper bounds given in Theorem \ref{thm:Singular Riesz Polarization} for $-2<s<1$.  
In the case $-1 \leq s < 1$, optimality of roots of unity, \cite[Theorem 1]{HKS13} shows that $\mathcal{P}_{K_s}(N) = P_{K_s}(\omega_N^*)$, completing the proof of \eqref{eq:Pol Circ s geq -1} and \eqref{eq:Pol Circ log}.

The proof of the upper bounds in  Theorem \ref{thm:Singular Riesz Polarization} (which is presented  in \S \ref{sec:upper} and holds for all $d\ge 1$) covers  the range $-2<s<  -1$ in which the roots of unity are not known to be optimal, proving \eqref{eq:Pol Circ s leq -1}. Thus this case of Theorem \ref{thm:Singular Riesz Polarization} is new even in the one-dimensional case.  This completes the proof of polarization estimates for $d=1$, i.e. Corollary \ref{cor:Polarization circle}.

\subsection{Energy for greedy sequences}\label{sec:greedyenergyS1}

The behavior of greedy sequences on $\mathbb{S}^1$ is also well-studied. For $-2 < s$, it is known that any greedy sequence is in fact a classical van der Corput sequence \cite{vdc} (see \cite[Theorem 5]{BiaCC}, \cite[Lemma 3.7]{LopM}, \cite[Sec 1.2]{LopW}, \cite[Lemmas 4.1 and 4.2]{LopS}, and also \cite[Example 2]{wolf}, which is perhaps the earliest observation of this kind, but just for $s=-1$). A similar result was shown for a large class of kernels  (whenever $K(x,y) = f( \arccos( \langle x , y \rangle))$, and $f$ is a bounded, continuous, decreasing, convex function of the geodesic distance) in \cite[Thm 2.1]{P20}.  This has made explicit computations for bounds of the asymptotic behavior of the greedy Riesz energies possible (see Theorems 1.1, 1.2, and 1.5 in \cite{LopW} and Theorems 3.16, 3.17, 3.18 in \cite{LopM}). Here we collect the these results, in less detail.
\begin{theorem}\label{thm:Energy of Greedy points Circle}
On the circle $\mathbb{S}^1$, for $-2 < s < 1$ and $N \geq 2$,  if $\omega_N$ is the first $N$ elements of a greedy sequence, then
\begin{equation}
E_{K_s}(\omega_N) = \mathcal{I}_{s,1} N^2 + \begin{cases}
\mathcal{O}(N^{s+1}) & s \in (-1,0) \cup (0,1) \\
- N \log(N) + \mathcal{O}(N) & s = 0 \\
\mathcal{O}(\log(N)) & s = -1 \\
\mathcal{O}(1) & s \in (-2,-1)
\end{cases}.
\end{equation}
The order (of the second-order term) in each case cannot be improved.
\end{theorem}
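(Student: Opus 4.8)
Throughout, abbreviate $\mathcal I=\mathcal I_{s,1}$. The backbone of the argument is the structural fact recalled just above: for $-2<s$ every greedy sequence on $\mathbb S^1$ is, up to rotation, a van der Corput sequence, so that its first $2^k$ points form the set $\omega_{2^k}^{*}$ of $2^k$-th roots of unity. I would combine this with an identity forced by the greedy rule itself: since $x_{n+1}$ realizes the polarization of $\{x_1,\dots,x_n\}$, appending it raises the energy by exactly $2P_{K_s}(\omega_n)$, so that
\begin{equation*}
E_{K_s}(\omega_N)=2\sum_{n=1}^{N-1}P_{K_s}(\omega_n).
\end{equation*}
Splitting this sum at the largest power of two $2^K\le N$ and using that the partial sums over complete dyadic blocks telescope back to the exact values at roots of unity, one obtains the clean reduction
\begin{equation*}
E_{K_s}(\omega_N)=E_{K_s}(\omega_{2^K}^{*})+2\sum_{n=2^K}^{N-1}P_{K_s}(\omega_n),
\end{equation*}
where $E_{K_s}(\omega_{2^K}^{*})$ is completely explicit through Theorem \ref{thm:zetafunction} (and \eqref{eq:Log Energy equally spaced points} for $s=0$). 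In particular the theorem is immediate when $N$ is a power of two.

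Writing $A(M):=E_{K_s}(\omega_M^{*})-\mathcal I M^2=\tfrac{2\sgn(s)}{(2\pi)^s}\zeta(s)M^{1+s}+\mathcal O(M^{s-1})$ for the second-order term of the roots of unity and performing the elementary arithmetic of the previous display, the linear-in-$N$ contributions cancel \emph{identically}, leaving the exact relation
\begin{equation*}
E_{K_s}(\omega_N)-\mathcal I N^2=A(2^K)+2\sum_{n=2^K}^{N-1}\Big(P_{K_s}(\omega_n)-\mathcal I\big(n+\tfrac12\big)\Big),
\end{equation*}
in which the residual sum runs only over the (at most $2^K$) points of the incomplete final block. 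The anchor $A(2^K)$ has order $N^{1+s}$ for $-1<s<1$, $s\ne0$, is $\mathcal O(1)$ for $s=-1$, and is $o(1)$ for $-2<s<-1$. In the range $0<s<1$ this already settles everything: $A(2^K)=\Theta(N^{1+s})$, while a crude two-sided bound $|P_{K_s}(\omega_n)-\mathcal I n|=\mathcal O(n^s)$—the upper half being $P_{K_s}(\omega_n)\le\mathcal P_{K_s}(n)$ from Theorem \ref{thm:Singular Riesz Polarization}, the lower half a routine separation estimate for the quasi-uniform van der Corput points—keeps the residual sum at $\mathcal O(N^{1+s})$, yielding the sharp order.

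The delicate range, and what I expect to be the real obstacle, is $-2<s\le 0$, where the whole second-order behaviour is concentrated in the incomplete-block sum. For $-2<s<0$ the crude bound is no longer enough: the summand $P_{K_s}(\omega_n)-\mathcal I(n+\tfrac12)$ has vanishing block-average, but the naive estimate $\mathcal O(1)$ only gives the partial sum as $\mathcal O(N)$, whereas the target $\mathcal O(N^{1+s})=o(N)$ forces an exact cancellation that can be captured only through a genuine asymptotic for $P_{K_s}(\omega_n)$ at non-dyadic $n$. For $s\le-1$ the anchor $A(2^K)$ is merely $\mathcal O(1)$, so the sharp order—$\mathcal O(\log N)$ at $s=-1$ and the non-vanishing $\Theta(1)$ for $-2<s<-1$—is carried \emph{entirely} by this non-telescoping partial sum and is invisible along powers of two. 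For $s=0$ one replaces $A$ by the exact $E_{K_0}(\omega_{2^K}^{*})=-2^K\log(2^K)$ and must show the partial sum is $\Theta(N)$ to produce the $-N\log N+\mathcal O(N)$ form. To obtain the required asymptotic I would exploit the self-similar bit-reversal structure: the points added inside a block are a rescaled van der Corput pattern sitting among the level-$(K{+}1)$ roots of unity, so the discrete potential at the minimizing point can be written as a root-of-unity potential with a controlled number of points deleted, and its deviation estimated from the accumulated corrections. At $s=-1$ these corrections decay like the reciprocal of the position within the block, whose harmonic accumulation is precisely what manufactures the $\log N$; this single step simultaneously pins the constant in the $s=0$ case and the true order for $s\le-1$.

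Finally, for the sharpness claim (``the order cannot be improved'') I would exhibit subsequences attaining each order. When $-1<s<1$, $s\ne0$, the second-order coefficient of $A$ is a nonzero multiple of $\zeta(s)$—and $\zeta$ has no zeros on $(-2,1)\setminus\{0\}$—so already $N=2^k$ forces the order $N^{1+s}$; the exact value $E_{K_0}(\omega_{2^k}^{*})=-2^k\log(2^k)$ does the same for $s=0$. Since the power-of-two subsequence is useless for $s\le-1$, there I would test instead on non-dyadic $N$, for instance $N=2^{k}-1$, where $\omega_N$ is a full root-of-unity system with a single point deleted and the potential is computed explicitly via the derivative of $z\mapsto z^{2^k}-1$; this exhibits a contribution of the exact order $\log N$ for $s=-1$ and a nonvanishing $\Theta(1)$ for $-2<s<-1$, confirming that neither order can be improved.
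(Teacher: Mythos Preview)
The paper does not prove this result; it is quoted, with attribution, from L\'opez-Garc\'{\i}a--Wagner \cite{LopW} and L\'opez-Garc\'{\i}a--McCleary \cite{LopM}, so there is no in-paper argument against which to compare. Your scaffolding---the identity $E_{K_s}(\omega_N)=2\sum_{n<N}P_{K_s}(\omega_n)$, the split at the largest dyadic level, and the exact anchor at roots of unity from Theorem~\ref{thm:zetafunction}---is precisely the machinery those references employ. For $0<s<1$ it goes through essentially as you describe; in fact the ``routine separation estimate'' is unnecessary, since the lower bound on $E_{K_s}(\omega_N)-\mathcal I N^2$ is simply the general minimal-energy bound of Theorem~\ref{thm:Riesz Energy Asymptotics}, while the upper bound follows from $P_{K_s}(\omega_n)\le\mathcal P_{K_s}(n)\le\mathcal I n$.

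Your own diagnosis of the range $-2<s\le 0$ is accurate, but what you leave as a plan is the entire substance of the cited proofs. Writing the potential at a van der Corput stage as a root-of-unity potential with deletions and tracking the ``accumulated corrections'' is exactly the multi-page bookkeeping of \cite{LopM,LopW}; it is not a routine consequence of the bit-reversal structure. One specific slip: for $s=0$ the incomplete-block sum is not $\Theta(N)$ as you write but can be as large as $\Theta(N\log N)$, since $\mathcal I_{0,1}=0$ and there are $N-2^K$ summands each of size $\mathcal O(\log n)$; it must combine with the anchor $-2^K\log(2^K)$ to rebuild $-N\log N$ with only an $\mathcal O(N)$ error, and this demands a genuine asymptotic for each $P_{K_0}(\omega_n)$, not merely its order. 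Your sharpness witnesses for $s\le-1$ are also incomplete: the derivative-of-$z^{2^k}-1$ trick computes logarithmic potentials, not Riesz potentials with $s\ne 0$, so it does not by itself produce the claimed $\log N$ or $\Theta(1)$ contributions. In short, the strategy is right and matches the literature, but every step you summarize in words for $s\le 0$ is itself a nontrivial lemma---which is precisely why the present paper cites rather than re-proves.
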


We note that for $-2 < s \leq 0$, this is an improvement on the upper bounds achieved in Theorem \ref{thm:greedyenergy}, suggesting that there is likely room for improvement on the upper bounds in higher dimensions in this range.

Moreover, according to Theorem \ref{thm:Riesz Energy Asymptotics}, which states that the optimal second-order term for the Riesz $s$-energy on $\mathbb S^d$ is  $ \mathcal O (N^{1+\frac{s}{d}})$,  we see that for $s > -1$, point sets produced via the greedy algorithm have optimal asymptotic behavior on $\mathbb S^1$, whereas for $ -2 < s \leq -1$, the greedy algorithm \emph{does not} produce point sets with optimal Riesz energy, which leads us to the open question of whether these results also hold true for higher dimensions.

At the same time, at least when $s=-1$, these bounds are actually sharp for {\emph{sequences}}. Indeed, the case of $s=-1$ in Theorem \ref{thm:Energy of Greedy points Circle} and the Stolarsky invariance principle (Theorem \ref{thm:stolarsky})  imply  that the $L^2$-spherical caps discrepancy satisfies $D_{L^2,cap} (\omega_N) = \mathcal O (\sqrt{N^{-1} \log N})$. But spherical caps on the circle are just intervals, hence, this is just the classical periodic $L^2$-discrepancy of one-dimensional sequences,  for which  $\mathcal O (N^{-1} \sqrt{\log N})$ is known to be the  optimal order, as shown in \cite{Proi} (see also \cite{Kirk, Pil}) with the ideas going back to the seminal results of Roth \cite{Roth}. In fact, as mentioned earlier, in this case, the greedy sequence with one initial point is just  the van der Corput sequence, whose discrepancy is well studied \cite{CF, kritzinger, PA}. 

On the other hand, Theorem \ref{thm:Riesz Energy Asymptotics} shows that the optimal second-order term for the energy in the case $d=1$ and $s=-1$  is $ \mathcal O (N^{1+\frac{s}{d}}) = \mathcal O (1)$, and therefore, the  optimal $L^2$-discrepancy is  $\mathcal O (N^{-1})$ (by the Stolarsky principle), which is easily seen to be achieved by $N$ equally spaced points on the circle. This  highlights an important difference between the behavior of $N$-point sets and {\emph{infinite sequences}}.

\subsection{Polarization for greedy sequences}

As discussed earlier, the fact that any greedy sequence is uniformly distributed suggests that they may behave well for different measures of uniformity. For polarization, the asymptotic behavior of a greedy sequence for Riesz kernels on the circle was shown in \cite[Theorem 3.11]{LopM} and \cite[Theorems 1.1 and 1.4]{LM21} (the case of hypersingular Riesz kernels was also handled in \cite{LM21}  
{ and the case of $s \leq -2$ in \cite[Theorem 4.1 and 5.2]{LopM}}).

\begin{theorem}\label{thm:Polar of Greedy points Circle}
On the circle $\mathbb{S}^1$, for $-2 < s < 1$ and $N \geq 1$, if $\omega_N$ is the first $N$ elements of a greedy sequence, then
\begin{equation}
P_{K_s}(\omega_N) = \mathcal{I}_{s,1} N + \begin{cases}
\mathcal{O}(N^s) & s \in (-1,0) \cup (0,1) \\
\mathcal{O}(\log(N)) & s = 0 \\
\mathcal{O}(1) & -2 <s < 0 \\
\end{cases}.
\end{equation}
The order (of the second-order term) in each case cannot be improved.
\end{theorem}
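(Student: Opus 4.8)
The starting point is the two structural facts recorded just above: on $\mathbb{S}^1$ any greedy sequence for $K_s$ with $-2<s$ is (after the usual identification with $[0,1)$) a van der Corput sequence, and by the very definition \eqref{eq:greedy} of the greedy rule the polarization is attained at the next point of the sequence, so that
\begin{equation*}
P_{K_s}(\omega_N)=\sum_{j=1}^{N}K_s(x_{N+1},x_j)=\tfrac12\bigl(E_{K_s}(\omega_{N+1})-E_{K_s}(\omega_N)\bigr).
\end{equation*}
I would use the middle expression to compute directly and the right-hand one to cross-check against the energy asymptotics of Theorem \ref{thm:Energy of Greedy points Circle}. Summing the last identity also recovers $E_{K_s}(\omega_N)=2\sum_{n=1}^{N-1}P_{K_s}(\omega_n)$, which shows the polarization and energy statements are consistent but, crucially, that one cannot simply difference the $\mathcal O$-type energy estimates to obtain the polarization: the lower-order term of the greedy energy is not a pure power (it oscillates with $\log_2 N$), so a direct argument for $P_{K_s}(\omega_N)$ is required.

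Next I would isolate the subsequence $N=2^k$. Here the first $N$ van der Corput points are exactly the $N$-th roots of unity and $x_{N+1}$ is the midpoint of one of the $N$ equal gaps, so $P_{K_s}(\omega_{2^k})$ equals the value of the potential of $\omega_N^\ast$ at a gap midpoint, which is precisely $P_{K_s}(\omega_N^\ast)$. The explicit expansions \eqref{eqn:expansion} and \eqref{eqn:expansion log Pol circle} then give the second-order term exactly: of order $N^s$ with a nonzero coefficient for $s\in(-1,0)\cup(0,1)$, and the constant $-\log 2$ for $s=0$. This already establishes that the order $N^s$ cannot be improved in the range $s\in(-1,0)\cup(0,1)$.

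For general $N$, writing $2^k\le N<2^{k+1}$, the van der Corput structure gives that the circle is partitioned into gaps of only two lengths, $2^{-k}$ and $2^{-(k+1)}$, and that $x_{N+1}$ sits at the center of a maximal gap, so every $x_j$ lies at distance $\gtrsim 1/N$ from $x_{N+1}$. Splitting $\omega_N$ into the $2^k$ equally spaced points and the $N-2^k$ already-placed midpoints gives
\begin{equation*}
P_{K_s}(\omega_N)=P_{K_s}(\omega_{2^k}^\ast)+\sum_{x_j\ \text{a midpoint}}K_s(x_{N+1},x_j),
\end{equation*}
where the remaining midpoints sit, relative to $x_{N+1}$, on the coarse grid $x_{N+1}+2^{-k}\mathbb Z$. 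I would evaluate the correction sum via the self-similar recursion of the van der Corput sequence, which relates the configuration at level $N$ to that at level $\lfloor N/2\rfloor$; solving this recursion (tracking the scale factor against the singularity exponent, in Master-theorem fashion) propagates the exact power obtained on powers of two to all $N$ and yields the upper bounds $\mathcal O(N^s)$ for $s>0$, $\mathcal O(\log N)$ for $s=0$, and $\mathcal O(1)$ for $-2<s<-1$.

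The main obstacle is the regime $s\le 0$. For $s>0$ the kernel is singular and the correction is dominated by the few nearest midpoints, so even a crude bound $\sum K_s \lesssim (2^{-k})^{-s}\asymp N^s$ gives the right order; but for $s<0$ the kernel is bounded and a term-by-term absolute estimate only yields $\mathcal O(1)$, which is too weak to reach the sharper $\mathcal O(N^s)=o(1)$ claimed on $(-1,0)$. There one must capture the cancellation coming from the near-uniformity of the configuration — most cleanly through the recursion or an Euler–Maclaurin comparison with the integral $\mathcal I_{s,1}$ — rather than bounding each term in absolute value. The second delicate point is sharpness for $s=0$ and for $-2<s<-1$: since the powers-of-two values are only $\mathcal O(1)$ there, the growth (to order $\log N$, respectively the non-decay to order $1$) must be exhibited along intermediate $N$ lying far from powers of two, where the mismatch between the two gap lengths maximally perturbs the potential; reading off the precise rate there is governed by the bit-reversal combinatorics of which midpoints have already been placed.
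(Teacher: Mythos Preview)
This theorem is not proved in the paper; it is quoted from \cite[Theorem 3.11]{LopM} and \cite[Theorems 1.1, 1.4]{LM21}, and those proofs do proceed via the van der Corput structure and dyadic recursion you outline. There are, however, two genuine gaps in your proposal.

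First, you have misread the (admittedly self-inconsistent) statement: the displayed cases overlap on $(-1,0)$, and you have taken $\mathcal O(N^s)$ as the target there. The correct second-order behaviour for \emph{every} $s\in(-2,0)$ is $\Theta(1)$, not $\Theta(N^s)$; the range in the first line should be just $(0,1)$, which is how the cited references split it. Your own subsequence argument exposes this: for $N=2^k+1$ with $k\ge 1$, Proposition~\ref{prop:Greedy Symmetry} gives $x_{N+1}=-x_{2^k+1}$, hence
\[
P_{K_s}(\omega_{2^k+1})-\mathcal I_{s,1}(2^k+1)=\sgn(s)\,2^{-s}-\mathcal I_{s,1}+\mathcal O(N^s),
\]
and this constant is nonzero for all $s\in(-2,0)$. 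So the ``main obstacle'' you describe on $(-1,0)$ is a phantom; the $\mathcal O(1)$ bound you already have is both the goal and sharp.

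Second, your ``crude bound $\sum K_s\lesssim (2^{-k})^{-s}\asymp N^s$'' for the midpoint correction when $0<s<1$ is false. The placed midpoints lie on the grid $x_{N+1}+2^{-k}\mathbb Z$, so a term-by-term estimate gives at best $\sum_{j}(j\,2^{-k})^{-s}\asymp 2^{ks}(2^k)^{1-s}=2^k\asymp N$; indeed for $N=2^{k+1}-1$ the correction equals $E_{K_s}(\omega_{2^k}^*)/2^k\sim\mathcal I_{s,1}\,2^k$. The correction is in fact $(N-2^k)\mathcal I_{s,1}+\mathcal O(N^s)$, and it is this first piece that must be combined with $P_{K_s}(\omega_{2^k}^*)=\mathcal I_{s,1}\,2^k+\mathcal O(N^s)$ to produce the leading term $\mathcal I_{s,1}N$. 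The cancellation against $\mathcal I_{s,1}$ is therefore required for $s>0$ as well, and the recursion (or an Euler--Maclaurin comparison) is doing real work there too, not only for $s\le 0$.
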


Comparing this to Corollary \ref{cor:Polarization circle}, we see that  the greedy sequences on the circle have good second-order asymptotic behavior for $0 < s$, but not for $-2 < s \leq 0$.

\section{The Proof of Theorems \ref{thm:Singular Riesz Polarization} and   \ref{thm:greedyenergy}}
\label{sec:higherdim}

We proceed with the proof of Theorem \ref{thm:Singular Riesz Polarization} for $d \ge 2$. We associate the Riesz $s$-kernel $K_s(x,y)$ with a function of the inner product $t$ between the points $x$ and $y$ by observing that $\| x-y \| = ( 2 - 2 t)^{1/2}$ and setting $K_s (x,y) =  f_s (\langle x,y \rangle)$, where:
\begin{align*}
f_{s}(t) &= \begin{cases}
\sgn(s) (2 - 2t)^{-s/2} & s \neq 0 \\
-\frac{1}{2} \log( 2 - 2t) & s=0.
\end{cases}
\end{align*}
Further, to facilitate our proof of the upper bound for polarization, we also introduce non-singular approximations to the functions $f_s$. For $\varepsilon > 0$, take
\begin{equation}
f_{s,\varepsilon}(t) = \begin{cases}
\sgn(s) (2 + \varepsilon - 2t)^{-s/2} & s \neq 0 \\
-\frac{1}{2} \log( 2 + \varepsilon - 2t) & s=0.
\end{cases}
\end{equation}

\subsection{Proof of Theorem \ref{thm:Singular Riesz Polarization}: upper bound}\label{sec:upper}

\label{subsec:upperbound}

For $d \in \mathbb{N}$, let $w_{d}(t) = (1-t^2)^{\frac{d-2}{2}}$. For any $f \in L_{w_d}^1([-1,1])$, we have the Gegenbauer expansion
\begin{equation}\label{eq:Gegen Expand General}
f(t) \sim \sum_{n=0}^{\infty} \widehat{f}(n,d) \frac{2n+d-1}{d-1} C_n^{\frac{d-1}{2}}(t),
\end{equation}
where $C_n^\lambda$ are the standard Gegenbauer polynomials (see e.g. \cite{DX} for details) and 
\begin{equation}\label{eq:GegenCoeff}
\widehat{f}(n,d) = \frac{\Gamma(\frac{d+1}{2}) n! \Gamma(d-1)}{\sqrt{\pi} \Gamma( \frac{d}{2}) \Gamma(n+d-1)} \int_{-1}^{1} f(t) C_n^{\frac{d-1}{2}}(t) w_d(t) dt.
\end{equation}

Note that for all $s \in \mathbb{R}$ and $\varepsilon >0$, the function $f_{s,\varepsilon}$ is continuous, and therefore in $L_{w_d}^2([-1,1])$. On the other hand, $f_s \in L_{w_d}^2([-1,1])$ only when $s< \frac{d}{2}$, which would lead to certain technical complications in the proofs when $s\ge \frac{d}{2}$. At the same time $f_s \in L_{w_d}^1([-1,1])$ for all $s<d$, in other words, $K_s$ is integrable on the sphere, i.e. $\mathcal{I}_{s,d} < \infty$, for $s<d$ (this is the potential-theoretic case).

We shall need the information about the behavior of the Gegenbauer coefficients of the Riesz kernels $f_s$ as well as their approximations $f_{s,\varepsilon}$. We postpone a detailed analysis to the Appendix and only mention the most relevant information here: for $0<s<d$, the coefficients $\widehat{f}_{s,\varepsilon} (n,d) $ and $\widehat{f}_{s} (n,d) $ are non-negative and decreasing, and $\widehat{f}_{s} (n,d) $ is of the order $n^{s-d}$ (Corollary \ref{cor:Riesz Gegen Coeff}). 

We start with the case $-2 < s < \frac{d}{2}$ and will then explain the adjustments needed in the range $\frac{d}{2} \le s < d$. Starting with an arbitrary $\omega_{N,d} = \{ x_1,\dots, x_N\}$ and trivially estimating the integral by the average, we observe that 
$$ P_{K_s} (\omega_{N,d}) = \min_{x \in \mathbb{S}^d} \sum_{j=1}^{N} K_s(x, x_j) \le  \sum_{j=1}^{N} \int_{\mathbb S^d}  K_s (x, x_j) \, d\sigma_d (x) =  \mathcal{I}_{s,d} N ,  $$
where we have used that due to rotational invariance the integral  $\int_{\mathbb S^d}  K_s (x, z) \, d\sigma_d (x)
= \mathcal{I}_{s,d} $ is independent of $z\in \mathbb S^d$. 
Therefore, $$ \mathcal{I}_{s,d}  - \frac{1}{N}  P_{K_s} (\omega_{N,d})
= \max_{x\in \mathbb S^d}  \bigg( \mathcal{I}_{s,d}  - \frac{1}{N} \sum_{j=1}^{N} K_s(x, x_j) \bigg) \ge 0,$$ and we can thus estimate this expression from below by  the $L^2$-average:
\begin{align*}
\mathcal{I}_{s,d}  - \frac{1}{N}  P_{K_s} (\omega_{N,d}) &=  \max_{x\in \mathbb S^d}  \bigg( \mathcal{I}_{s,d}  - \frac{1}{N} \sum_{j=1}^{N} K_s(x, x_j) \bigg)\\
& \ge \bigg( \int_{\mathbb S^d} \bigg| \mathcal{I}_{s,d} -  \frac{1}{N} \sum_{j=1}^{N} K_s(x, x_j) \bigg|^2 d\sigma_d (x)   \bigg)^{1/2}\\
& =  \bigg( \int_{\mathbb S^d} \bigg| \int_{\mathbb S^d} f_s (\langle x,y \rangle) \, d\sigma_d (y)  -  \frac{1}{N} \sum_{j=1}^{N} f_s(\langle x, x_j \rangle) \bigg|^2 d\sigma_d (x)   \bigg)^{1/2}\\
& = D_{L^2, f_s} (\omega_{N,d}),
\end{align*}
where $D_{L^2, f} (\omega_{N,d})$ is the generalized $L^2$-discrepancy of $\omega_{N,d}$ with respect to the function $f$ which is studied in \cite{BD,BDM,BMV} and well-defined for $ f \in L^2_{w_d} ([-1,1])$. Observe the similarity of this notion to the classical $L^2$-spherical cap discrepancy \eqref{eq:L2cap}. Indeed,  taking $f = {\bf{1}}_{[t,1]}$, one obtains the inner integral in the definition \eqref{eq:L2cap} of $ D_{L^2, \tiny \mbox{cap}}(\omega_{N,d})$. 
It has been shown \cite[Theorem 4.2]{BD} that  $$  D_{L^2, f} (\omega_{N,d}) \ge C_d \min_{1\le n \le c_d N^{1/d}} | \widehat{f} (n,d) |  $$ for some constants $C_d$, $c_d >0$. 
Since $f_s \in L^2_{w_d} ([-1,1])$  for $-2 < s<d/2$ and Gegenbauer coefficients  $\widehat{f_s} (n,d)$, $n > 0$,  are  positive and decreasing (Corollary \ref{cor:Riesz Gegen Coeff}), we find that  
$$  \mathcal{I}_{s,d}N  -   P_{K_s} (\omega_{N,d}) \ge   N  D_{L^2, f_s} (\omega_{N,d}) \ge C_d N  \widehat{f}_{s}\Big( \big\lfloor c_d N^{1/d} \big\rfloor ,d \Big) \ge  c_{s,d}  N^{\frac{s}{d} },$$
where we have used the asymptotics of the Gegenbauer coefficients $\widehat{f_s} (n,d)$. Taking the supremum over $\omega_{N,d}$ proves the upper bound in  Theorem \ref{thm:Singular Riesz Polarization} for $-2< s < d/2$. \\

Turning to the case $s\ge d/2$,  we can repeat the argument above verbatim for the kernel $K_{s,\varepsilon } (x,y ) = f_{s,\varepsilon} (\langle x,y \rangle)$, using the fact that the Gegenbauer coefficients of $f_{s,\varepsilon} $ are positive and decreasing (Corollary \ref{cor:Riesz epsilon Gegen Coeff}) and the fact that $0\le f_{s,\varepsilon} (t) \le f_{s} (t) $),  to find that 
$$ \mathcal{I}_{s,d} N -   P_{K_{s,\varepsilon}} (\omega_{N,d}) \ge C_d N  \widehat{f}_{s,\varepsilon}\Big( \big\lfloor c_d N^{1/d} \big\rfloor ,d \Big) . $$ 
Taking the limit as $\varepsilon \rightarrow 0$ which is justified by Lemma \ref{lem:Limit for Polarization} and by the Lebesgue dominated convergence theorem, since $0\le f_{s,\varepsilon} (t) \le f_{s} (t) $.
Using the asymptotics of $\widehat{f_s} (n,d)$ from Corollary \ref{cor:Riesz Gegen Coeff}, one  obtains 
$$ \mathcal{I}_{s,d} N -   P_{K_s} (\omega_{N,d})   \ge C_d N  \widehat{f}_{s}\Big( \big\lfloor c_d N^{1/d} \big\rfloor ,d \Big) \ge  c_{s,d}  N^{\frac{s}{d} }, $$ which proves the required bound  for the range $ d/2 \le s < d$.

\subsection{Proof of Theorem \ref{thm:Singular Riesz Polarization}: lower bound}\label{sec:lower}

We collect some known results from which the lower bound of Theorem \ref{thm:Singular Riesz Polarization} follows immediately for $d \ge 2$. The first relates the maximal polarization to the minimal energy.

\begin{proposition}[Proposition 14.1.1, \cite{BHS}]\label{prop:Relate Polarization and Energy LB}
For all $N \geq 2$ and kernels $K$, we have
\begin{equation}
\mathcal{P}_K(N) \geq \frac{\mathcal{E}_K(N+1)}{N+1} \geq \frac{\mathcal{E}_K(N)}{N-1}.
\end{equation}
\end{proposition}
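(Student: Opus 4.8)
The plan is to prove both inequalities from a single bookkeeping identity together with one variational observation. Since $\mathbb{S}^d$ is compact and $K$ is lower semi-continuous, the energy $E_K$ attains its minimum on configurations of any fixed size, so an optimal $(N+1)$-point set exists; likewise the minimum defining each polarization is attained. Fix an optimal configuration $\{y_1,\dots,y_{N+1}\}$ with $E_K(\{y_1,\dots,y_{N+1}\}) = \mathcal{E}_K(N+1)$, and for each $k$ let $U_k = \sum_{j\neq k} K(y_k,y_j)$ be the potential felt at $y_k$ by the remaining $N$ points. Counting each unordered pair twice gives the identity $\sum_{k=1}^{N+1} U_k = \mathcal{E}_K(N+1)$, which drives both halves of the statement.

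For the left inequality $\mathcal{P}_K(N) \ge \mathcal{E}_K(N+1)/(N+1)$, the key step is to observe that at an energy minimizer each $y_k$ must minimize the potential $x \mapsto \sum_{j\neq k}K(x,y_j)$ over $\mathbb{S}^d$: otherwise relocating $y_k$ to a genuine minimizer (which exists by lower semi-continuity and compactness) would strictly decrease the total energy, contradicting optimality. Consequently $U_k = P_K(\{y_j : j\neq k\})$ for every $k$, and since $\{y_j : j\neq k\}$ is an admissible $N$-point set we get $\mathcal{P}_K(N) \ge U_k$ for all $k$. Taking the maximum over $k$ and bounding it below by the average, $\mathcal{P}_K(N) \ge \max_k U_k \ge \frac{1}{N+1}\sum_{k=1}^{N+1} U_k = \frac{\mathcal{E}_K(N+1)}{N+1}$.

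For the right inequality $\mathcal{E}_K(N+1)/(N+1) \ge \mathcal{E}_K(N)/(N-1)$, I would reuse the same configuration but delete one point at a time. Removing $y_k$ yields an $N$-point set of energy $\mathcal{E}_K(N+1) - 2U_k$, which is at least $\mathcal{E}_K(N)$ by minimality. Summing these $N+1$ inequalities and substituting $\sum_k U_k = \mathcal{E}_K(N+1)$ collapses the left-hand side to $(N-1)\mathcal{E}_K(N+1)$, so $(N-1)\mathcal{E}_K(N+1) \ge (N+1)\mathcal{E}_K(N)$; dividing by $(N-1)(N+1)>0$, which is legitimate because $N\ge 2$, gives the claim.

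I do not anticipate a real obstacle, as both inequalities collapse to the same averaging identity. The only step needing genuine care is the variational claim in the second paragraph: it relies on the existence of the energy minimizer and on the fact that a non-optimally-placed point could be strictly improved. For singular kernels the relocated point cannot collide with an existing $y_j$ (the potential there is $+\infty$), so the improved configuration is admissible; non-uniqueness of the potential minimum is harmless, since only the inequality $U_k \le \mathcal{P}_K(N)$, not uniqueness, is used.
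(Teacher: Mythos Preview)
The paper does not prove this proposition; it simply quotes it from \cite{BHS} as a known tool, so there is no in-paper argument to compare against. Your proof is correct and is essentially the standard one found in \cite{BHS}: the identity $\sum_{k} U_k = \mathcal{E}_K(N+1)$ together with the variational observation that each point of an energy minimizer realizes the polarization of the remaining $N$ points yields the first inequality, and the point-deletion-and-averaging argument yields the second. The only caveat worth noting is that the variational step implicitly allows repeated points when you relocate $y_k$ (for bounded kernels the potential minimum might land on an existing $y_j$); this is the usual convention in \cite{BHS}, where configurations are multisets, so the argument goes through as written.
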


Optimal second-order asymptotics for the discrete Riesz $s$-energy in the range $-2<s<d$ have been computed by various authors \cite{Brauchart, BHS12, KS, RSZ, wagner1, wagner2}, see also Theorems 6.4.5, 6.4.6, and 6.4.7 in \cite{BHS}. These  bounds are as follows. 

\begin{theorem}\label{thm:Riesz Energy Asymptotics}
For $d \geq 1$, $-2 < s < d$, $s \neq 0$, there exist  positive constants $C_{s,d}$, $C'_{s,d}$ such that for $N \geq 2$,
\begin{align*}
\mathcal{I}_{s,d} N^2 - C'_{s,d} N^{1+\frac{s}{d}} & \le \mathcal{E}_{K_s}(N) \le \mathcal{I}_{s,d} N^2 - C_{s,d} N^{1+\frac{s}{d}}, \quad & s>0,\\
\mathcal{I}_{s,d} N^2 + C'_{s,d} N^{1+\frac{s}{d}} & \le \mathcal{E}_{K_s}(N) \le \mathcal{I}_{s,d} N^2 +  C_{s,d} N^{1+\frac{s}{d}}, \quad & s<0. 
\end{align*} 
If $s = 0$, then
\begin{equation}
\mathcal{E}_{K_s}(N) = \mathcal{I}_{s,d} N^2 - \frac{1}{d}N \log(N) + \mathcal{O}(N).
\end{equation}
\end{theorem}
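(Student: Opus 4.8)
The plan is to recognize that Theorem \ref{thm:Riesz Energy Asymptotics} assembles sharp energy asymptotics already established across the cited works \cite{Brauchart, BHS12, KS, RSZ, wagner1, wagner2}; rather than reprove them from scratch I would organize the argument into matching upper and lower bounds, treating the regimes $s>0$, $s<0$, and $s=0$ separately. In all three cases the kernel $f_s$ is increasing in the inner product (so $K_s$ is decreasing in distance and minimizing energy spreads points apart), but the sign of $\sgn(s)$, together with whether $f_s$ is singular at coincidence, dictates the sign of the second-order correction.

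First I would establish the upper bound (existence of a near-optimal configuration). The natural construction is an area-regular partition of $\mathbb{S}^d$ into $N$ cells of equal measure $1/N$ and diameter comparable to $N^{-1/d}$, placing one point in each cell. Comparing the resulting $\sum_{i\neq j} f_s(\langle x_i,x_j\rangle)$ with $N^2 \mathcal{I}_{s,d} = N^2 \int\int f_s \, d\sigma_d\, d\sigma_d$, the interactions between well-separated cells reproduce the continuous double integral up to lower-order error, while the dominant deviation comes from the omitted diagonal together with the near-diagonal interactions: each point has $O(1)$ neighbors at distance $\asymp N^{-1/d}$, each contributing $|f_s| \asymp N^{s/d}$, for a total of order $N^{1+s/d}$. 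Tracking $\sgn(s)$ shows this correction lowers the energy when $s>0$ and raises it when $s<0$, matching the stated upper bounds, while the $s=0$ logarithmic kernel produces the additional $\frac1d N\log N$.

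The harder direction is the lower bound, and here I would lean on the Gegenbauer machinery already developed for Theorem \ref{thm:Singular Riesz Polarization}. Expanding $f_s(t) \sim \sum_n \widehat{f}(n,d)\, \frac{2n+d-1}{d-1} C_n^{\frac{d-1}{2}}(t)$ and using the positivity of the zonal sums $\sum_{i,j} C_n^{\frac{d-1}{2}}(\langle x_i,x_j\rangle) \ge 0$, the $n=0$ term already furnishes the leading term $\mathcal{I}_{s,d} N^2$. The obstacle is the diagonal: for $s>0$ the kernel is singular, so I would run the Delsarte--Yudin linear programming method, choosing a test function $h \le f_s$ on $[-1,1)$ with nonnegative Gegenbauer coefficients and finite $h(1)$ (the positivity and decay $\widehat{f}(n,d) \asymp n^{s-d}$ from Corollary \ref{cor:Riesz Gegen Coeff} being exactly what make such $h$ available), so that $\sum_{i\neq j} f_s \ge \sum_{i,j} h - N h(1) \ge \widehat{h}(0,d) N^2 - N h(1)$; optimizing the degree of $h$ at scale $N^{1/d}$ converts this into the $N^{1+s/d}$ correction.

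The main obstacle is extracting the \emph{sharp} order and the correct sign of this second-order term in the lower bound, so as to match the construction: the trivial $n=0$ estimate only recovers $\mathcal{I}_{s,d}N^2$, and capturing the $N^{1+s/d}$ term requires the refined linear programming bounds with test functions tuned to degree $\asymp N^{1/d}$, which is precisely the deep content of \cite{KS, wagner1, wagner2, RSZ}. For $s<0$ the kernel is bounded, so the construction simplifies and the singularity issue disappears, whereas the $s=0$ case needs a separate but parallel analysis of the logarithmic kernel. I expect the cleanest exposition to simply cite Theorems 6.4.5--6.4.7 of \cite{BHS} for the final statement, using the sketch above only to indicate why the exponent $1+\tfrac{s}{d}$ and the sign pattern are the natural ones.
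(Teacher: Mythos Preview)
Your proposal is correct and aligns with the paper's treatment: the paper does not prove Theorem \ref{thm:Riesz Energy Asymptotics} at all but simply records it as a known result, citing \cite{Brauchart, BHS12, KS, RSZ, wagner1, wagner2} and Theorems 6.4.5--6.4.7 of \cite{BHS}. Your final sentence reaches exactly this conclusion, and the preceding sketch (area-regular partition for the upper bound, Delsarte--Yudin linear programming for the lower bound) accurately captures the methods underlying those references, so there is nothing to correct.
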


Combining Theorem \ref{thm:Riesz Energy Asymptotics} and Proposition \ref{prop:Relate Polarization and Energy LB}, we deduce the following bounds.
\begin{corollary}\label{cor:Polarization asymptotics lower bound}
For $d \in \mathbb{N}$, $-2 < s < d$, $s \neq 0$, and $N \in \mathbb{N}$,
\begin{equation}
\mathcal{P}_{K_s}(N) \geq \begin{cases}
\mathcal{I}_{s,d}N - C_{s,d}' 2^{\frac{s}{d}} N^{\frac{s}{d}} + \mathcal{I}_{s,d} & 0<s<d, \\
\mathcal{I}_{s,d} N + \mathcal{I}_{s,d} + C_{s,d}' 2^{\frac{s}{d}}  N^{\frac{s}{d}} , & -2 <s < 0
\end{cases},
\end{equation}
where $C_{s,d}'$ is as in Theorem \ref{thm:Riesz Energy Asymptotics}.

If $s = 0$, then
\begin{equation}
\mathcal{P}_{K_0}(N) \geq \mathcal{I}_{0,d} N - \frac{1}{d} \log(N) + \mathcal{O}(1). 
\end{equation}
\end{corollary}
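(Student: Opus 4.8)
The plan is to combine the two preceding results mechanically, since the corollary is exactly their superposition. Proposition \ref{prop:Relate Polarization and Energy LB} reduces a lower bound for polarization to a lower bound for minimal energy via $\mathcal{P}_{K_s}(N) \geq \mathcal{E}_{K_s}(N+1)/(N+1)$ (valid for $N \geq 2$), and Theorem \ref{thm:Riesz Energy Asymptotics} supplies the needed lower bound for $\mathcal{E}_{K_s}(N+1)$. So the entire proof consists of substituting the energy estimate into the first inequality of Proposition \ref{prop:Relate Polarization and Energy LB}, dividing by $N+1$, and then rewriting everything in terms of $N$ rather than $N+1$.

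First I would treat $0 < s < d$. The $s>0$ line of Theorem \ref{thm:Riesz Energy Asymptotics} gives $\mathcal{E}_{K_s}(N+1) \geq \mathcal{I}_{s,d}(N+1)^2 - C'_{s,d}(N+1)^{1+\frac{s}{d}}$, so dividing by $N+1$ yields $\mathcal{P}_{K_s}(N) \geq \mathcal{I}_{s,d}(N+1) - C'_{s,d}(N+1)^{\frac{s}{d}}$. The linear part is handled exactly by $\mathcal{I}_{s,d}(N+1) = \mathcal{I}_{s,d}N + \mathcal{I}_{s,d}$, and the second-order term is controlled by the elementary estimate $(N+1)^{\frac{s}{d}} \leq (2N)^{\frac{s}{d}} = 2^{\frac{s}{d}}N^{\frac{s}{d}}$, which holds for $N \geq 1$ because $s/d > 0$ makes $x \mapsto x^{\frac{s}{d}}$ increasing and $N+1 \leq 2N$. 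This produces precisely the claimed constant $C'_{s,d}2^{\frac{s}{d}}$ and the additive $\mathcal{I}_{s,d}$. For $-2 < s < 0$ the identical manipulation applies, now invoking the $s<0$ line of Theorem \ref{thm:Riesz Energy Asymptotics}, giving $\mathcal{P}_{K_s}(N) \geq \mathcal{I}_{s,d}(N+1) + C'_{s,d}(N+1)^{\frac{s}{d}}$; the only difference is that the second-order term enters with a positive sign, so I need the reverse inequality $(N+1)^{\frac{s}{d}} \geq (2N)^{\frac{s}{d}}$, which is exactly what monotonicity gives here since $s/d < 0$ makes $x \mapsto x^{\frac{s}{d}}$ decreasing while still $N+1 \leq 2N$.

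For $s = 0$ I would divide the asymptotic $\mathcal{E}_{K_0}(N+1) = \mathcal{I}_{0,d}(N+1)^2 - \frac{1}{d}(N+1)\log(N+1) + \mathcal{O}(N+1)$ by $N+1$, obtaining $\mathcal{I}_{0,d}(N+1) - \frac{1}{d}\log(N+1) + \mathcal{O}(1)$, and then absorb the passage from $N+1$ to $N$ using $\mathcal{I}_{0,d}(N+1) = \mathcal{I}_{0,d}N + \mathcal{O}(1)$ and $\log(N+1) = \log N + \mathcal{O}(1)$, which yields the stated $\mathcal{I}_{0,d}N - \frac{1}{d}\log N + \mathcal{O}(1)$. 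There is no genuine obstacle in this argument; the only points requiring care are tracking the sign of $s/d$ when converting $(N+1)^{\frac{s}{d}}$ into $2^{\frac{s}{d}}N^{\frac{s}{d}}$ (which is the sole source of the factor $2^{\frac{s}{d}}$), and noting that Proposition \ref{prop:Relate Polarization and Energy LB} is used for $N \geq 2$, so the remaining small values of $N$ are either trivial or accommodated by enlarging the implicit constants.
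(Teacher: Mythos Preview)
Your proposal is correct and follows exactly the approach the paper indicates: the paper merely states that the corollary is obtained by ``combining Theorem \ref{thm:Riesz Energy Asymptotics} and Proposition \ref{prop:Relate Polarization and Energy LB}'' without writing out any details, and your substitution of the energy lower bound into $\mathcal{P}_{K_s}(N)\ge \mathcal{E}_{K_s}(N+1)/(N+1)$ together with the monotonicity estimate $(N+1)^{s/d}\lessgtr (2N)^{s/d}$ (depending on the sign of $s$) is precisely how that combination is carried out. One minor remark: the first inequality in Proposition \ref{prop:Relate Polarization and Energy LB} in fact holds for $N\ge 1$ (the restriction $N\ge 2$ there is only needed for the second inequality involving $1/(N-1)$), so no separate treatment of $N=1$ is required.
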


Since the constant term is essential when $s<0$, this gives the lower bound of Theorem \ref{thm:Singular Riesz Polarization} for the case $d \ge 2$.

\subsection{Greedy energy: proof of Theorems \ref{thm:greedyenergy} and \ref{thm:Greedy with m starting points}}
\label{subsec:greedy}
The lower bounds in Theorem \ref{thm:greedyenergy} are just the general lower bounds for minimal Riesz energies presented in  Theorem \ref{thm:Riesz Energy Asymptotics} above, so we concentrate on the upper bound of Theorem \ref{thm:Greedy with m starting points}.

\begin{proof}[Proof of Theorem \ref{thm:Greedy with m starting points}]
Let $\omega_{N,d}$ denote the set of the first $N$ points of the greedy sequence with respect to $K_s$  with $m$ initial points. Observe that, by construction, for $k \geq m$ 
$$ \sum_{j=1}^{k} K_s (x_{k+1}, x_j) =  P_{K_s} (\omega_{k,d}).$$ 
Therefore the discrete energy of  $\omega_{N+1,d}$ satisfies
\begin{align*}
E_{K_s} (\omega_{ N+1,d}) & = E_{K_s} (\omega_{ m,d}) + 2 \sum_{k=m}^{N} \sum_{j=1}^{k} K_s (x_{k+1}, x_j) \\
& =  E_{K_s} (\omega_{ m,d}) +  2 \sum_{k=m}^{N} P_{K_s} (\omega_{k,d}) \\
&  \leq  E_{K_s} (\omega_{ m,d}) + 2  \sum_{k=m}^{N} \mathcal{P}_{K_s} (k)\\
\end{align*}
Using the upper bounds from Theorem \ref{thm:Singular Riesz Polarization} one obtains
\begin{align*}
E_{K_s} (\omega_{ N,d}) & \le E_{K_s} (\omega_{ m,d}) + 2  \sum_{k=m}^{N-1} \mathcal{P}_{K_s} (k)  \\
& \leq  E_{K_s} (\omega_{ m,d}) +  2  \sum_{k=m}^{N-1} \big( \mathcal  I_{s,d} k - c_{s,d} k^{\frac{s}{d}}  \big) \\
& =  E_{K_s} (\omega_{ m,d}) +  2  \sum_{k=1}^{N-1} \Big(\mathcal  I_{s,d} k - c_{s,d} k^{\frac{s}{d}} \Big) - 2 \sum_{k=1}^{m-1} \Big(\mathcal  I_{s,d} k - c_{s,d} k^{\frac{s}{d}} \Big)  \\
& = E_{K_s} (\omega_{ m,d}) + \mathcal{I}_{s,d} N (N-1)  - \mathcal{I}_{s,d} m (m-1) - 2 c_{s,d} \Big( \sum_{k=1}^{N-1} k^{\frac{s}{d}} -\sum_{h=1}^{m-1} h^{\frac{s}{d}} \Big).
\end{align*}
We have that for $M \geq 2$
\begin{equation}
\sum_{k=1}^{M-1} k^{\frac{s}{d}} = \begin{cases}
\frac{d}{d+s} M^{1+ \frac{s}{d}} + \mathcal{O}(M^{\frac{s}{d}}), &  s \ge 0,\\ 
\frac{d}{d+s} M^{1+ \frac{s}{d}} + \mathcal{O}(1), &  -d<s<0,\\ 
\log(M) + \mathcal{O}(1), &  s= - d,\\ 
\zeta(-\frac{s}{d}) + \mathcal{O}(M^{1+\frac{s}{d}}), & -2d <s <-d.\\ 
\end{cases}
\end{equation}
and since $N \geq m$, our claim now follows. Note that the case  $s\le -d$  is only relevant when $d = 1$ given that $s \in (-2, d)$.
\end{proof}

\subsection{Proof of Corollary \ref{cor:pol}}\label{sec:cor}
Let $(x_n)_{n \in\mathbb{N}}$ be the  greedy sequence of points on $\mathbb{S}^d$ with respect to $K_s$   as defined in \eqref{eq:greedy}.
Arguing as in the proof of Theorem \ref{thm:greedyenergy},
$$E_{K_s} (\omega_{ N+1,d})  = 2 \sum_{k=1}^{N} \sum_{j=1}^{k} K(x_{k+1}, x_j)  = 2 \sum_{k=1}^{N} P_{K_s} (\omega_{k,d}).$$
We know from Theorem \ref{thm:greedyenergy} that 
$$ E_{K_s}(\omega_{N,d}) \geq \mathcal{I}_{s,d} N^2 - C_{s,d} N^{1 +\frac{s}{d}}.$$
We also recall that
$$P_K(\omega_{N,d}) = \min_{x \in \mathbb{S}^d} \sum_{j=1}^{N} K(x, x_j) \leq \int_{\mathbb{S}^d} \sum_{j=1}^{N} K(x, x_j) d\sigma_d = N \cdot \mathcal{I}_{s,d}.$$
We can now introduce, for $X>0$ and $N \in \mathbb{N}$, the set
$$ A_N = \left\{ 1 \leq j \leq N:  P_{K_s} (\omega_{j,d}) \leq j \cdot \mathcal{I}_{s,d} - X j^{\frac{s}{d}} \right\}.$$
Collecting all these estimates, we see, for some $\alpha_{s,d} > 0$,
\begin{align*}
 \mathcal{I}_{s,d} N^2 - C_{s,d} N^{1 +\frac{s}{d}} &\leq E_{K_s}(\omega_{N,d})  = 2 \sum_{k=1}^{N} P_{K_s} (\omega_{k,d}) \\
 & \le  2 \sum_{k =1 \atop k \notin A_N}^{N} k \cdot \mathcal I_{s,d}  +  2 \sum_{k =1 \atop k \in A_N}^{N}(  k \cdot \mathcal{I}_{s,d} - X k^{\frac{s}{d}} )  \\
  & =  2 \sum_{k =1}^{N} k  \cdot \mathcal{I}_{s,d} - 2   \sum_{ k \in A_N}^{} X k ^{\frac{s}{d}} \\
 & \le  2 \sum_{k =1}^{N} k  \cdot \mathcal{I}_{s,d} - 2  \sum_{k =1}^{\# A_N} X k ^{\frac{s}{d}} \\
 & \le  N^2 \mathcal{I}_{s,d} - \alpha_{s,d} X \left(\# A_N\right)^{1 + \frac{s}{d}}.
\end{align*}
From this we deduce that
$$ \# A_N \leq \left( \frac{C_{s,d}}{\alpha_{s,d} X} \right)^{-\frac{1}{1+\frac{s}{d}}} \cdot N,$$
which is less than $\varepsilon N$ for $X$ large enough.  This proves Corollary \ref{cor:pol}.

\section{$L^2$-discrepancy: Proof of Theorem \ref{thm:L2discrepancy}}
We turn to showing our main result on the $L^2$-spherical cap discrepancy of the greedy sequence. 
The relevance of the greedy construction to discrepancy is based on the following classical  result \cite{stol} that relates the sum of pairwise Euclidean distances (in other words, the Riesz energy with  $s = -1$) to the $L^2$-discrepancy.
\begin{theorem}[Stolarsky Invariance Principle]
\label{thm:stolarsky}
For $\omega_{N,d} = \left\{x_1, \dots, x_N \right\} \subset \mathbb{S}^d$,
\begin{align*}
 (D_{L^2, {\tiny \mbox{cap}}}(\omega_{N,d}))^2 & = c_d \left( \int_{\mathbb{S}^d} \int_{\mathbb{S}^d} \| x-y\| d\sigma_d(x) d\sigma_d(y) - \frac{1}{N^2} \sum_{i,j=1}^{N} \|x_i - x_j\| \right)\\
 & = c_d \, \bigg( \frac{1}{N^2} E_{K_{-1}} (\omega_{N,d})   - \mathcal I_{-1,d} \bigg), 
 \end{align*}
where the constant $c_d$ is given by
$$ c_d = \frac{1}{d} \frac{\Gamma((d+1)/2)}{\sqrt{\pi} \Gamma(d/2)} \sim \frac{1}{\sqrt{2\pi d}} \quad \mbox{as}~d \rightarrow \infty.$$
\end{theorem}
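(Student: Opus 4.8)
The plan is to expand the square in the definition \eqref{eq:L2cap} of $D_{L^2,\mathrm{cap}}^2$ and reduce everything to a single geometric kernel. Writing the integrand as $\big(\tfrac{1}{N}\sum_i \chi_{C(w,t)}(x_i) - \sigma_d(C(w,t))\big)^2$ and expanding, the central object that appears (after interchanging the order of integration, justified by Fubini's theorem since all integrands are bounded and the domains have finite measure) is
\[
G(x,y) = \int_{-1}^1\int_{\mathbb{S}^d}\chi_{C(w,t)}(x)\,\chi_{C(w,t)}(y)\,d\sigma_d(w)\,dt,
\]
the total $(w,t)$-measure of the caps containing both $x$ and $y$. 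Using $\sigma_d(C(w,t)) = \int_{\mathbb{S}^d}\chi_{C(w,t)}(z)\,d\sigma_d(z)$, each of the three terms of the expansion can be rewritten purely in terms of $G$: the first term becomes $\tfrac{1}{N^2}\sum_{i,j}G(x_i,x_j)$, the cross term becomes $-\tfrac{2}{N}\sum_i \int_{\mathbb{S}^d}G(z,x_i)\,d\sigma_d(z)$, and the last term becomes $\int_{\mathbb{S}^d}\int_{\mathbb{S}^d}G(z,z')\,d\sigma_d(z)\,d\sigma_d(z')$.

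The heart of the argument is to show that $G$ is an affine function of the chordal distance, namely $G(x,y) = 1 - c_d\|x-y\|$ with $c_d$ as in the statement. First I would perform the inner $t$-integral: since $\chi_{C(w,t)}(x)\chi_{C(w,t)}(y) = 1$ exactly when $t \le \min(\langle w,x\rangle,\langle w,y\rangle)$, this integral equals $1 + \min(\langle w,x\rangle,\langle w,y\rangle)$. Integrating over $w$ and using the elementary identity $\min(a,b) = \tfrac{a+b}{2} - \tfrac{|a-b|}{2}$ together with $\int_{\mathbb{S}^d}\langle w,x\rangle\,d\sigma_d(w) = 0$ reduces the computation to $-\tfrac12\int_{\mathbb{S}^d}|\langle w,x-y\rangle|\,d\sigma_d(w)$. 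By rotational invariance this equals $-\tfrac12\|x-y\|\,\kappa_d$, where $\kappa_d = \int_{\mathbb{S}^d}|\langle w,u\rangle|\,d\sigma_d(w)$ is independent of the unit vector $u$, so $c_d = \kappa_d/2$.

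To pin down the constant I would evaluate $\kappa_d = \int_{\mathbb{S}^d}|w_1|\,d\sigma_d(w)$ using the fact that a single coordinate on $\mathbb{S}^d$ has density proportional to $(1-t^2)^{(d-2)/2}$ on $[-1,1]$; the ratio of the two resulting beta integrals gives $\kappa_d = \tfrac{2}{d}\cdot\tfrac{\Gamma((d+1)/2)}{\sqrt{\pi}\,\Gamma(d/2)}$, hence $c_d = \tfrac{1}{d}\tfrac{\Gamma((d+1)/2)}{\sqrt{\pi}\,\Gamma(d/2)}$ as claimed. Finally I would substitute $G(x,y) = 1 - c_d\|x-y\|$ back into the three terms: the cross term and the last term both involve $\int_{\mathbb{S}^d}\|z-x_i\|\,d\sigma_d(z) = -\mathcal{I}_{-1,d}$ (independent of $x_i$), and when the three contributions are added, the constant pieces $1 - 2 + 1 = 0$ cancel while the distance pieces assemble into $c_d\big(\int_{\mathbb{S}^d}\int_{\mathbb{S}^d}\|x-y\|\,d\sigma_d(x)d\sigma_d(y) - \tfrac{1}{N^2}\sum_{i,j}\|x_i-x_j\|\big)$. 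Rewriting $\int\int\|x-y\| = -\mathcal{I}_{-1,d}$ and $\tfrac{1}{N^2}\sum_{i,j}\|x_i-x_j\| = -\tfrac{1}{N^2}E_{K_{-1}}(\omega_{N,d})$ yields the second displayed form.

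The main obstacle is really the single geometric identity $G(x,y) = 1 - c_d\|x-y\|$; everything else is bookkeeping of the expansion. The two ingredients that make it work --- performing the $t$-integral to produce the $\min$, and the identity $\min(a,b) = \tfrac{a+b}{2} - \tfrac12|a-b|$ that converts the cap-overlap into a mean absolute projection --- are where the linearity in chordal distance is actually created, and the clean evaluation of $\kappa_d$ via the projection density is the one place a genuine computation is needed.
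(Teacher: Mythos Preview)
Your proof is correct and follows the now-standard modern approach to the Stolarsky principle: expand the square, reduce to the ``cap-counting'' kernel $G(x,y)$, perform the $t$-integral to obtain $1+\min(\langle w,x\rangle,\langle w,y\rangle)$, use $\min(a,b)=\tfrac{a+b}{2}-\tfrac12|a-b|$, and evaluate the mean absolute projection via the coordinate density. All steps are sound, including the computation of $c_d$.

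However, there is nothing to compare against: the paper does \emph{not} prove this theorem. It is stated as a classical result and attributed to Stolarsky \cite{stol}; the paper only uses it as a tool in the proof of Theorem~\ref{thm:L2discrepancy}. The argument you outline is essentially the proof that appears in the paper's own references \cite{BDM} and \cite{BraD} (see also \cite{BMV}), which give precisely this kernel-based derivation of the invariance principle. So your approach is not only correct but is the one the authors themselves would likely point to if asked for a proof.
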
 

Note that the construction of the greedy sequence in \eqref{eq:greedy} when $s=-1$ aims to maximize the pairwise sum of Euclidean distances at every step and thus minimize the $L^2$-discrepancy at every step by the Stolarsky invariance principle. As will become clear in our proof of the $N^{-1/2}$ upper bound below, it is not of tremendous importance that one picks the maximum to show such an upper bound -- one really only cares about having a large value in the sum. Obviously, the larger the value the better (and thus the maximum is optimal at that step) but taking values close to the maximum should suffice (and does in practice).

\label{sec:thm1}
\begin{proof}[Proof of Theorem \ref{thm:L2discrepancy}]
Let us assume $\left\{x_1, \dots, x_m \right\} \subset \mathbb{S}^d$ is given and, for all $n \geq m$,  
$$  \sum_{i=1}^{n} \| x_{n+1}- x_i\| \geq n \int_{\mathbb{S}^d} \int_{\mathbb{S}^d} \|x-y\| d\sigma_d(x) d\sigma_d(y).$$
We have the trivial bound
$$ \sum_{i,j=1}^{m} \|x_i - x_j\| \geq 0$$
and, for $n \geq m$, that
\begin{align*}
 \sum_{i,j=1}^{n+1} \|x_i - x_j\| &= \sum_{i,j=1}^{n} \|x_i - x_j\| + 2\sum_{i=1}^{n} \|x_{n+1} - x_i\| \\
 &\geq \sum_{i,j=1}^{n} \|x_i - x_j\| + 2n \int_{\mathbb{S}^d} \int_{\mathbb{S}^d} \|x-y\| d\sigma_d(x) d\sigma_d(y).
 \end{align*}
 Iterating this inequality, we infer, for all $n > m$, that
 $$  \sum_{i,j=1}^{n} \|x_i - x_j\| \geq 2 \left(\sum_{k=m}^{n-1} k \right)\int_{\mathbb{S}^d} \int_{\mathbb{S}^d} \|x-y\| d\sigma_d(x) d\sigma_d(y).$$
 This now implies that for the first $N$ elements
\begin{align*}
(D_{L^2, {\tiny \mbox{cap}}}(\omega_{N,d}))^2 &= c_d \left( \int_{\mathbb{S}^d} \int_{\mathbb{S}^d} \| x-y\| d\sigma_d(x) d\sigma_d(y) - \frac{1}{N^2} \sum_{i,j=1}^{N} \|x_i - x_j\| \right)\\
&\leq c_d \left(\int_{\mathbb{S}^d} \int_{\mathbb{S}^d} \|x-y\| d\sigma_d(x) d\sigma_d(y) \right) \left( 1 - \frac{2}{N^2} \sum_{k=m}^{N-1} k\right)
\end{align*}
from which we deduce that
$$ D_{L^2, {\tiny \mbox{cap}}}(\omega_{N,d}) \leq \sqrt{c_d}  \left(\int_{\mathbb{S}^d} \int_{\mathbb{S}^d} \|x-y\| d\sigma_d(x) d\sigma_d(y) \right)^{\frac{1}{2}}\left( \frac{1}{N} + \frac{m^2-m}{N^2} \right)^{1/2}.$$
 
In the case of $d=2$, we have
$$ c_d = \frac{1}{2} \qquad \mbox{as well as} \qquad -\mathcal{I}_{-1,2} =  \int_{\mathbb{S}^2} \int_{\mathbb{S}^2} \|x-y\| d\sigma_d(x) d\sigma_d(y)  = \frac{4}{3}$$
from which we deduce
$$ D_{L^2, {\tiny \mbox{cap}}}(\omega_{N,d}) \leq  \frac{\sqrt{2}}{\sqrt{3}} \left( \frac{1}{N} + \frac{m^2 - {m} }{N^2} \right)^{1/2}.$$
\end{proof}
Note that the result is sharp (up to lower order terms) for sequences satisfying
$$  \sum_{i=1}^{n} \| x_{n+1}- x_i\| = n \int_{\mathbb{S}^2} \int_{\mathbb{S}^2} \|x-y\| d\sigma_d(x) d\sigma_d(y).$$
It is clear that many such sequences exist: all involved functions are continuous so there are always points where they attain their average value.

\begin{remark}
The above result, in a different form, has more or less appeared in the literature previously: for example, the result follows directly from Theorem 3.1 (and Remark 3.2) in \cite{LopM}. The purpose of reproving it here is to state the result in the form of an $L^2$-discrepancy bound and to give more explicit constants.
\end{remark}

\subsection{Proof of Proposition \ref{prop:disc}}\label{sec:prop:disc}  We conclude with a proof of Proposition \ref{prop:disc}. 
To establish a nontrivial lower bound, we show that the average growth of any consecutive pairs of points is bounded from below.
Suppose $\left\{x_1, \dots, x_n\right\} \subset \mathbb{S}^2$. 
 Recall that $-\mathcal{I}_{-1,2} = \frac{4}{3}$. 
Fix $X \ge 0$ such that 
$$   \max_{x \in \mathbb{S}^2} \sum_{i=1}^{n} \| x - x_i\| = \frac{4n}{3} + X.$$
Assume that $X < \frac{4n}{3}$. In this case it is easy to see that 
$$ \sigma_d \left( \left\{x \in \mathbb{S}^2:  \sum_{i=1}^{n} \| x - x_i\| <  \frac{4n}{3} - X  \right\}\right) < \frac{1}{2}.$$
Indeed,  the average value of the function $ \sum_{i=1}^{n} \| x - x_i\| $  is $4 n/3$
and its maximum is $ 4n/3  + X$, therefore, denoting the set above by $\Omega$, we have 
$$ \frac{4n}{3} < \sigma_d(\Omega) \left( \frac{4n}{3} - X \right) + \big( 1 -\sigma_d(\Omega) \big) \left(\frac{4n}{3}+X\right),$$
which implies that $\sigma_d (\Omega) < 1/2$. 
 Thus every hemisphere contains points $x$ satisfying the inequality $\sum_{i=1}^{n} \| x - x_i\| \ge 4n/3 - X$. 

By restricting  $x$ to the  hemisphere centered at $-x_{n+1}$ we conclude that
\begin{align*} 
\max_{x \in \mathbb{S}^2} \sum_{i=1}^{n+1} \| x - x_i\|  &  =  \max_{x \in \mathbb{S}^2}  \big( \|x - x_{n+1} \| +  \sum_{i=1}^{n} \| x - x_i\|  \big) \\
&  \geq \sqrt{2} + \frac{4n}{3} - X =  \frac{4(n+1)}{3} + \sqrt{2} - \frac{4}{3} - X.
 \end{align*}
Combining this with the definition of $X$, we see that
\begin{equation}\label{eq:sumof2}
 \frac{1}{2}\left(   \max_{x \in \mathbb{S}^2} \sum_{i=1}^{n} \| x - x_i\|  - \frac{4n}{3} + \max_{x \in \mathbb{S}^2}  \sum_{i=1}^{n+1} \| x - x_i\| - \frac{4(n+1)}{3} \right) \geq \frac{1}{\sqrt{2}} - \frac{2}{3}.
 \end{equation}
If $X \ge 4n/3$, one can see immediately that \eqref{eq:sumof2} holds with the right-hand side of $4n/3 - 1/6 >1$. Therefore, the sum $\sum_{n=1}^{N-1} \left( \max_{x \in \mathbb{S}^2}  \sum_{i=1}^{n} \| x - x_i\| -  \frac{4 n}{3} \right)$ in \eqref{eq:averagepol} grows at least linearly in $N$, which leads  to the lower bound in Proposition \ref{prop:disc}.
The upper bound follows from the Stolarsky principle which implies
$$ \int_{\mathbb{S}^d} \int_{\mathbb{S}^d} \| x-y\| d\sigma_d(x) d\sigma_d(y) - \frac{1}{N^2} \sum_{i,j=1}^{N} \|x_i - x_j\| \geq 0$$
and thus, for any set of points $\left\{x_1, \dots, x_N \right\} \subset \mathbb{S}^2$,
$$  \sum_{i,j=1}^{N} \|x_i - x_j\| \leq \frac{4N^2}{3}.$$
For a greedy sequence, we can write
\begin{align*}
 \sum_{i,j=1}^{N} \|x_i - x_j\| &= 2 \sum_{n=1}^{N-1} \left[ \frac{4n}{3} + \left(\max_{x \in \mathbb{S}^2} \sum_{i=1}^{n} \| x - x_i\|  - \frac{4n}{3} \right) \right] \\
 &= \frac{4}{3} (N-1)N+ 2 \sum_{n=1}^{N-1} \left(\max_{x \in \mathbb{S}^2} \sum_{i=1}^{n} \| x - x_i\|  - \frac{4n}{3} \right)
\end{align*}
and thus
$$ \frac{1}{N}\sum_{n=1}^{N-1} \left(\max_{x \in \mathbb{S}^2} \sum_{i=1}^{n} \| x - x_i\|  - \frac{4n}{3} \right) \leq \frac{2}{3}.$$

Note that this upper bound (with a worse constant) could be obtained directly from the lower bound in \eqref{eq:opt_polarization1} of Theorem \ref{thm:Singular Riesz Polarization}. We also remark that the same arguments  would also work in higher dimensions (with different constants).

\section{Numerical Examples and Comments}
\label{sec:numerics}
This section contains some basic numerical examples and general comments. First we note that, in the greedy construction, finding the exact point
 $$x_{n+1} = \arg\max_{x \in \mathbb{S}^d} \sum_{i=1}^{n} \| x- x_i\|$$
is computationally nontrivial. 
\vspace{-10pt}
\begin{center}
\begin{figure}[h!]
\begin{tikzpicture}
\node at (0,0) {\includegraphics[width=0.4\textwidth]{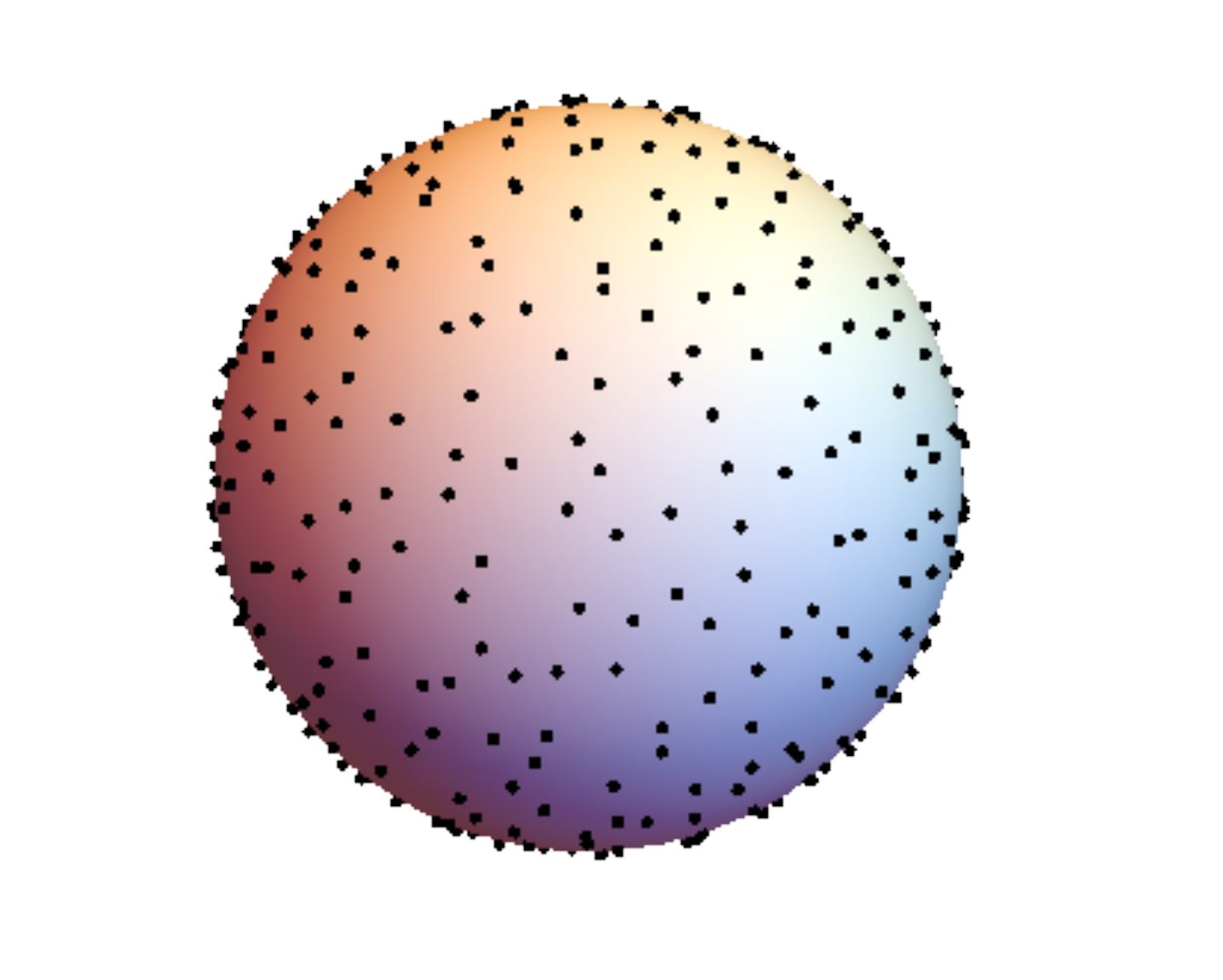}};
\node at (6,0) {\includegraphics[width=0.4\textwidth]{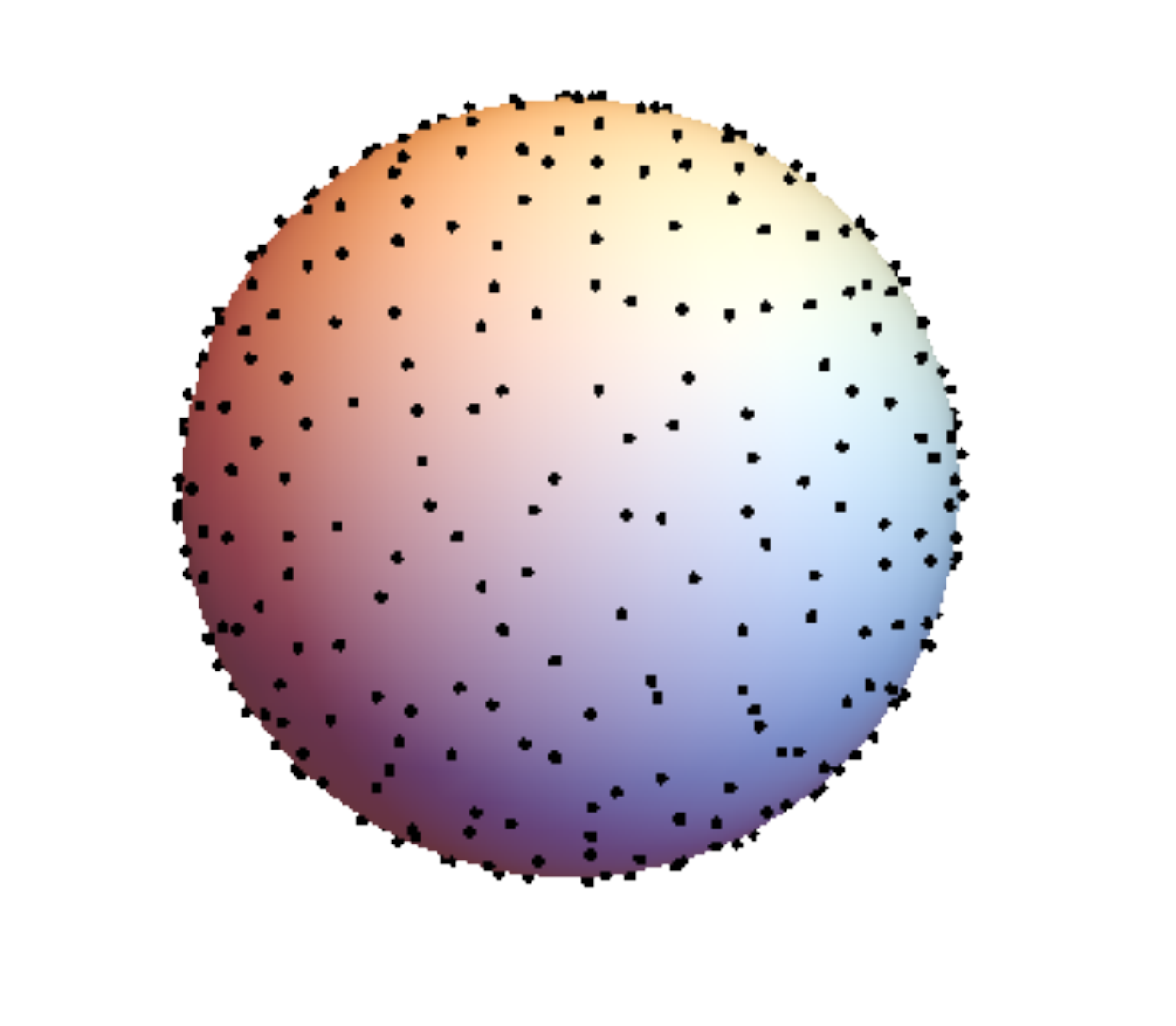}};
\end{tikzpicture}
\vspace{-20pt}
\caption{An example of a greedy point set with $N=500$ points on $\mathbb{S}^2$ seen from two angles (started with a single point).}
\label{fig:points}
\end{figure}
\end{center}
If the success of the greedy construction were to depend very strongly on finding exact maxima in each step, it would not be a very useful construction to begin with (indeed, as also indicated by the proof of Theorem \ref{thm:L2discrepancy}, luckily this does not seem to be the case). Numerical experiments suggest the exact opposite: these types of sequences tend to be incredibly robust and seem to lead to good results even if sometimes adversarial points are added manually. Throughout this section, we consider approximate sequences obtained as follows: given $\left\{x_1, \dots, x_N\right\} \subset \mathbb{S}^d$, we consider
100 random points and add the one maximizing the sum of distances among those. 
Figure \ref{fig:points} shows an example of a set of $N=500$ points on $\mathbb{S}^2$ obtained from a single individual point. We observe that the sequence looks somewhat random but avoids clusters of points more so than an actually random sequence would.

\begin{center}
\begin{figure}[h!]
\begin{tikzpicture}
\node at (0,0) {\includegraphics[width=0.45\textwidth]{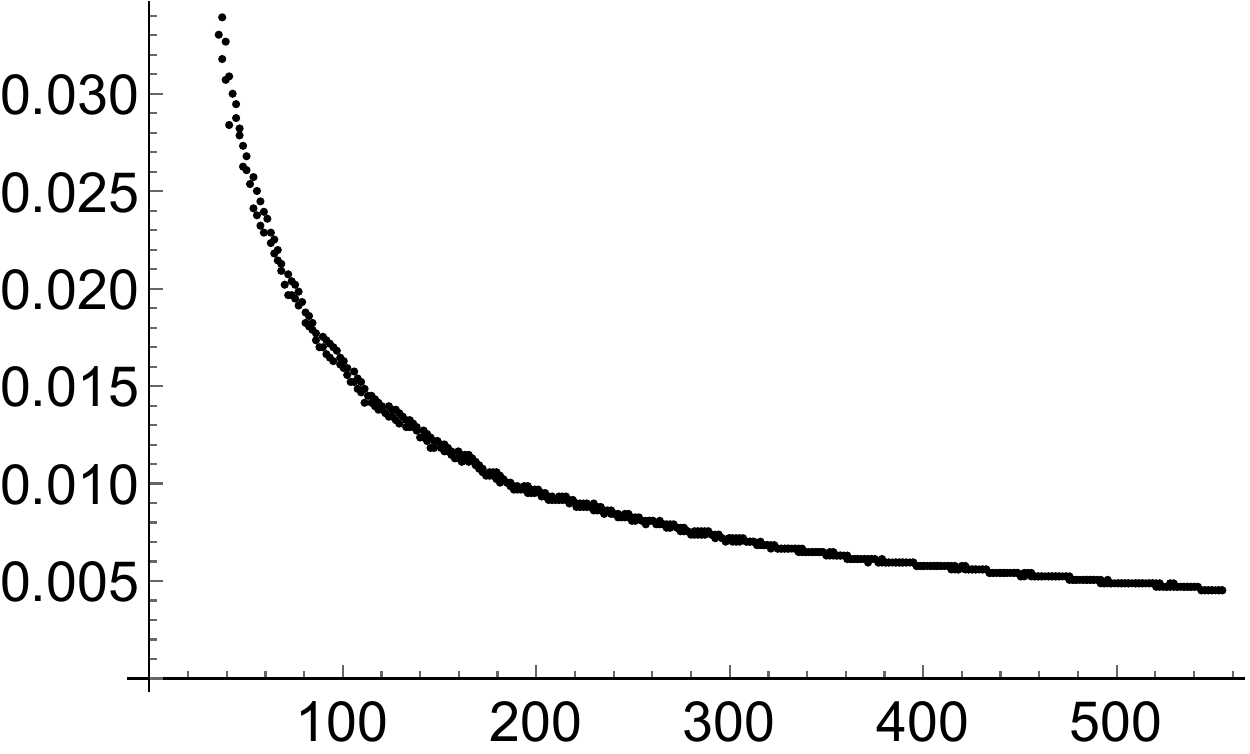}};
\node at (6.5,0) {\includegraphics[width=0.45\textwidth]{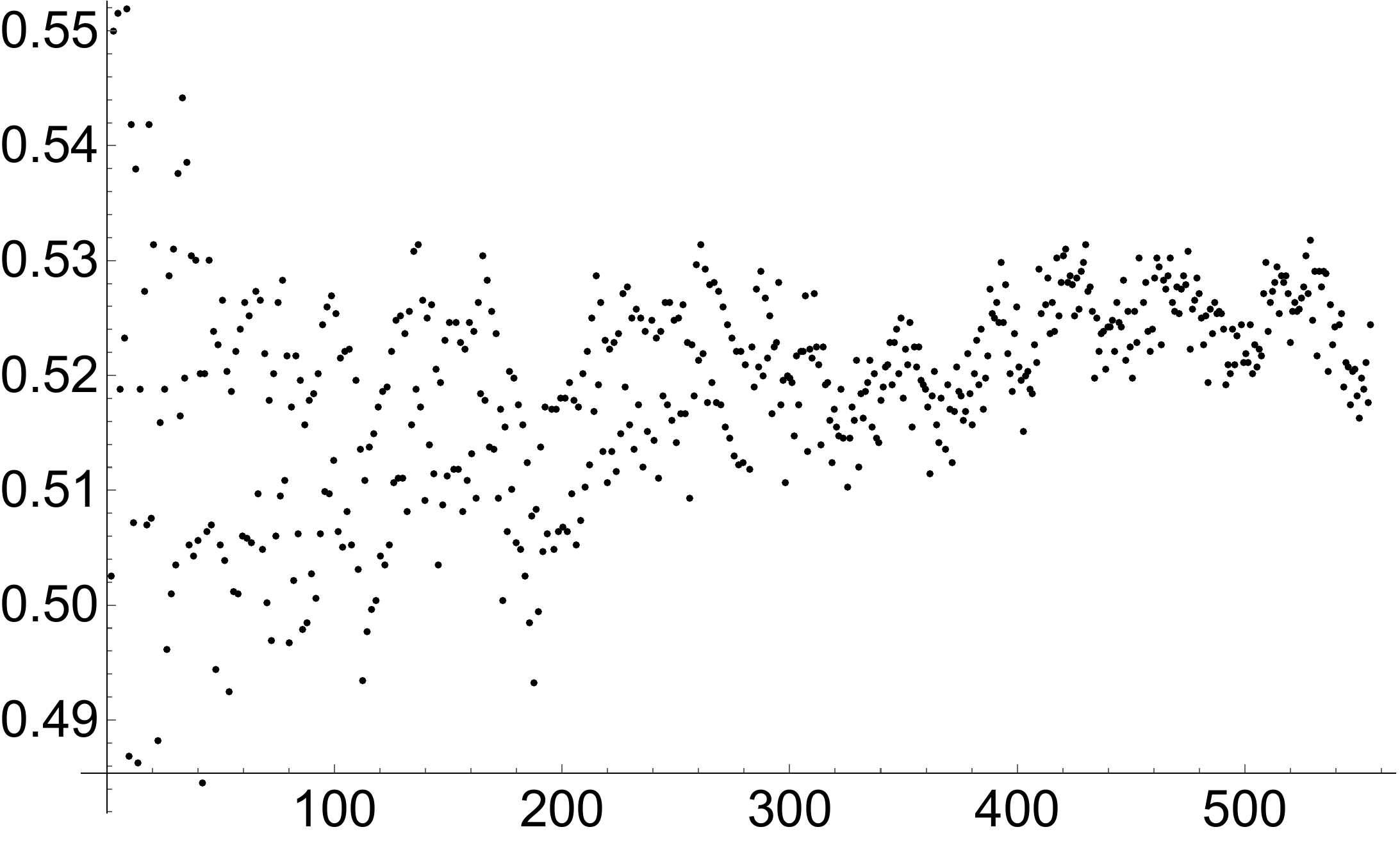}};
\end{tikzpicture}
\vspace{-10pt}
\caption{$L^2-$discrepancy of a greedy sequence on $\mathbb{S}^2$ started with one point. Left: $N^{1/2} \cdot D_{L^2, \tiny \mbox{cap}}(\omega_{N})$, right: $N^{3/4} \cdot D_{L^2, \tiny \mbox{cap}}(\omega_{N})$.}
\label{fig:S2}
\end{figure}
\end{center}
\vspace{-10pt}

When plotting the $L^2-$spherical cap discrepancy of this sequence one observes, numerically, that the discrepancy seems to be relatively close to $\sim 0.5 \cdot N^{-3/4}$. Note that our construction of the sequence was based on the approximation by random points and it stands to reason that Figure \ref{fig:S2} serves as an upper bound of the true behavior of a greedy sequence.
 The greedy method is agnostic to what happened in the past and works well with any arbitrary initial set. We illustrate this with a simple example where we first take 250 points uniformly at random and then compute another 250 points greedily. 
\begin{center}
\begin{figure}[h!]
\begin{tikzpicture}
\node at (0,0) {\includegraphics[width=0.45\textwidth]{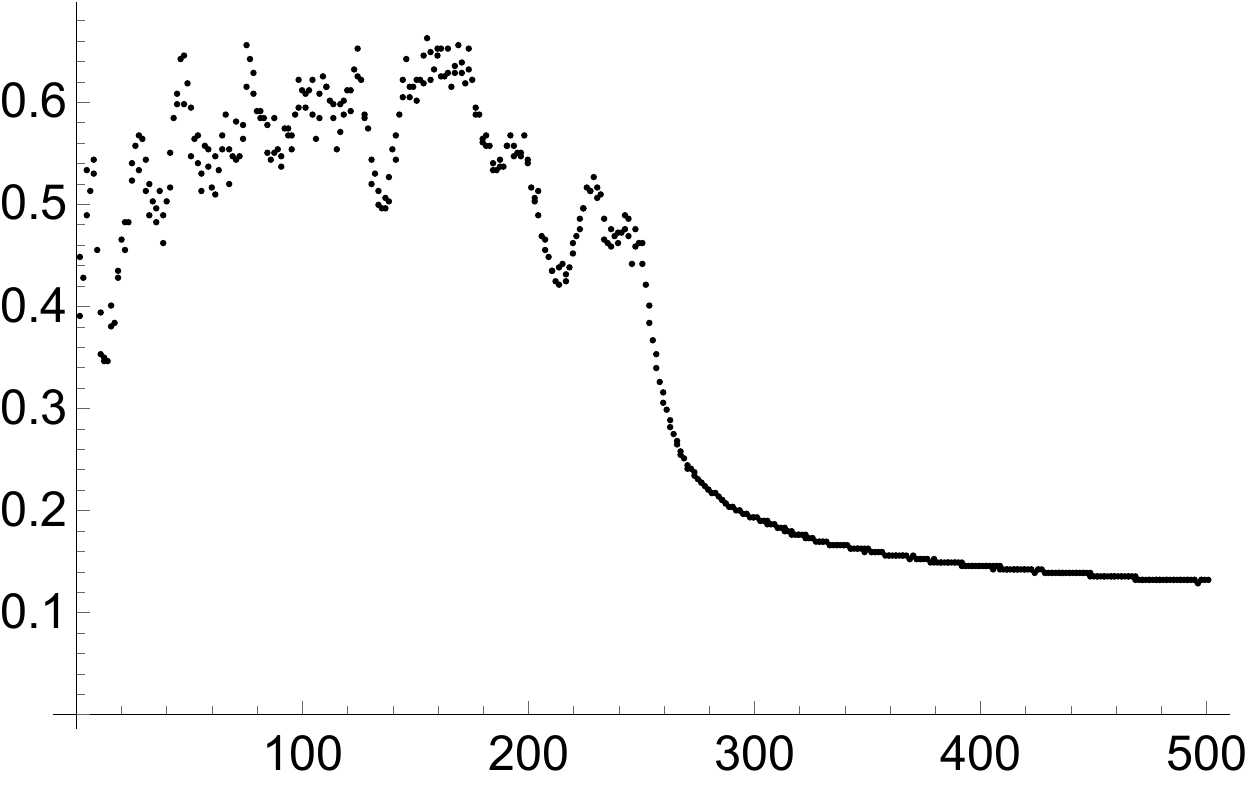}};
\node at (6.5,0) {\includegraphics[width=0.45\textwidth]{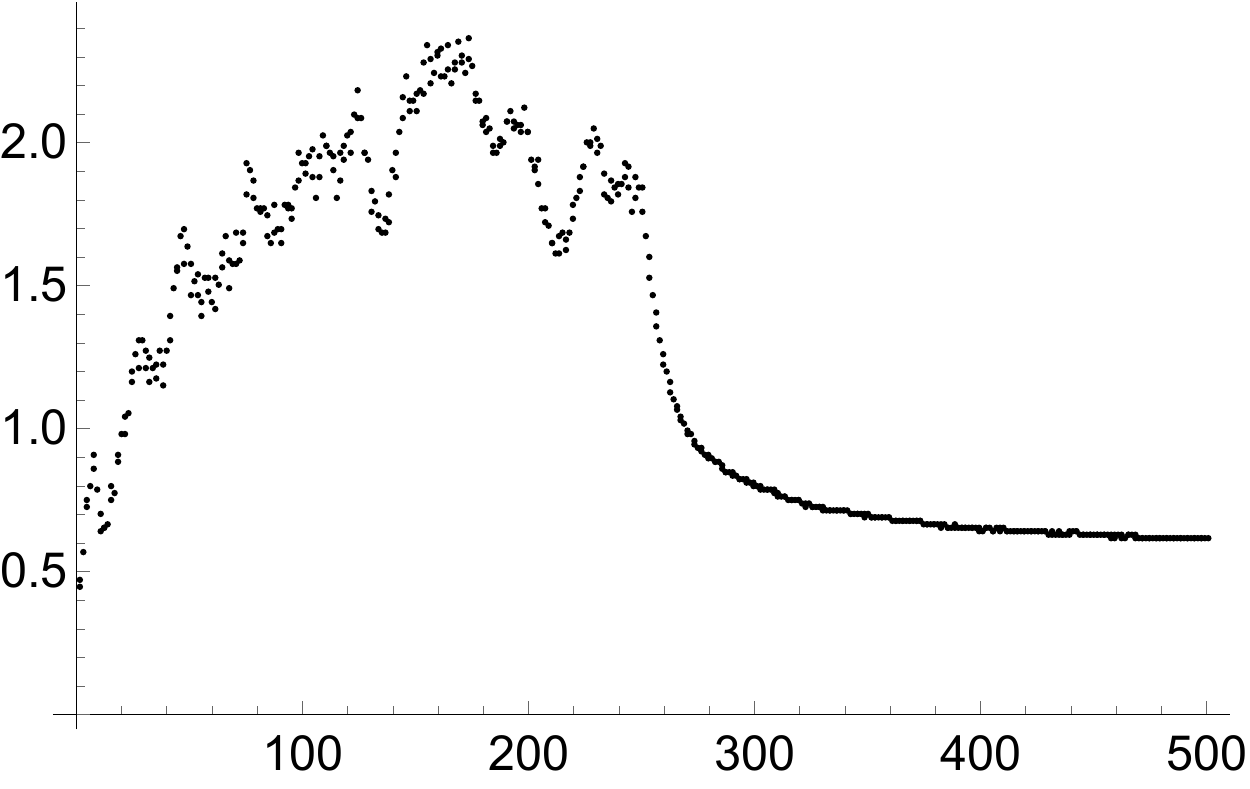}};
\end{tikzpicture}
\vspace{-10pt}
\caption{$L^2-$discrepancy of a greedy sequence comprised of 250 random points and then followed by 250 points using the greedy construction. Left: $N^{1/2} \cdot D_{L^2, \tiny \mbox{cap}}(\omega_{N})$, right: $N^{3/4} \cdot D_{L^2, \tiny \mbox{cap}}(\omega_{N}))$.}
\end{figure}
\end{center}
\vspace{-10pt}
The result is striking: for the first 250 elements, we see that $N^{1/2}  \cdot D_{L^2, \tiny \mbox{cap}}(\omega_{N})$ is approximately constant (as expected for random points). After that there is a pronounced decay.  Perhaps even more striking is that $N^{3/4}  \cdot D_{L^2, \tiny \mbox{cap}}(\omega_{N})$ is first increasing (roughly at rate $\sim N^{1/4}$ as we expect) and then quickly decreases and returns to a constant slightly above $1/2$ (see also Fig. 2).

\begin{center}
\begin{figure}[h!]
\begin{tikzpicture}
\node at (0,0) {\includegraphics[width=0.45\textwidth]{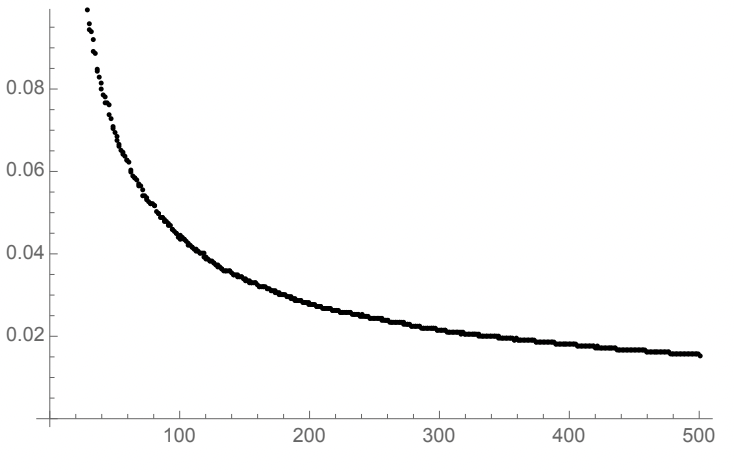}};
\node at (6.5,0) {\includegraphics[width=0.45\textwidth]{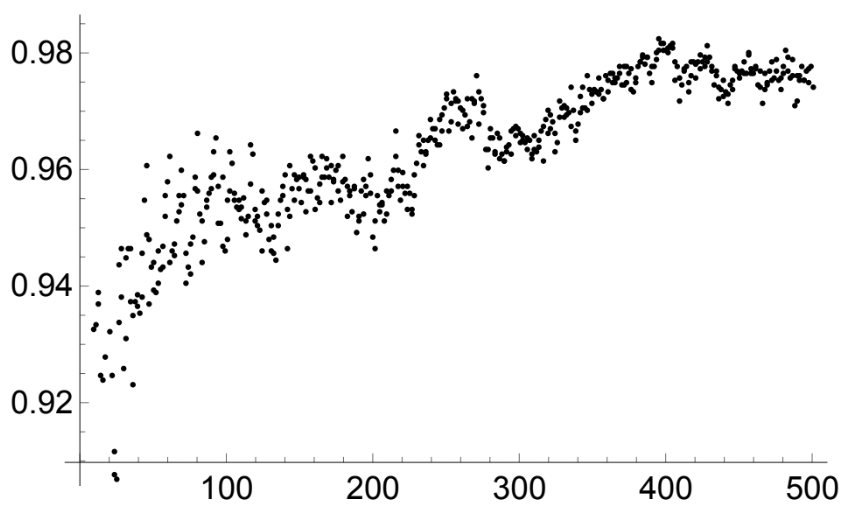}};
\end{tikzpicture}
\vspace{-10pt}
\caption{$L^2-$discrepancy of a sequence on $\mathbb{S}^3$ started with a single point. Left: $N^{1/2} \cdot D_{L^2, \tiny \mbox{cap}}(\omega_{N,3})$, right: $N^{2/3} \cdot D_{L^2, \tiny \mbox{cap}}(\omega_{N,3})$.}
\label{fig:s3}
\end{figure}
\end{center}
\vspace{-10pt}

One could wonder about higher dimensions: one would expect optimal sequences to behave as $D_{L^2, \tiny \mbox{cap}}(\omega_{N,3}) \sim N^{-2/3}$ on $\mathbb{S}^3$ as well as
$D_{L^2, \tiny \mbox{cap}}(\omega_{N,4}) \sim N^{ -5/8}$ on $\mathbb{S}^4$.
The same basic numerical experiment leads to results compatible with this interpretation. We emphasize that, due to increasing computational cost, these experiments were carried out only for rather small values of $N \leq 500$. At this scale, small powers of $N$ or logarithms are not always easy to detect. Moreover, the effectiveness our numerical procedure of picking 100 points at random and then adding the one with the largest distance depends on the profile of the function: if large deviations occur on sets of small measure (an effective that could conceivably become more pronounced in higher dimensions), then this method will lose effectiveness. Regardless, we believe these preliminary results to be rather interesting and hope they will inspire subsequent work on the regularity of greedy sequences on $\mathbb{S}^d$.

\begin{center}
\begin{figure}[h!]
\begin{tikzpicture}
\node at (0,0) {\includegraphics[width=0.45\textwidth]{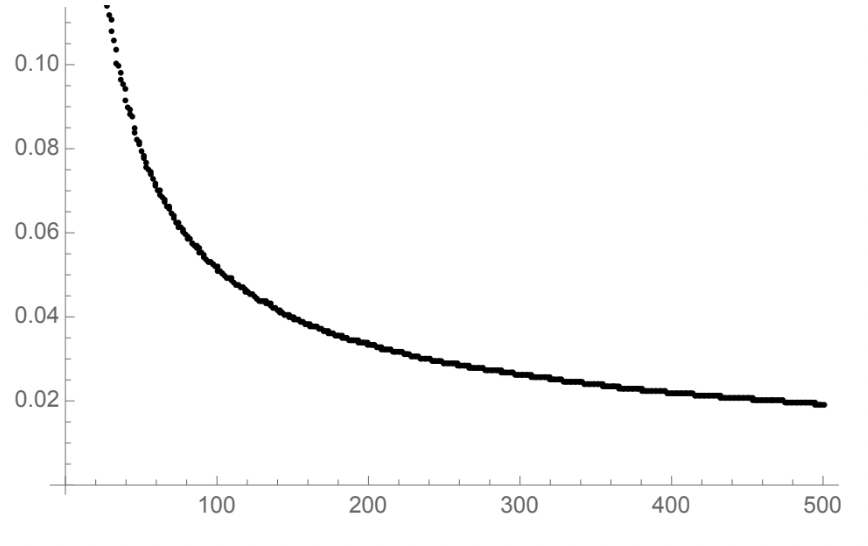}};
\node at (7,0) {\includegraphics[width=0.45\textwidth]{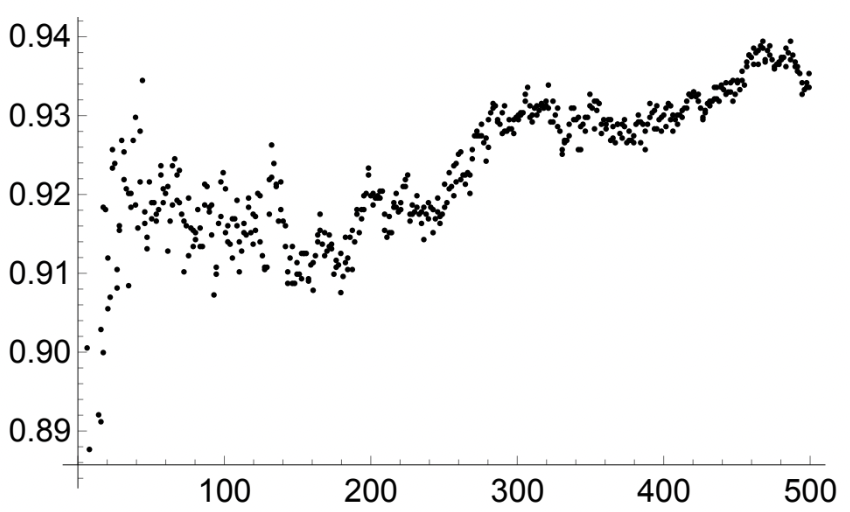}};
\end{tikzpicture}
\vspace{-10pt}
\caption{$L^2-$discrepancy of a sequence on $\mathbb{S}^4$ started with a single point. Left: $N^{1/2} \cdot D_{L^2, \tiny \mbox{cap}}(\omega_{N,4})$, right: $N^{5/8} \cdot D_{L^2, \tiny \mbox{cap}}(\omega_{N,4})$.}
\label{fig:S4}
\end{figure}
\end{center}
\vspace{-10pt}

We conclude with a conjecture which collects our observations in this section and speculates that the greedy sequence has nearly  optimal spherical cap discrepancy.

\begin{conjecture}
Let $\omega_{N,d} \subseteq \mathbb{S}^d$ be the first $N$ elements of the greedy sequence with respect to $K_{-1}$  as defined  in \eqref{eq:greedy}.  Then, for some $c = c(d)  \ge 0$, 
$$D_{L^2, \tiny \mbox{cap}}(\omega_{N,d}) = \mathcal O \big(  N^{-\frac{1}{2}-\frac{1}{2d}} \log^c(N) \big).$$
\end{conjecture}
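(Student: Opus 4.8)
By the Stolarsky invariance principle (Theorem~\ref{thm:stolarsky}) the conjecture is equivalent to the statement that the $s=-1$ greedy energy is optimal up to a logarithmic factor: writing the (nonnegative) \emph{energy deficit}
$$ R_N := E_{K_{-1}}(\omega_{N,d}) - \mathcal{I}_{-1,d}\,N^2 \;=\; \frac{N^2}{c_d}\, D_{L^2, \tiny \mbox{cap}}^2(\omega_{N,d}) \;\geq\; 0, $$
the conjecture asserts $R_N = \mathcal{O}\big(N^{1-1/d}\log^{2c} N\big)$. This should be compared with Theorem~\ref{thm:greedyenergy}, which yields only $R_N = \mathcal{O}(N)$ (the crude $N^{-1/2}$ bound of Theorem~\ref{thm:L2discrepancy}), and with the optimal lower bound $R_N \gtrsim N^{1-1/d}$; the entire difficulty is to close this gap of a factor $N^{1/d}$. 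The first step is to record the exact one-step recursion produced by the greedy rule. Setting $\Delta_n := \max_{x\in\mathbb{S}^d}\sum_{i=1}^n\|x-x_i\| - n\,(-\mathcal{I}_{-1,d}) \geq 0$ for the excess polarization at stage $n$, the identity $E_{K_{-1}}(\omega_{n+1,d}) = E_{K_{-1}}(\omega_{n,d}) - 2\big(n(-\mathcal{I}_{-1,d}) + \Delta_n\big)$ gives
$$ R_{n+1} = R_n + (-\mathcal{I}_{-1,d}) - 2\Delta_n. $$
Telescoping recovers the exact relation $\tfrac1N\sum_{n=1}^{N-1}\Delta_n = \tfrac{-\mathcal{I}_{-1,d}}{2} - \tfrac{R_N}{2N}$, so the target $R_N = \mathcal{O}(N^{1-1/d}\log^{2c}N)$ is precisely the assertion that the average excess polarization attains its maximal value $-\mathcal{I}_{-1,d}/2$ at the optimal rate, sharpening Proposition~\ref{prop:disc}.

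\noindent The plan is to exploit the self-correcting nature of this recursion: a large deficit $R_n$ forces the distance potential $g_n(x) := \sum_{i=1}^n\|x-x_i\| - n(-\mathcal{I}_{-1,d})$ to have a large peak, so that $\Delta_n = \max_x g_n(x)$ is large and the recursion \emph{decreases} $R_n$. Concretely, the crucial ingredient is a \textbf{reverse estimate} relating the $L^\infty$ peak of $g_n$ to the deficit $R_n$ for greedy configurations. If one could establish $\Delta_n \geq \tfrac{-\mathcal{I}_{-1,d}}{2} - C\,n^{-1/d}(\log n)^{2c}$ — at least on a density-one set of indices, or in an averaged sense strong enough to control $\sum_n\Delta_n$ — then feeding this into the recursion and summing would yield $R_N = \mathcal{O}(N^{1-1/d}\log^{2c}N)$ and hence the conjecture. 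The antipodal structure of Proposition~\ref{prop:Greedy Symmetry} (every second greedy point is the antipode of its predecessor, contributing the full diameter $2$ to the potential) should be used to manufacture good test points for the peak, upgrading the crude hemisphere argument of Proposition~\ref{prop:disc}, which only produces the constant $1/25$ in place of $-\mathcal{I}_{-1,d}/2 = 2/3$ on $\mathbb{S}^2$.

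\noindent The main obstacle is exactly this reverse estimate. The deficit is an $L^2$-type (energy) quantity: in the Gegenbauer expansion of $\|x-y\| = f_{-1}$ one has $R_n \asymp n^2\sum_{k\geq1}|\widehat{f_{-1}}(k,d)|\,a_k(\mu_n)$, where $\mu_n = \tfrac1n\sum_i\delta_{x_i}$, where $a_k(\mu_n)\geq 0$ is the degree-$k$ harmonic energy of $\mu_n-\sigma_d$, and where $|\widehat{f_{-1}}(k,d)|\asymp k^{-1-d}$ by Corollary~\ref{cor:Riesz Gegen Coeff}. By contrast the $L^2$-energy of $g_n$ controlling its peak involves the \emph{squared} coefficients, $\|g_n\|_{L^2}^2 = n^2\sum_{k\geq1}\widehat{f_{-1}}(k,d)^2\,a_k(\mu_n)$, with weights $\asymp k^{-2-2d}$. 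Thus the peak and the deficit weight the frequencies of $\mu_n$ quite differently, and passing from $L^2$ information to an $L^\infty$ lower bound for a \emph{specific} (greedy) configuration is the heart of the matter; the analysis is further complicated by the genuine nonlinearity of the rule $x_{n+1}=\argmin_{x}(-g_n(x))$, which precludes a naive frequency-by-frequency treatment. I expect that a successful argument must control all frequencies up to the critical scale $k\lesssim n^{1/d}$ simultaneously via a multiscale or covering scheme, and that the logarithmic factor $\log^c N$ enters precisely there, in direct analogy with the $\sqrt{\log N}$ loss in Beck's jittered-sampling bound \eqref{eq:beck}. Absent such an estimate the conjecture remains open, though the numerics of \S\ref{sec:numerics} strongly suggest that $c$ may be taken close to $0$.
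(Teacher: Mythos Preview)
The statement is a \emph{conjecture} in the paper, not a theorem; the paper offers no proof, only numerical evidence (\S\ref{sec:numerics}) and the partial information of Theorem~\ref{thm:L2discrepancy} and Proposition~\ref{prop:disc}. You have correctly recognized this and presented a program rather than a proof, explicitly stating that the conjecture remains open. There is therefore nothing to compare against.

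Your reformulation is accurate and in fact tracks the paper's own framework closely. The recursion $R_{n+1}=R_n+(-\mathcal I_{-1,d})-2\Delta_n$ and the telescoped identity $\tfrac1N\sum_{n=1}^{N-1}\Delta_n=\tfrac{-\mathcal I_{-1,d}}{2}-\tfrac{R_N}{2N}$ are exactly what underlies the proof of Proposition~\ref{prop:disc}; the upper bound $2/3$ there is precisely $-\mathcal I_{-1,2}/2$, confirming your observation that the conjecture amounts to showing the average excess polarization saturates at rate $N^{-1/d}$ up to logs. Your identification of the core obstacle --- a reverse inequality bounding the peak $\Delta_n$ from below by (a function of) the deficit $R_n$ --- is the genuine missing ingredient, and your spectral heuristic (the mismatch between the weights $k^{-1-d}$ governing $R_n$ and $k^{-2-2d}$ governing $\|g_n\|_{L^2}^2$) correctly pinpoints why a naive $L^2\to L^\infty$ argument fails. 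One small caveat: Proposition~\ref{prop:Greedy Symmetry} applies only when the sequence is initialized with a single point, so invoking the antipodal structure would restrict the argument to $m=1$; this is a minor point since the conjecture is most natural in that setting anyway.
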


\section{Appendix}\label{sec:Appendix}

\subsection{Gegenbauer coefficients}

In this section we compute the Gegenbauer coefficients of the functions $f_s$ (the Riesz kernel) and $f_{s,\varepsilon}$ (its approximation) which were defined in \S\ref{sec:upper}. Various computations for such coefficients for some ranges of $s$ can be found scattered in the literature (e.g. \cite{Brauchart,DG}). However, since we need finer properties of these coefficients (asymptotics, monotonicity, positivity) in the full range, we present detailed computations here. 
We proceed first with finding the Gegenbauer coefficients for the functions $f_{s, \varepsilon}$.

\begin{lemma}\label{lem:Riesz epsilon Gegen Coeff}
For $0< s $, $n \in \mathbb{N}$
\begin{align*}
\widehat{f_{s, \varepsilon}}(n,d) &= \frac{\Gamma( \frac{d+1}{2}) \Gamma(\frac{s}{2} +n)}{\Gamma( n + \frac{d+1}{2}) \Gamma( \frac{s}{2})} (2+ \varepsilon)^{-n - \frac{s}{2}} \Hypergeom21%
{ \frac{2n+s}{4}, \frac{2n+2+s}{4}}%
{n + \frac{d+1}{2}}{\bigg( \frac{2}{2+\varepsilon} \bigg)^2} \\
& = \frac{\Gamma( \frac{d+1}{2})}{\Gamma(\frac{s}{2})} (2+\varepsilon)^{-n - \frac{s}{2}} \sum_{k=0}^{\infty} \frac{\Gamma( n + 2k + \frac{s}{2})}{k! \Gamma( n + k + \frac{d+1}{2})} (2 + \varepsilon)^{-2k},
\end{align*}
where $ \sideset{_2}{_1}\HyperF$ is the ordinary hypergeometric function.
\end{lemma}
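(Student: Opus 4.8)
The plan is to compute the Gegenbauer coefficient $\widehat{f_{s,\varepsilon}}(n,d)$ directly from its integral representation \eqref{eq:GegenCoeff}, namely
\[
\widehat{f_{s,\varepsilon}}(n,d) = \frac{\Gamma(\frac{d+1}{2}) n! \,\Gamma(d-1)}{\sqrt{\pi}\,\Gamma(\frac{d}{2})\,\Gamma(n+d-1)} \int_{-1}^{1} f_{s,\varepsilon}(t)\, C_n^{\frac{d-1}{2}}(t)\, (1-t^2)^{\frac{d-2}{2}}\, dt,
\]
with $f_{s,\varepsilon}(t) = (2+\varepsilon-2t)^{-s/2}$ for $s>0$. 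The key idea is to expand the regularized kernel as a convergent binomial (generating-function) series in powers of $t$ and then integrate term by term against the Gegenbauer polynomial, using a known closed-form orthogonality-type integral to collapse each term.

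First I would write $(2+\varepsilon-2t)^{-s/2} = (2+\varepsilon)^{-s/2}\bigl(1 - \frac{2t}{2+\varepsilon}\bigr)^{-s/2}$ and expand the second factor via the binomial series $\sum_{m\ge 0}\binom{-s/2}{m}(-1)^m \bigl(\frac{2}{2+\varepsilon}\bigr)^m t^m$, which converges uniformly for $|t|\le 1$ since $\frac{2}{2+\varepsilon}<1$. Substituting and exchanging sum and integral reduces the problem to evaluating $\int_{-1}^1 t^m C_n^{\lambda}(t)(1-t^2)^{\lambda-1/2}\,dt$ with $\lambda=\frac{d-1}{2}$. This integral vanishes unless $m\ge n$ and $m\equiv n\pmod 2$, so writing $m=n+2k$ selects exactly the surviving terms; the standard formula for $\int_{-1}^1 t^{n+2k} C_n^\lambda(t)(1-t^2)^{\lambda-1/2}\,dt$ (expressible through Gamma functions) then produces the summand. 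After collecting the Gamma-function prefactors and simplifying — using duplication/reflection identities to cancel the $n!$, $\Gamma(d-1)$, and $\sqrt\pi$ factors against the pieces coming from the moment integral — one arrives at the stated second expression
\[
\widehat{f_{s,\varepsilon}}(n,d) = \frac{\Gamma(\frac{d+1}{2})}{\Gamma(\frac{s}{2})}(2+\varepsilon)^{-n-\frac{s}{2}} \sum_{k=0}^{\infty} \frac{\Gamma(n+2k+\frac{s}{2})}{k!\,\Gamma(n+k+\frac{d+1}{2})}(2+\varepsilon)^{-2k}.
\]
Finally, recognizing the ratios $\frac{\Gamma(n+2k+s/2)}{\Gamma(n+s/2)}$ and $\frac{\Gamma(n+\frac{d+1}{2})}{\Gamma(n+k+\frac{d+1}{2})}$ as Pochhammer symbols, and splitting $\Gamma(n+2k+\frac s2)$ via the Gamma duplication formula into two Pochhammer factors with arguments $\frac{2n+s}{4}$ and $\frac{2n+2+s}{4}$, rewrites the series as a ${}_2F_1$ with argument $\bigl(\frac{2}{2+\varepsilon}\bigr)^2$, giving the first displayed form.

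The main obstacle I anticipate is the bookkeeping in the second step: correctly stating and applying the moment integral $\int_{-1}^1 t^{n+2k}C_n^\lambda(t)(1-t^2)^{\lambda-1/2}\,dt$ and then verifying that the dimension-dependent normalization constant combines cleanly with it to leave only $\Gamma(\frac{d+1}{2})/\Gamma(\frac s2)$ out front. This is where sign conventions, the parity constraint $m\equiv n \pmod 2$, and the Gamma duplication formula all have to be handled simultaneously and without error; the convergence justification for interchanging summation and integration is routine given the uniform convergence of the binomial series on $[-1,1]$ when $\varepsilon>0$.
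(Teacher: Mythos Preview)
Your plan is sound and will produce the stated formulas, but it proceeds in the opposite direction from the paper's proof.  The paper starts from the Rodrigues formula \eqref{eq:Rodrigues} for $C_n^{(d-1)/2}$ and integrates by parts $n$ times; this converts the original integral into $\int_{-1}^1 (1+\tfrac{\varepsilon}{2}-t)^{-s/2-n}(1-t^2)^{n+(d-2)/2}\,dt$, which is a standard Euler-type integral evaluating directly to the ${}_2F_1$ in the first displayed expression.  Only afterwards does the paper expand the hypergeometric series and apply Legendre duplication to obtain the second (summation) form.

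Your route --- binomial expansion of $(2+\varepsilon-2t)^{-s/2}$ followed by the moment integrals $\int_{-1}^1 t^{n+2k}C_n^\lambda(t)(1-t^2)^{\lambda-1/2}\,dt$ --- reaches the series form first and then repackages it as ${}_2F_1$.  The paper's approach is a bit cleaner because the Rodrigues step immediately absorbs the $n!$, $\Gamma(d-1)$, and related factors from the normalization, and the single Euler integral replaces your infinite family of moment integrals; in particular it sidesteps exactly the bookkeeping you flag as the main obstacle.  On the other hand, your approach avoids invoking the (somewhat specialized) Euler integral representation of ${}_2F_1$ and keeps everything at the level of elementary series manipulations, which makes the positivity and monotonicity of each summand (used in the subsequent Corollary) completely transparent from the outset.
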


\begin{proof}
On multiple occasions, we will use the identity 
\begin{equation}
\sqrt{\pi} \Gamma(2a) = 2^{2a-1} \Gamma(a) \Gamma \bigg( a + \frac{1}{2} \bigg), \quad a > 0.
\end{equation}
We shall also use the Rodrigues formula
\begin{equation}\label{eq:Rodrigues}
C_n^{\frac{d-1}{2}}(t) = \frac{(-2)^n \Gamma(n + \frac{d-1}{2}) \Gamma(n+d-1)}{n! \Gamma(\frac{d-1}{2}) \Gamma(2n+ d-1)} (1-t^2)^{\frac{2-d}{2}} \bigg( \frac{d}{dt} \bigg)^n ( 1-t^2)^{n+ \frac{d-2}{2}}.
\end{equation}

Using \eqref{eq:Rodrigues} with \eqref{eq:GegenCoeff}, we have, for $0 < s < d$ and $\varepsilon > 0$

\begin{align*}
\widehat{f_{s, \varepsilon}}(n,d) =& \hspace{0.1cm} 2^{-\frac{s}{2}} \frac{\Gamma(\frac{d+1}{2}) n! \Gamma(d-1)}{\sqrt{\pi} \Gamma( \frac{d}{2}) \Gamma(n+d-1)} \frac{(-2)^n \Gamma(n + \frac{d-1}{2}) \Gamma(n+d-1)}{n! \Gamma(\frac{d-1}{2}) \Gamma(2n+ d-1)}\\ &\times \int_{-1}^{1} (1 + \frac{\varepsilon}{2}-t)^{- \frac{s}{2}} \big( \frac{d}{dt} \big)^n ( 1-t^2)^{n+ \frac{d-2}{2}} dt \\
= & \hspace{0.1cm} 2^{n-\frac{s}{2}} \frac{\Gamma(\frac{d+1}{2}) \Gamma(d-1)}{\sqrt{\pi} \Gamma( \frac{d}{2}) } \frac{ \Gamma(n + \frac{d-1}{2}) \Gamma(\frac{s}{2}+n)}{ \Gamma(\frac{d-1}{2}) \Gamma(2n+ d-1) \Gamma(\frac{s}{2})} \\ & \times \int_{-1}^{1} (1 + \frac{\varepsilon}{2}-t)^{- \frac{s}{2}-n } ( 1-t^2)^{n+ \frac{d-2}{2}} dt \\
= & \hspace{0.1cm} 2^{n-\frac{s}{2}} \frac{\Gamma(\frac{d+1}{2}) \Gamma(d-1)}{\sqrt{\pi} \Gamma( \frac{d}{2}) } \frac{ \Gamma(n + \frac{d-1}{2}) \Gamma(\frac{s}{2}+n)}{ \Gamma(\frac{d-1}{2}) \Gamma(2n+ d-1) \Gamma(\frac{s}{2})} \Big( \frac{2}{2+\varepsilon} \Big)^{n + \frac{s}{2}} \sqrt{\pi} \\
& \quad \times \frac{\Gamma( n + \frac{d}{2})}{\Gamma( n + \frac{d+1}{2})} \Hypergeom21%
{ \frac{2n+s}{4}, \frac{2n+2+s}{4}}%
{n + \frac{d+1}{2}}{\bigg( \frac{2}{2+\varepsilon} \bigg)^2} \\
= & \hspace{0.1cm} \frac{\Gamma( \frac{d+1}{2}) \Gamma(\frac{s}{2} +n)}{\Gamma( n + \frac{d+1}{2}) \Gamma( \frac{s}{2})} (2+ \varepsilon)^{-n - \frac{s}{2}} \Hypergeom21%
{ \frac{2n+s}{4}, \frac{2n+2+s}{4}}%
{n + \frac{d+1}{2}}{\bigg( \frac{2}{2+\varepsilon} \bigg)^2},
\end{align*}
giving us the first expression in the claim. For the second one,
\begin{align*}
\widehat{f_{s, \varepsilon}}(n,d) = & \hspace{0.1cm} \frac{\Gamma( \frac{d+1}{2}) \Gamma(\frac{s}{2} +n)}{\Gamma( n + \frac{d+1}{2}) \Gamma( \frac{s}{2})} (2+ \varepsilon)^{-n - \frac{s}{2}} \Hypergeom21%
{ \frac{2n+s}{4}, \frac{2n+2+s}{4}}%
{n + \frac{d+1}{2}}{\bigg( \frac{2}{2+\varepsilon} \bigg)^2}\\
 = & \hspace{0.1cm} \frac{\Gamma( \frac{d+1}{2}) \Gamma(\frac{s}{2} +n)}{\Gamma( n + \frac{d+1}{2}) \Gamma( \frac{s}{2})} (2+ \varepsilon)^{-n - \frac{s}{2}} \\ & \times \sum_{k=0}^{\infty} \frac{\Gamma( \frac{n}{2} + \frac{s}{4} + k) \Gamma( \frac{n+1}{2} + \frac{s}{4} + k) \Gamma(n + \frac{d+1}{2})}{k! \Gamma( \frac{n}{2} + \frac{s}{4}) \Gamma( \frac{n+1}{2} + \frac{s}{4}) \Gamma(n + \frac{d+1}{2} + k)} \bigg( \frac{2}{2+\varepsilon} \bigg)^{2k}\\
 = & \hspace{0.1cm}  2^{n + \frac{s}{2}-1} \frac{\Gamma( \frac{d+1}{2})}{\Gamma( \frac{s}{2}) \sqrt{\pi}} (2+ \varepsilon)^{-n - \frac{s}{2}} \sum_{k=0}^{\infty} \frac{\Gamma( \frac{n}{2} + \frac{s}{4} + k) \Gamma( \frac{n+1}{2} + \frac{s}{4} + k)}{k! \Gamma(n + \frac{d+1}{2} + k)} \bigg( \frac{2}{2+\varepsilon} \bigg)^{2k}\\
= & \hspace{0.1cm} \frac{\Gamma( \frac{d+1}{2})}{\Gamma(\frac{s}{2})} (2+\varepsilon)^{-n - \frac{s}{2}} \sum_{k=0}^{\infty} \frac{\Gamma( n + 2k + \frac{s}{2})}{k! \Gamma( n + k + \frac{d+1}{2})} (2 + \varepsilon)^{-2k}.
\end{align*}
\end{proof}

\begin{corollary}\label{cor:Riesz epsilon Gegen Coeff}
For $0< s $, $n \in \mathbb{N}_0$, the Gegenbauer coefficients $\widehat{f_{s, \varepsilon}}(n,d)$ are positive and decreasing in $n$.
\end{corollary}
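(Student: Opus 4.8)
The plan is to read off both assertions directly from the second (series) representation in Lemma~\ref{lem:Riesz epsilon Gegen Coeff}. Writing $u = 2+\varepsilon$, I would record
$$\widehat{f_{s,\varepsilon}}(n,d) = \frac{\Gamma(\tfrac{d+1}{2})}{\Gamma(\tfrac{s}{2})}\, u^{-n-\frac s2}\, B_n, \qquad B_n = \sum_{k=0}^{\infty} b_{n,k}, \quad b_{n,k} = \frac{\Gamma(n+2k+\frac s2)}{k!\,\Gamma(n+k+\frac{d+1}{2})}\, u^{-2k}.$$
Positivity is then immediate: for $s>0$ every Gamma value appearing has positive argument and every power of $u$ is positive, so each $b_{n,k}>0$, whence $B_n>0$; the prefactor $\Gamma(\tfrac{d+1}{2})/\Gamma(\tfrac s2)$ is positive as well. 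This settles the first half of the claim with no further work.

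For monotonicity I would compare the series $B_n$ and $B_{n+1}$ term by term. A direct application of the Gamma recurrence to $b_{n,k}$ yields the clean ratio
$$\frac{b_{n+1,k}}{b_{n,k}} = \frac{n+2k+\frac s2}{\,n+k+\frac{d+1}{2}\,}, \qquad k \ge 0.$$
Because the positive prefactor is common to $\widehat{f_{s,\varepsilon}}(n,d)$ and $\widehat{f_{s,\varepsilon}}(n+1,d)$, dividing out $u^{-n-s/2}>0$ shows that the inequality $\widehat{f_{s,\varepsilon}}(n,d) > \widehat{f_{s,\varepsilon}}(n+1,d)$ is equivalent to $B_n > u^{-1}B_{n+1}$, that is,
$$B_n - u^{-1}B_{n+1} = \sum_{k=0}^{\infty} b_{n,k}\left(1 - \frac{1}{u}\cdot\frac{n+2k+\frac s2}{\,n+k+\frac{d+1}{2}\,}\right) > 0.$$

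The crux is therefore to show that each bracket is positive, for which it suffices that the ratio above be smaller than $u = 2+\varepsilon$. Here I would use the elementary observation that this ratio is smaller than $2$ exactly when $s < 2(n+d+1)$; the key point, and the one place where care is needed, is that the $k$-dependence cancels in this comparison, since
$$\frac{n+2k+\frac s2}{\,n+k+\frac{d+1}{2}\,} - 2 = \frac{\frac s2 - (n+d+1)}{\,n+k+\frac{d+1}{2}\,}$$
has a sign independent of $k$. In the potential-theoretic range $0<s<d$ we have $s < d < 2(n+d+1)$ for all $n\ge 0$, so the ratio stays below $2 < u$ uniformly in $k$, each bracket exceeds $1 - 2/u > 0$, and $B_n > u^{-1}B_{n+1}$ follows. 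This gives strict monotonicity. The only subtlety I anticipate is precisely this bookkeeping that the $k$-dependence drops out of the comparison with $2$; once that is in hand the termwise estimate is uniform in $k$ and closes the argument at once.
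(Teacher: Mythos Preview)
Your argument is correct and is essentially the same as the paper's: both prove positivity by noting every summand in the series representation from Lemma~\ref{lem:Riesz epsilon Gegen Coeff} is positive, and both prove monotonicity by a term-by-term comparison of the series at $n$ and $n+1$, using the Gamma recurrence to obtain the ratio $(n+2k+\tfrac{s}{2})/(n+k+\tfrac{d+1}{2})$ and checking it stays below $2+\varepsilon$. Your presentation is slightly more explicit in isolating the comparison with $2$ and observing that the sign of the difference is independent of $k$; the paper simply asserts the resulting inequality. Both proofs, despite the corollary being stated for all $s>0$, in fact carry out the verification under the restriction $0<s<d$, which is all that is needed for the applications in \S\ref{sec:upper}.
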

\begin{proof}
For $0 < s < d$ and $n \in \mathbb{N}_0$, we see that all the summands of
\begin{equation}\label{eq:Gegen Epsilon summands}
\sum_{k=0}^{\infty} \frac{\Gamma( n + 2k + \frac{s}{2})}{k! \Gamma( n + k + \frac{d+1}{2})} (2 + \varepsilon)^{-2k-n-\frac{s}{2}}
\end{equation}
are positive, so $\widehat{f_{s, \varepsilon}}(n,d) > 0$. We also see that for $k \in \mathbb{N}_0$,
\begin{equation*}
\frac{\frac{\Gamma( n + 2k + \frac{s}{2})}{ k! \Gamma( n + k + \frac{d+1}{2})} (2+\varepsilon)^{-n - \frac{s}{2}-2k}}{\frac{\Gamma( (n+1) + 2k + \frac{s}{2})}{ k! \Gamma( (n+1) + k + \frac{d+1}{2})} (2+\varepsilon)^{-(n+1) - \frac{s}{2}-2k}} =  \frac{n+k + \frac{d+1}{2}}{n+2k+\frac{s}{2}} (2+\varepsilon) > 1
\end{equation*}
so the summands in \eqref{eq:Gegen Epsilon summands}
are decreasing as a function of $n$. Thus, $ \widehat{f_{s, \varepsilon}}(n,d)$ is indeed a decreasing function in $n$, for $0 < s < d$ and any $\varepsilon > 0$.
\end{proof}

We know consider the Gegenbauer coefficients of the Riesz kernels themselves.
\begin{lemma}\label{lem:Riesz Gegen Coeff}
For $-2< s < d$, $n \in \mathbb{N}$
\begin{equation}
\widehat{f_s}(n,d) = \begin{cases}
\sgn(s) 2^{d-s-1} \frac{\Gamma(\frac{d+1}{2}) \Gamma( \frac{d-s}{2})  }{\sqrt{\pi} \Gamma(\frac{s}{2})} \frac{\Gamma(n+ \frac{s}{2})}{ \Gamma( n+d-\frac{s}{2})}  & s \neq 0 \\
2^{d-2} \frac{\Gamma(\frac{d+1}{2}) \Gamma( \frac{d}{2}) }{\sqrt{\pi} } \frac{\Gamma(n)}{ \Gamma( n+d)} & s = 0
\end{cases}.
\end{equation}
\end{lemma}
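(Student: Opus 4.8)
The plan is to reduce the computation of $\widehat{f_s}(n,d)$ to a single Beta integral by the same mechanism used for $f_{s,\varepsilon}$, namely inserting the Rodrigues formula \eqref{eq:Rodrigues} into the definition \eqref{eq:GegenCoeff} and integrating by parts $n$ times. Writing $f_s(t)=\sgn(s)(2-2t)^{-s/2}$ and $h(t)=(1-t^2)^{n+\frac{d-2}{2}}$, the weight $w_d$ cancels the prefactor $(1-t^2)^{\frac{2-d}{2}}$ coming from \eqref{eq:Rodrigues}, leaving $\int_{-1}^1 (2-2t)^{-s/2}\,h^{(n)}(t)\,dt$. First I would move all $n$ derivatives onto the singular factor via $\big(\frac{d}{dt}\big)^n (2-2t)^{-s/2}=2^n\frac{\Gamma(\frac{s}{2}+n)}{\Gamma(\frac{s}{2})}(2-2t)^{-s/2-n}$ (the sign $(-1)^n$ from the integration by parts being folded into the constants), so that, up to explicit constants, the problem collapses to evaluating $\int_{-1}^1 (2-2t)^{-s/2-n}(1-t^2)^{n+\frac{d-2}{2}}\,dt$.

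The main obstacle — and the reason the paper introduced the regularization $f_{s,\varepsilon}$ in the first place — is justifying this integration by parts when $0<s<d$, since $f_s$ is singular at $t=1$. The boundary terms at each stage are of the form $\big[(2-2t)^{-s/2-j}\,h^{(n-1-j)}(t)\big]_{-1}^{1}$; near $t=1$ one has $h^{(n-1-j)}(t)\sim (1-t)^{\frac d2 + j}$, so the product behaves like $(1-t)^{(d-s)/2}$, which vanishes precisely because $s<d$ (the contribution at $t=-1$ vanishes trivially through the $(1+t)$ factor). This is exactly the hypothesis under which $\mathcal I_{s,d}<\infty$, so no further restriction is needed. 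Alternatively, one can sidestep the singularity entirely by passing to the limit in Lemma \ref{lem:Riesz epsilon Gegen Coeff}: as $\varepsilon\to0$ the argument $(2/(2+\varepsilon))^2\to 1$, and Gauss's summation theorem ${}_2F_1(a,b;c;1)=\frac{\Gamma(c)\Gamma(c-a-b)}{\Gamma(c-a)\Gamma(c-b)}$ applies since here $c-a-b=\frac{d-s}{2}>0$; this route, however, only covers $s>0$, so I would keep the direct argument as the primary one.

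Once the boundary terms are dispatched, I would compute the remaining integral by splitting $2-2t=2(1-t)$ and $1-t^2=(1-t)(1+t)$ and substituting $u=(1+t)/2$, turning it into $2^{\,n+d-\frac s2-1}B\big(n+\tfrac d2,\tfrac{d-s}{2}\big)=2^{\,n+d-\frac s2-1}\frac{\Gamma(n+\frac d2)\Gamma(\frac{d-s}{2})}{\Gamma(n+d-\frac s2)}$. Assembling all constants from \eqref{eq:GegenCoeff} and \eqref{eq:Rodrigues}, the final step is bookkeeping: repeated use of the duplication identity $\sqrt{\pi}\,\Gamma(2a)=2^{2a-1}\Gamma(a)\Gamma(a+\tfrac12)$ collapses the products $\Gamma(n+\tfrac{d-1}{2})\Gamma(n+\tfrac d2)/\Gamma(2n+d-1)$ and $\Gamma(d-1)/\big(\Gamma(\tfrac d2)\Gamma(\tfrac{d-1}{2})\big)$ into pure powers of $2$, all $n$-dependence cancels except for $\Gamma(n+\tfrac s2)/\Gamma(n+d-\tfrac s2)$, and the claimed closed form for $s\ne0$ drops out. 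For $s=0$ the identical scheme works verbatim after replacing the derivative computation by $\big(\frac{d}{dt}\big)^n\log(2-2t)=-(n-1)!\,(1-t)^{-n}$ for $n\ge1$; the Beta integral becomes $B\big(n+\tfrac d2,\tfrac d2\big)$ and the factor $(n-1)!=\Gamma(n)$ produces the stated $\Gamma(n)/\Gamma(n+d)$ formula.
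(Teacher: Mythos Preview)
Your approach is essentially identical to the paper's: insert the Rodrigues formula into \eqref{eq:GegenCoeff}, integrate by parts $n$ times to produce the factor $\Gamma(\frac s2+n)/\Gamma(\frac s2)$ (or $(n-1)!$ when $s=0$), and evaluate the remaining Beta integral $\int_{-1}^1(1-t)^{\frac{d-s-2}{2}}(1+t)^{n+\frac{d-2}{2}}\,dt$, then simplify via duplication. If anything you are more careful than the paper, which carries out the same manipulations without explicitly checking that the boundary terms vanish; your observation that the product behaves like $(1-t)^{(d-s)/2}$ at $t=1$ is exactly the missing justification.
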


\begin{proof}
We again use the Rodrigues formula along with integration by parts to find, for $s \in (-2,0) \cup (0,d)$, and $n \in \mathbb{N}_0$,
\begin{align*}
\widehat{f_s}(n,d) & = \sgn(s) 2^{-\frac{s}{2}} \frac{\Gamma(\frac{d+1}{2}) n! \Gamma(d-1)}{\sqrt{\pi} \Gamma( \frac{d}{2}) \Gamma(n+d-1)} \frac{(-2)^n \Gamma(n + \frac{d-1}{2}) \Gamma(n+d-1)}{n! \Gamma(\frac{d-1}{2}) \Gamma(2n+ d-1)} \\
& \quad \times \int_{-1}^{1} (1-t)^{- \frac{s}{2}} \big( \frac{d}{dt} \big)^n ( 1-t^2)^{n+ \frac{d-2}{2}} dt \\
& = \sgn(s) 2^{n-\frac{s}{2}} \frac{\Gamma(\frac{d+1}{2}) \Gamma(d-1)}{\sqrt{\pi} \Gamma( \frac{d}{2}) } \frac{ \Gamma(n + \frac{d-1}{2}) \Gamma(\frac{s}{2}+n)}{ \Gamma(\frac{d-1}{2}) \Gamma(2n+ d-1) \Gamma(\frac{s}{2})}\\
& \quad \times \int_{-1}^{1} (1-t)^{- \frac{s}{2}-n } ( 1-t^2)^{n+ \frac{d-2}{2}} dt \\
& = \sgn(s) 2^{n-\frac{s}{2}} \frac{\Gamma(\frac{d+1}{2}) \Gamma(d-1)}{\sqrt{\pi} \Gamma( \frac{d}{2}) } \frac{ \Gamma(n + \frac{d-1}{2}) \Gamma(\frac{s}{2}+n)}{ \Gamma(\frac{d-1}{2}) \Gamma(2n+ d-1) \Gamma(\frac{s}{2})}\\
& \quad \times \int_{-1}^{1} (1-t)^{\frac{d-s-2}{2} } ( 1+t)^{n+ \frac{d-2}{2}} dt \\
& = \sgn(s) 2^{n-\frac{s}{2}} \frac{\Gamma(\frac{d+1}{2}) \Gamma(d-1)}{\sqrt{\pi} \Gamma( \frac{d}{2}) } \frac{ \Gamma(n + \frac{d-1}{2}) \Gamma(\frac{s}{2}+n)}{ \Gamma(\frac{d-1}{2}) \Gamma(2n+ d-1) \Gamma(\frac{s}{2})} \\
& \quad \times\frac{2^{n+d-\frac{s}{2}-1} \Gamma( \frac{d-s}{2}) \Gamma( n + \frac{d}{2})}{(n+d-\frac{s}{2}-1) \Gamma( n+d-\frac{s}{2}-1)}\\
& = \sgn(s) 2^{d-s-1} \frac{\Gamma(\frac{d+1}{2}) }{\sqrt{\pi} } \frac{  \Gamma(\frac{s}{2}+n)}{ \Gamma(\frac{s}{2})} \frac{\Gamma( \frac{d-s}{2}) }{ \Gamma( n+d-\frac{s}{2})}.
\end{align*}
For $s=0$ and $n \geq 1$, we have a similar result:
\begin{align*}
\widehat{f_s}(n,d) & = - \frac{1}{2} \frac{\Gamma(\frac{d+1}{2}) n! \Gamma(d-1)}{\sqrt{\pi} \Gamma( \frac{d}{2}) \Gamma(n+d-1)} \frac{(-2)^n \Gamma(n + \frac{d-1}{2}) \Gamma(n+d-1)}{n! \Gamma(\frac{d-1}{2}) \Gamma(2n+ d-1)} \\
& \quad \times\int_{-1}^{1} \log(1-t) \big( \frac{d}{dt} \big)^n ( 1-t^2)^{n+ \frac{d-2}{2}} dt \\
& = 2^{n-1} \frac{\Gamma(\frac{d+1}{2})  \Gamma(d-1)}{\sqrt{\pi} \Gamma( \frac{d}{2}) } \frac{ \Gamma(n + \frac{d-1}{2}) (n-1)! }{ \Gamma(\frac{d-1}{2}) \Gamma(2n+ d-1)} \\
& \quad \times\int_{-1}^{1} (1-t)^{1-n} ( 1-t^2)^{n+ \frac{d-2}{2}} dt \\
& = 2^{n-1} \frac{\Gamma(\frac{d+1}{2})  \Gamma(d-1)}{\sqrt{\pi} \Gamma( \frac{d}{2}) } \frac{ \Gamma(n + \frac{d-1}{2}) (n-1)! }{ \Gamma(\frac{d-1}{2}) \Gamma(2n+ d-1)} \\
& \quad \times\int_{-1}^{1} (1-t)^{\frac{d}{2}} ( 1+t)^{n+ \frac{d-2}{2}} dt \\
& = 2^{n-1} \frac{\Gamma(\frac{d+1}{2}) \Gamma(d-1)}{\sqrt{\pi} \Gamma( \frac{d}{2}) } \frac{ \Gamma(n + \frac{d-1}{2}) (n-1)!}{ \Gamma(\frac{d-1}{2}) \Gamma(2n+ d-1) } \frac{2^{n+d-1} \Gamma( \frac{d}{2}) \Gamma( n + \frac{d}{2})}{\Gamma( n+d)}\\
& = 2^{d-2} \frac{\Gamma(\frac{d+1}{2}) \Gamma( \frac{d}{2}) }{\sqrt{\pi} } \frac{\Gamma(n)}{ \Gamma( n+d)}.
\end{align*}
\end{proof}

The coefficients for $n \in \mathbb{N}$ are thus clearly positive, and a comparison of consecutive coefficients in all cases also shows that $\widehat{f_s}(n,d)$ is also decreasing as a function of $n$. A quick asymptotic analysis give us the following:
\begin{corollary}\label{cor:Riesz Gegen Coeff}
For $-2< s < d$, $n \in \mathbb{N}$, $\widehat{f_s}(n,d)$ is positive and decreasing as a function of $n$. Moreover
\begin{equation}\label{eq:Riesz Gegen Coeff Asymptotics}
\widehat{f_s}(n,d) = \begin{cases}
\sgn(s) 2^{d-s-1} \frac{\Gamma(\frac{d+1}{2}) \Gamma( \frac{d-s}{2})}{\sqrt{\pi} \Gamma(\frac{s}{2})} n^{s-d} + \mathrm{O}(n^{s-d-1}) & s \neq 0 \\
2^{d-2} \frac{\Gamma(\frac{d+1}{2}) \Gamma( \frac{d}{2}) }{\sqrt{\pi} } n^{-d} + \mathrm{O}(n^{-d-1}) & s = 0
\end{cases}.
\end{equation}
\end{corollary}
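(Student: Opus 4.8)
The plan is to read off everything directly from the closed forms for $\widehat{f_s}(n,d)$ established in Lemma \ref{lem:Riesz Gegen Coeff}, so that positivity, monotonicity, and the asymptotics all reduce to elementary facts about ratios of Gamma functions. Throughout, the only $n$-dependent factor is
$$ R_s(n) = \frac{\Gamma(n+\frac{s}{2})}{\Gamma(n+d-\frac{s}{2})} \quad (s \neq 0), \qquad R_0(n) = \frac{\Gamma(n)}{\Gamma(n+d)}, $$
and everything multiplying it is a constant depending only on $s,d$.

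First I would establish positivity. Since $n \in \mathbb{N}$ and $-2 < s < d$, both arguments $n+\frac{s}{2} > 0$ and $n+d-\frac{s}{2} > 0$ (the latter because $d-\frac{s}{2} > \frac{d}{2} > 0$), so $R_s(n) > 0$, and likewise $R_0(n) > 0$. It then remains to check the sign of the constant prefactor. For $s > 0$ this is immediate since $\sgn(s)=1$ and $\Gamma(\frac{s}{2}), \Gamma(\frac{d-s}{2}) > 0$. The one point requiring care is the range $-2 < s < 0$: here $\sgn(s) = -1$, and $\frac{s}{2} \in (-1,0)$ forces $\Gamma(\frac{s}{2}) < 0$, so the quotient $\sgn(s)/\Gamma(\frac{s}{2})$ is a ratio of two negative numbers and hence positive, while $\Gamma(\frac{d-s}{2}) > 0$. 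Thus the prefactor is positive in every case, and $\widehat{f_s}(n,d) > 0$.

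Next I would prove monotonicity by computing the ratio of consecutive coefficients. Using $\Gamma(x+1) = x\Gamma(x)$, for $s \neq 0$,
$$ \frac{\widehat{f_s}(n+1,d)}{\widehat{f_s}(n,d)} = \frac{R_s(n+1)}{R_s(n)} = \frac{n+\frac{s}{2}}{n+d-\frac{s}{2}}, $$
and similarly $R_0(n+1)/R_0(n) = n/(n+d)$. Since $s < d$ gives $n+d-\frac{s}{2} > n+\frac{s}{2} > 0$, this ratio lies in $(0,1)$, so $\widehat{f_s}(n,d)$ is strictly decreasing in $n$ (the case $s=0$ is identical). Finally, for the asymptotics I would invoke the standard ratio estimate $\Gamma(n+a)/\Gamma(n+b) = n^{a-b}\bigl(1 + O(n^{-1})\bigr)$ with $a = \frac{s}{2}$, $b = d-\frac{s}{2}$, so that $a-b = s-d$ and $R_s(n) = n^{s-d}\bigl(1+O(n^{-1})\bigr)$; multiplying by the constant prefactor yields the claimed leading term with error $O(n^{s-d-1})$, and the case $s=0$ follows from $R_0(n) = n^{-d}\bigl(1+O(n^{-1})\bigr)$.

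There is no serious obstacle here: the entire corollary is a consequence of the explicit Lemma \ref{lem:Riesz Gegen Coeff}. The only place demanding attention is the sign bookkeeping in the positivity argument for $-2 < s < 0$, where one must remember that $\Gamma$ is negative on $(-1,0)$ so that $\sgn(s)$ and $\Gamma(\frac{s}{2})$ contribute matching signs; the monotonicity and asymptotic steps are then routine Gamma-function manipulations.
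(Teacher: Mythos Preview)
Your proposal is correct and follows essentially the same approach as the paper: the paper simply remarks that positivity is ``clear,'' that monotonicity follows from ``a comparison of consecutive coefficients,'' and that the asymptotics come from ``a quick asymptotic analysis,'' all based on the closed forms in Lemma~\ref{lem:Riesz Gegen Coeff}. You have filled in precisely these details, including the sign bookkeeping for $-2<s<0$ that the paper leaves implicit.
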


We note that in the case that $d=1$, we actually have the Chebyshev polynomials of the first type instead of the Gegenbauer polynomials. These are given by $T_0(t) = 1$ and, for $n \in \mathbb{N}$,
\begin{equation}
\lim_{d \rightarrow 1^+}  \frac{2n+d-1}{2(d-1)} C_n^{\frac{d-1}{2}}(t) = T_n(t) = \cos( n \arccos(t)).
\end{equation}
Through this limit, one can quickly find that Lemmas \ref{lem:Riesz epsilon Gegen Coeff} and \ref{lem:Riesz Gegen Coeff} as well as Corollaries \ref{cor:Riesz Gegen Coeff} and \ref{cor:Riesz epsilon Gegen Coeff} still hold in this case.

\subsection{Limits for polarization}

We define, for any symmetric, lower semi-continuous kernel $K: \mathbb{S}^d \times \mathbb{S}^d \rightarrow(- \infty, \infty]$ and finite Borel measure $\mu$, the potential
\begin{equation}\label{eq:Potential Def}
U_{K}^{\mu}(x) = \int_{\mathbb{S}^d} K(x,y) d \mu(y).
\end{equation}
We define the polarization of $\mu$ to be
\begin{equation}
P_K(\mu) = \inf_{x \in \mathbb{S}^d} U_K^{\mu}(x).
\end{equation}

\begin{lemma}\label{lem:Limit for Polarization}
Let $K: \mathbb{S}^d \times \mathbb{S}^d \rightarrow(- \infty, \infty]$ be a symmetric, lower semi-continuous kernel. Let $\mu$ be a finite Borel measure on $\Omega$ and $(K_m)_{m=1}^{\infty}$ be a sequence of symmetric, continuous kernels, increasing pointwise in $m$ to $K$. Then
\begin{equation}
P_K(\mu) = \lim_{m \rightarrow \infty} P_{K_m}(\mu).
\end{equation}
\end{lemma}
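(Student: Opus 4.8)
The plan is to split the statement into the two inequalities $\lim_{m\to\infty} P_{K_m}(\mu) \le P_K(\mu)$ and $\lim_{m\to\infty} P_{K_m}(\mu) \ge P_K(\mu)$, the first being essentially immediate from monotonicity and the second requiring a compactness argument on $\mathbb{S}^d$. Throughout I would write $g_m = U_{K_m}^{\mu}$ and $g = U_K^{\mu}$ for the potentials. First I would record two preliminary facts. Since each $K_m$ is continuous on the compact product space $\mathbb{S}^d\times\mathbb{S}^d$, it is bounded and uniformly continuous, so each $g_m$ is a finite, bounded, continuous function on $\mathbb{S}^d$ (uniform continuity of $K_m$ together with finiteness of $\mu$ controls $|g_m(x)-g_m(x')|$), and hence $g_m$ attains its infimum $P_{K_m}(\mu)$ at some point $x_m\in\mathbb{S}^d$. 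Second, setting $C = \sup_{\mathbb{S}^d\times\mathbb{S}^d}|K_1|$, the bound $K_m\ge K_1\ge -C$ together with $K_m\uparrow K$ pointwise and $\mu$ finite lets me apply the monotone convergence theorem (to $K_m-K_1\ge 0$) to conclude $g_m(x)\uparrow g(x)$ for every fixed $x$.

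The easy inequality is then immediate: since $K_m\le K$ we have $g_m\le g$ pointwise, so $P_{K_m}(\mu)=\inf_x g_m(x)\le \inf_x g(x)=P_K(\mu)$, and since $g_m$ is increasing in $m$ the numbers $P_{K_m}(\mu)$ form a nondecreasing sequence bounded above by $P_K(\mu)$. Hence they converge to some limit $L\le P_K(\mu)$, and it remains to prove $L\ge P_K(\mu)$.

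For the reverse inequality I would use compactness of $\mathbb{S}^d$ to pass to a subsequence with $x_{m_k}\to x^{*}$. Fixing an arbitrary index $j$, for all $k$ large enough that $m_k\ge j$ monotonicity gives $g_j(x_{m_k})\le g_{m_k}(x_{m_k})=P_{K_{m_k}}(\mu)$; letting $k\to\infty$ and using continuity of $g_j$ on the left and $P_{K_{m_k}}(\mu)\to L$ on the right yields $g_j(x^{*})\le L$. Now letting $j\to\infty$ and invoking the pointwise monotone convergence $g_j(x^{*})\uparrow g(x^{*})$ from the preliminaries gives $g(x^{*})\le L$. Since trivially $g(x^{*})\ge \inf_x g(x)=P_K(\mu)$, combining with $L\le P_K(\mu)$ forces $P_K(\mu)\le g(x^{*})\le L\le P_K(\mu)$, whence $L=P_K(\mu)$. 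This argument is insensitive to whether $P_K(\mu)$ is finite or $+\infty$.

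The main obstacle is exactly the lower bound $L\ge P_K(\mu)$: pointwise convergence $g_m\to g$ does not by itself transfer to convergence of the infima, and the minimizers $x_m$ need not converge as a full sequence. The device that resolves this is to combine the compactness of the sphere (to extract a limit point $x^{*}$ of the minimizers) with the monotonicity of the family $\{g_m\}$, which permits evaluating a fixed low-index potential $g_j$ along the tail of the minimizing subsequence and only afterwards sending $j\to\infty$. The delicate point is the order of these two limits; the hypothesis that the $K_m$ increase pointwise is precisely what validates this exchange and cannot be dropped.
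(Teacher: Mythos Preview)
Your proof is correct and follows essentially the same approach as the paper: both arguments establish the easy inequality from monotonicity, extract a convergent subsequence $x_{m_k}\to x^{*}$ of minimizers by compactness of $\mathbb{S}^d$, and then exploit the key device of fixing a low index $j$, using continuity of $g_j$ to pass to the limit $k\to\infty$, and only afterwards sending $j\to\infty$ via monotone convergence. Your write-up is in fact slightly more careful in the preliminaries (explicitly justifying continuity of $g_m$ and the applicability of the monotone convergence theorem via the lower bound $K_1\ge -C$), and you avoid invoking an explicit minimizer $x_\infty$ of $U_K^\mu$, which makes the treatment of the case $P_K(\mu)=+\infty$ cleaner.
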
 

\begin{proof}
By the Monotone Convergence Theorem, we have that for all $x \in \mathbb{S}^d$,
\begin{equation*}
\lim_{m \rightarrow \infty} U_{K_m}^{\mu}(x) = U_K^{\mu}(x).
\end{equation*}

For $m \in \mathbb{N}$, let $x_m = \argmin U_{K_m}^{\mu}(x) $ and let $x_{\infty} = \argmin U_K^{\mu}(x)$. Thus
\begin{equation}\label{eq:lower bound}
\lim_{m \rightarrow \infty} U_{K_m}^{\mu}(x_m) \leq \lim_{m \rightarrow \infty} U_{K_m}^{\mu}(x_{\infty}) = U_K^{\mu}(x_{\infty}).
\end{equation}

For all $m \in \mathbb{N}$, we see that
\begin{equation*}
U_{K_m}^{\mu}(x_m)  \leq U_{K_m}^{\mu}(x_{m+1}) \leq  U_{K_{m+1}}^{\mu}(x_{m+1})    \leq  U_{K_{m+1}}^{\mu}(x_{\infty}) \leq U_{K}^{\mu}(x_{\infty}),
\end{equation*}
so $U_{K_m}^{\mu}(x_m)$ is an increasing sequence, bounded from above by $U_{K}^{\mu}(x_{\infty})$.

Since $\mathbb{S}^d$ is compact, there is a convergent subsequence $x_{m_k}$ of $x_m$, with limit point $x^* \in \mathbb{S}^d$. Now, for each $j, k, l \in \mathbb{N}$ such that $j \leq k \leq l$ we know
\begin{equation*}
U_{K_{m_j}}^{\mu}(x_{m_{k}}) \leq U_{K_{m_k}}^{\mu}(x_{m_{k}}) \leq U_{K_{m_k}}^{\mu}(x_{m_{l}}) \leq U_{K_{m_{l}}}^{\mu}(x_{m_{l}})
\end{equation*} 
so for all $j\leq k$
\begin{equation*}
U_{K_{m_j}}^{\mu}(x_{m_{j}}) \leq U_{K_{m_j}}^{\mu}(x_{m_{k}}) \leq \lim_{l \rightarrow \infty} U_{K_{m_l}}^{\mu}(x_{m_l}) = \lim_{m \rightarrow \infty}  U_{K_{m}}^{\mu}(x_{m}).
\end{equation*}
Thus, by continuity, for $j \in \mathbb{N}$,
\begin{equation*}
U_{K_{m_j}}^{\mu}( x^*) = \lim_{k \rightarrow \infty} U_{K_{m_j}}^{\mu}(x_{m_{k}})  \leq \lim_{m \rightarrow \infty}  U_{K_{m}}^{\mu}(x_{m}).
\end{equation*}
Thus, by the Monotone Convergence Theorem, we have 
\begin{equation}\label{eq:upper bound}
U_K^{\mu}(x^*) =\lim_{j \rightarrow  \infty} U_{K_{m_j}}(x^*)  \leq  \lim_{m \rightarrow \infty}  U_{K_{m}}^{\mu}(x_{m}).
\end{equation}

Our claim now follows from \eqref{eq:lower bound}, \eqref{eq:upper bound}, and the fact that $ U_K^{\mu}(x_{\infty}) \leq U_K^{\mu}(x^*)$.
\end{proof}

Taking $\mu = \sum_{j=1}^{N} \delta_{x_j}$ gives a discrete version of the result. We are unaware of a proof like this for any other polarization problems, and would like to point out that it should hold on arbitrary compact metric measure spaces as well.

\section{Acknowledgements}

D. Bilyk and M. Mastrianni have been supported by the NSF grant DMS-2054606. R.W. Matzke was supported by the Austrian Science Fund FWF project F5503 part of the Special Research Program (SFB) ``Quasi-Monte Carlo Methods: Theory and Applications" and NSF Postdoctoral Fellowship Grant 2202877. S. Steinerberger is supported by the NSF (DMS-2123224) and the Alfred P. Sloan Foundation.

\end{document}